\documentclass[10pt,a4paper]{article}
\usepackage[english]{babel}
\usepackage[utf8]{inputenc}
\usepackage[T1]{fontenc}
\usepackage{amsmath}
\usepackage{amsfonts}
\usepackage{amssymb}
\usepackage{amsthm}
\usepackage{mathtools}
\usepackage{parskip} 
\usepackage[left=4.5cm,right=4.5cm,top=4.5cm,bottom=5.7cm]{geometry}
\usepackage{color}
\usepackage{bibgerm,cite}

\swapnumbers
\numberwithin{equation}{section}

\theoremstyle{definition}
\newtheorem{definition}{Definition}[section]
\newtheorem{example}[definition]{Example}
\newtheorem{remark}[definition]{Remark}
\newtheorem{point}[definition]{}
\theoremstyle{plain}
\newtheorem{theorem}[definition]{Theorem}
\newtheorem{proposition}[definition]{Proposition}
\newtheorem{lemma}[definition]{Lemma}
\newtheorem{corollary}[definition]{Corollary}

\newcommand{\N}{\mathbb{N}}
\newcommand{\R}{\mathbb{R}}
\newcommand{\eps}{\varepsilon}

\begin{document}

\begin{center}
\section*{Diffeomorphism groups of compact locally polyhedral manifolds}
\large{Johanna Jakob}
\end{center}
\vspace{0.5cm}
\begin{abstract}
\noindent 
A locally polyhedral manifold $M$ is a manifold which is locally diffeomorph to open subsets of convex polytopes. Manifolds with corners are a special case of local\-ly po\-ly\-hedral manifolds. We turn the group $\text{Diff}^{\infty}(M)$ of all smooth diffeo\-mor\-phisms $f\colon M\to M$ for compact smooth $M$ into a regular Lie group.
\end{abstract}

\section{Introduction and statement of results}

Manifolds of mappings are an essential concept in global analysis (cf. \cite{Eells,Kriegl}). 
It is well-known that the set $C^k(M,N)$ of all $C^k$-maps $f\colon M\to N$ can be turned into a smooth manifold for all $k\in\N_0\cup\{\infty\}$ and $\sigma$-compact, finite-dimensional smooth manifolds $M$ and $N$, where $M$ (but not $N$) may have a boundary or corners 
(cf. [6,\ 16,\ 15,\ 3,\ 18,\ 9]).
This is of particular interest for compact $M$ without boundary, because then the group $\text{Diff}^{\infty}(M)$ of all $C^{\infty}$-diffeomorphisms $f\colon M\to M$ is an open subset of $C^{\infty}(M,M)$ and a regular Lie group (see \cite{Kriegl}).
Michor \cite{Michor} was the first to discuss a smooth Lie group structure on $\text{Diff}^{\infty}(M)$ for a smooth manifold $M$ with corners, but Glöckner \cite{G} recently noticed a flaw in his construction.
However, $\text{Diff}^{\infty}(M)$ can be turned into a regular Lie group if $M$ is a compact smooth manifold with boundary (see \cite{Grong}) or a convex polytope (see \cite{G}), but for a general smooth manifold $M$ with corners, there is no valid Lie group structure on $\text{Diff}^{\infty}(M)$ available so far. This article provides a new, correct construction of a Lie group structure on $\text{Diff}^{\infty}(M)$ for compact $M$ and takes the step from manifolds with corners to the more natural, broader class of locally polyhedral manifolds.

\begin{definition}\label{cc}
Let $n\in\N_0$ and $E$ be an $n$-dimensional real vector space. Following \cite{G}, a \emph{locally polyhedral $C^{\infty}$-manifold of dimension $n$} is a Hausdorff topological space $M$, together with a set $\mathcal{A}$ of homeomorphisms $\varphi\colon U_{\varphi}\to V_{\varphi}$ from open subsets $U_{\varphi}\subseteq M$ onto open subsets of a convex polytope $P_{\varphi}\subseteq E$ with non-empty interior, such that $\mathcal{A}$ is a \emph{polyhedral smooth atlas} in the sense that $\bigcup_{\varphi\in\mathcal{A}}U_{\varphi}=M$ and $\varphi\circ\psi^{-1}$ is $C^{\infty}$ for all $\varphi,\psi\in\mathcal{A}$, and $\mathcal{A}$ is maximal among such atlases. Notably, $M$ is a $C^{\infty}$-manifold with rough boundary in the sense of \cite{GN}.
\end{definition}

Of course, each open subset $V\subseteq P$ of a convex polytope $P\subseteq E$ is a locally polyhedral $C^{\infty}$-manifold. Further examples are traditional $C^{\infty}$-manifolds with boundary or corners, since they have an atlas of charts with open images in a cube $[0,1]^n$.

For $k\in\N_0\cup\{\infty\}$, the set 
\[\text{Diff}^k(M)\coloneqq\{f\in C^k(M,M)\colon (\exists g\in C^k(M,M))\colon f\circ g=g\circ f=\text{id}_M\} \]
of all $C^k$-diffeomorphisms of $M$ is a group, where the composition of diffeomorphisms is the group multiplication and the identity map $\text{id}_M$ the neutral element. We let denote $\mathcal{V}^k(M)$ the space of all $C^k$-vector fields $X\colon M\to TM$ on $M$ and $\mathcal{V}_{\text{str}}^k(M)\subseteq \mathcal{V}^k(M)$ the subspace of all stratified $C^k$-vector fields (see Definition \ref{dd}). Our main result is the following:




\begin{theorem}\label{a}
Let $M$ be a compact locally polyhedral $C^{\infty}$-manifold. Then there is a unique smooth manifold structure on $\text{\emph{Diff}}^{\infty}(M)$ modeled on $\mathcal{V}^{\infty}_{\text{\emph{str}}}(M)$ which satisfies the following exponential law for all $l\in\N_0\cup\{\infty\}$: For each $C^l$-manifold $L$, possibly with rough boundary, a map $g\colon L\to \text{\emph{Diff}}^{\infty}(M)$ is $C^l$ if and only if
\[g^{\wedge}\colon L\times M\to M,\quad g^{\wedge}(x,y)\coloneqq g(x)(y)\]
is a $C^{l,\infty}$-map in the sense of \emph{\cite{AS}}. This manifold structure turns $\text{\emph{Diff}}^{\infty}(M)$ into a Lie group, which is $L^1$-regular in the sense of \emph{\cite{G15}} (and thus regular in the sense of \emph{\cite{Milnor}}).
\end{theorem}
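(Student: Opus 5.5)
The plan is to build a chart around $\mathrm{id}_M$ from a local addition adapted to the stratification of $M$. Translates of this chart will give an atlas, and the exponential law will be used to get smoothness of the group operations and regularity.

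\textbf{Step 1: a stratified local addition.} I would first fix a Riemannian metric on $M$; it exists by a partition of unity, since $M$ is a rough-boundary manifold. The aim is a smooth map $\Sigma\colon \Omega\to M$ on an open neighbourhood $\Omega$ of the zero section in the stratified tangent bundle with the following properties:
\begin{itemize}
\item[(i)] $\Sigma(0_x)=x$ and $T_{0_x}\Sigma|_{T_xM}=\mathrm{id}$;
\item[(ii)] for each $v\in\Omega\cap T_xM$ tangent to the face through $x$, the point $\Sigma(v)$ lies in the closure of that face;
\item[(iii)] $(\pi,\Sigma)\colon\Omega\to M\times M$ is a diffeomorphism onto an open neighbourhood of the diagonal, within the appropriate stratified sense.
\end{itemize}
Because the exponential map of a metric does not respect faces, I would construct $\Sigma$ chart by chart. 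In a polytope chart the naive map $x+v$ preserves faces when $v$ is tangent to the face of $x$. I would glue these local additions with a partition of unity, combining them convexly in charts (convexity of polytopes keeps the result inside $P$). I expect this gluing, and in particular keeping face-preservation together with the local diffeomorphism property near points where several faces meet, to be the \textbf{main obstacle}. This is exactly where Michor's construction failed. If the gluing breaks (iii), the fallback is weaker: use Glöckner's polytope result \cite{G} locally, and only require that $\Sigma$ be a $C^\infty$-map with a stratified inverse on $\{(x,y)\}$ near the diagonal.

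\textbf{Step 2: charts.} Let $\mathcal{U}$ be the set of $X\in\mathcal{V}^\infty_{\mathrm{str}}(M)$ with $X(M)\subseteq\Omega$ and $\Sigma\circ X\in\mathrm{Diff}^\infty(M)$. I would show $\mathcal{U}$ is open; this uses compactness of $M$ and the fact that stratified maps $C^1$-close to $\mathrm{id}$ are diffeomorphisms. On $\mathcal{U}$ define $\phi(X)=\Sigma\circ X$, with inverse $f\mapsto(\pi,\Sigma)^{-1}\circ(\mathrm{id},f)$. That this inverse is a stratified vector field uses the fact that a diffeomorphism of $M$ maps each face onto a face, and near $\mathrm{id}$ maps faces to themselves. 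For $g\in\mathrm{Diff}^\infty(M)$ I would take charts $f\mapsto\phi^{-1}(f\circ g^{-1})$. Their transition maps have the form $X\mapsto(\pi,\Sigma)^{-1}\circ(\mathrm{id},\Sigma\circ X\circ h)$. Smoothness of these maps follows from the exponential law for $C^{l,\infty}$-maps \cite{AS} together with smoothness of pushforward by $\Sigma$ on $C^\infty(M,\cdot)$. Since $M$ is compact, $\mathcal{V}^\infty_{\mathrm{str}}(M)$ is a Fréchet space, so the exponential law is available.

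\textbf{Step 3: exponential law, uniqueness, Lie group, regularity.} Using the chart, $g\colon L\to\mathrm{Diff}^\infty(M)$ is $C^l$ if and only if $\phi^{-1}\circ g$ is $C^l$ into $\mathcal{V}^\infty_{\mathrm{str}}(M)$. By \cite{AS}, this holds if and only if $(x,y)\mapsto(\pi,\Sigma)^{-1}(y,g(x)(y))$ is $C^{l,\infty}$, which is equivalent to $g^\wedge$ being $C^{l,\infty}$. Uniqueness is formal: the identity map between two structures with the exponential law is $C^\infty$ in both directions, since its wedge is the evaluation map. For the group operations:
\begin{itemize}
\item Composition is smooth because $(f,g)^\wedge\mapsto f^\wedge(x,g^\wedge(x,y))$ is $C^{\infty,\infty}$ by the chain rule for $C^{l,\infty}$-maps.
\item Inversion is smooth because $(x,y)\mapsto g(x)^{-1}(y)$ is $C^{\infty,\infty}$. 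This follows from a parameter-dependent inverse function theorem applied to $(x,y)\mapsto(x,g^\wedge(x,y))$.
\end{itemize}
For $L^1$-regularity, take $\gamma\in L^1([0,1],\mathcal{V}^\infty_{\mathrm{str}}(M))$. I would solve the time-dependent ODE $\dot\eta_t=\gamma(t)\circ\eta_t$ on $M$; the flow stays in $M$ because the field is stratified, so $M$ is flow-invariant, which is checked in polytope charts. Next I would show the solution is $C^{0,\infty}$ via Carathéodory ODE theory with parameters. Finally, smooth dependence on $\gamma$ follows from the exponential law applied to parameter-dependent ODEs, as in \cite{G15}.
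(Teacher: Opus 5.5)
\textbf{There is a genuine gap: Step 1 is never achieved, and the gluing you propose cannot produce it.} Your overall architecture does match the paper's: a local addition compatible with the strata, a chart at $\text{id}_M$, right translates, the exponential law, formal uniqueness, and composition via the chain rule.

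The problem is that local additions carry no affine structure. The expression $\sum_j h_j(x)\Sigma_j(v)$ is meaningless on $M$. Averaging the points $\varphi_j^{-1}(\varphi_j(x)+d\varphi_j(v))$ inside some chart gives a result that depends on that chart, so no global $\Sigma$ emerges.

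The domain is also a problem. A $\Sigma$ with values in $M$ and $T_{0_x}\Sigma=\text{id}$ cannot be defined on an open neighbourhood of the zero section of $TM$, because outward vectors at boundary points have nowhere to go. If you keep only stratified vectors, then by (ii) the image of $(\pi,\Sigma)$ contains only pairs $(x,y)$ with $y$ in the closed face of $x$. That set is not a neighbourhood of the diagonal in $M\times M$. So your chart inverse $f\mapsto(\pi,\Sigma)^{-1}\circ(\text{id},f)$ and the smoothness of your transition maps have no footing.

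The paper avoids this with two ideas absent from your plan:
\begin{itemize}
\item It embeds $M$ as a full locally polyhedral submanifold of a boundaryless $\widetilde{M}$ (Proposition \ref{b}, via the time-$\tau$ flow of a strictly inner vector field). One then works inside the known manifold $C^\infty(M,\widetilde{M})$, whose chart at $\text{id}_M$ is modeled on all of $\mathcal{V}^\infty(M)$ with a genuinely open domain $D\subseteq T\widetilde{M}$, and shows that $\text{Diff}^\infty(M)$ is a submanifold.
\item It glues \emph{sprays}, which do form an affine space, to obtain a spray with $X(T\partial_iM)\subseteq T(T\partial_iM)$ (Lemma \ref{w}). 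The exponential map of this spray serves as $\Sigma$.
\end{itemize}

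Even granting such a $\Sigma$, Step 2 asserts rather than proves both inclusions of $\Psi(Q\cap\mathcal{V}^\infty_{\text{str}}(M))=\Psi(Q)\cap\text{Diff}^\infty(M)$.
\begin{itemize}
\item The claim ``stratified maps $C^1$-close to $\text{id}$ are diffeomorphisms'' is available only on polytopes (Gl\"ockner's Proposition \ref{n}). The paper first proves the polytope version of the chart statement (Proposition \ref{m}). It then transfers it to $M$ using the cut-off operator $\Xi$, localized sprays $X_j$ that agree with $X_{\varphi_j}$ on $S_j$, surjectivity from the inverses of the local polytope diffeomorphisms, and injectivity from a compactness argument.
\item For the converse, your property (ii) only goes one way. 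To conclude that $\theta^{-1}(x,f(x))$ is tangent to the stratum of $x$ for a face-preserving $f$, you need $\Sigma$, restricted to the tangent bundle of each face, to be a local addition of that face. Then injectivity of $\theta$ forces the vector to be tangent. The paper obtains exactly this by induction on $\dim E$, through the identity $\Sigma(z+x_F,v)=\Sigma_F(z,v)+x_F$.
\end{itemize}

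Two further steps are underpowered. Your inversion step needs a parameter-dependent inverse lemma with Fr\'echet parameters at corners. The paper instead uses \cite[Lemma 4.2]{G} on polytopes and transfers it to $M$ via \eqref{gl.x}. Your direct $L^1$-ODE route to regularity leaves unproved the smooth dependence of Carath\'eodory flows on $L^1$ data in $\mathcal{V}^\infty_{\text{str}}(M)$. The paper avoids this by building the Banach manifolds $\text{Diff}^k(M)$ with compatible charts $\Theta_k$ and applying the Gl\"ockner--Suri criterion for strong ILB-type groups.
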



This theorem is subsumed by Proposition \ref{gg} below, which contains further information. To prepare the construction of a manifold structure on $\text{Diff}^{\infty}(M)$, we need an embedding of $M$ into a manifold without boundary.


\begin{proposition}\label{b}
For every compact locally polyhedral $C^{\infty}$-manifold $M$, there exists a $\sigma$-compact finite-dimensional $C^{\infty}$-manifold $\widetilde{M}$ without boundary such that $M$ is a locally polyhedral full submanifold of $\widetilde{M}$ and the inclusion map $\text{\emph{id}}_M\colon M\hookrightarrow \widetilde{M}$ is a $C^{\infty}$-diffeomorphism onto its image.
\end{proposition}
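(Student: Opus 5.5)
We would construct $\widetilde{M}$ by gluing local extensions of the charts, in the spirit of the classical construction of a collar or double for manifolds with boundary. The main difficulty is that the extensions must be chosen compatibly on overlaps.

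\textbf{Step 1: local extension of transition maps.} For each chart $\varphi\colon U_\varphi\to V_\varphi\subseteq P_\varphi$, the polytope $P_\varphi$ has non-empty interior, so $V_\varphi$ is a locally convex subset with dense interior. Smoothness of a map on $V_\varphi$ is understood via continuous extension of derivatives from the interior. A Whitney-type or Seeley-type extension theorem for convex sets with dense interior (the one available for the setting of \cite{GN}) then shows that every $C^\infty$-map on an open subset of $P_\varphi$ extends locally to a $C^\infty$-map on an open subset of $E$.

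\textbf{Step 2: a Riemannian/collar-free approach via embedding.} Compactness of $M$ gives finitely many charts $\varphi_1,\dots,\varphi_m$ and a subordinate smooth partition of unity $(h_i)$. The partition of unity exists on $M$ because bump functions on $E$ restrict to polytopes. Extend each $\varphi_i$ by zero using $h_i$, as in the proof of Whitney's easy embedding theorem. This gives an injective immersion
\[\Phi=(h_1\varphi_1,\dots,h_m\varphi_m,h_1,\dots,h_m)\colon M\to \R^N,\]
which is smooth as a map on a manifold with rough boundary. Injectivity and the immersion property follow as in the boundaryless case, since both are pointwise statements about $d\Phi$ on each tangent space $T_xM\cong E$. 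Because $M$ is compact, $\Phi$ is a homeomorphism onto the compact set $\Phi(M)$.

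\textbf{Step 3: thickening.} It remains to find an $n$-dimensional boundaryless submanifold $\widetilde{M}\subseteq\R^N$ containing $\Phi(M)$, with $\Phi$ a diffeomorphism onto a full, locally polyhedral subset.

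Locally near $\Phi(x)$, extend $\Phi\circ\varphi_i^{-1}$ via Step 1 to a smooth map $F_i$ on an open set $W_i\subseteq E$. Since $F_i$ is an immersion at points of $V_{\varphi_i}$, after shrinking $W_i$ it is an embedding. The problem is that the local sheets $F_i(W_i)$ need not agree outside $\Phi(M)$. I expect this compatibility to be the main obstacle.

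The fix I would try first is to choose a tubular structure. Take the normal bundle of the local sheets along $\Phi(M)$, patched by the partition of unity into a smooth field of $n$-planes defined near $\Phi(M)$. Let $\pi$ be the corresponding local projection onto a fixed reference sheet, and replace each sheet by its graph over the sheets chosen earlier, inductively over $i=1,\dots,m$. At step $i$, glue $F_i(W_i)$ to the previously built manifold using a cutoff that equals $1$ near $\Phi(M)\cap\operatorname{supp}h_i$. Do this in normal coordinates, so the sheets are graphs of maps into the normal space, and average the graphing functions with the cutoff. The sheets all agree on $\Phi(M)$, which has dense interior in each sheet. Hence their graphing functions agree to infinite order on $\Phi(M)$, and the averaged graph still contains $\Phi(M)$ and stays an embedded $n$-manifold after shrinking.

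Finally, set $\widetilde{M}$ to be a small open neighbourhood of $\Phi(M)$ in the resulting manifold. It is $\sigma$-compact, being an open subset of a submanifold of $\R^N$. It has no boundary, and the extended charts $F_i^{-1}$ restrict on $\Phi(M)$ to the original polyhedral charts. This shows that $M$ is a locally polyhedral full submanifold and that the inclusion is a $C^\infty$-diffeomorphism onto its image.
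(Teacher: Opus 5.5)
Your Steps 1 and 2 are sound. The paper also uses local Whitney extension, and the easy Whitney embedding goes through with a partition of unity. The gap is Step 3, which carries the entire content of the proposition and is not proved. You identify compatibility of the local extensions as the main obstacle, then only outline a procedure you ``would try first'', and as outlined it does not hold together. There is no global reference sheet to project onto; the only thing $F_i(W_i)$ can be written as a graph over is the manifold $N_{i-1}$ glued so far. Moreover, $F_i(W_i)$ is such a graph (a section $s$ of a tubular neighbourhood of $N_{i-1}$) only where the two sheets are $C^1$-close, i.e.\ in a thin neighbourhood of $\Phi(M)$, where their tangency along $\Phi(M)$ helps. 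Farther out, where $F_i(W_i)$ is not a graph over $N_{i-1}$, the sheets may meet transversally or fold back onto each other. Then the union of the graph of $\chi s$ with the retained parts of $N_{i-1}$ and $F_i(W_i)$ is not a manifold.

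The phrase ``stays an embedded $n$-manifold after shrinking'' hides exactly the argument that is needed:
\begin{itemize}
\item a compactness choice of thin neighbourhoods of compact pieces of $\Phi(M)$ in which the relevant sheets are graphs over one another;
\item a cutoff with $\{0<\chi<1\}$ inside that graph region, $\chi\equiv 0$ near the part of its boundary where $F_i(W_i)$ ends and $\chi\equiv 1$ near the part where $N_{i-1}$ ends;
\item a check that the retained pieces outside this region do not meet;
\item consistency of all these shrinkings over the $m$ steps while $\Phi(M)$ remains covered.
\end{itemize}
Two smaller points. First, averaging the normal projections of the sheets by a partition of unity does not produce projections, so the ``patched'' plane field needs a further step (e.g.\ a spectral projection, valid near $\Phi(M)$). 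Second, after the modification $F_i^{-1}$ is no longer a chart of the glued manifold, so polyhedral adaptedness has to be re-derived. That part is routine, via the inverse function theorem applied to a Whitney extension of $\psi\circ\Phi\circ\varphi_i^{-1}$ for a chart $\psi$ of $\widetilde{M}$.

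The paper sidesteps all of this by working intrinsically. It builds a strictly inner vector field $X$ on $M$ by applying a partition of unity to the local fields $x\mapsto y_j-x$ with $y_j\in P_j^{\circ}$, using that strictly inner vectors form a convex cone. Via a local Whitney extension of $X$ and compactness, it shows that for small $\tau>0$ the map $\text{Fl}^X_{\tau,0}$ is defined on all of $M$. This map carries $M$ diffeomorphically onto a locally polyhedral full submanifold of the boundaryless, $\sigma$-compact manifold $M\setminus\partial M$, and the paper then relabels $\text{Fl}^X_{\tau,0}(M)$ as $M$. The ambient manifold is $M$'s own interior, so no compatibility problem ever arises.

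If you want to keep your extrinsic route, replace the induction by a one-shot construction. Take nearest-point retractions $r_k$ onto the local sheets and a partition of unity $(\psi_k)$ near $\Phi(M)$ with $\operatorname{supp}(\psi_k)\cap\Phi(M)$ contained in the $k$-th sheet. Then $r=\sum_k\psi_k r_k$ satisfies $r(p)=p$ and $dr(p)=\Pi_p$ for all $p\in\Phi(M)$, where $\Pi_p$ is the orthogonal projection onto $d\Phi(T_{\Phi^{-1}(p)}M)$. Let $P$ be a smooth field of orthogonal rank-$(N-n)$ projections near $\Phi(M)$ with $P(p)=\text{id}-\Pi_p$ on $\Phi(M)$. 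Then the section $y\mapsto P(y)(y-r(y))$ vanishes transversally along the compact set $\Phi(M)$. Its zero set near $\Phi(M)$ is therefore an $n$-dimensional submanifold of $\R^N$ containing $\Phi(M)$.
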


Then $\text{Diff}^{\infty}(M)\subseteq C^{\infty}(M,\widetilde{M})$, where $\widetilde{M}$ admits a spray $X\colon T\widetilde{M}\to T(T\widetilde{M})$. The associated exponential function restricts to a local addition $\Sigma\colon D\to\widetilde{M}$ for $\widetilde{M}$ on an open subset $D\subseteq T\widetilde{M}$ (see \ref{x}).
According to \cite{Amiri},
we already have a $C^{\infty}$-manifold structure on $C^{\infty}(M,\widetilde{M})$ 
which is canonical in the sense that for each $l\in\N_0\cup\{\infty\}$ and $C^l$-manifold $L$, possibly with rough boundary, a map $g\colon L\to C^{\infty}(M,\widetilde{M})$ is $C^l$ if and only if
$g^{\wedge}\colon L\times M\to \widetilde{M},\ g^{\wedge}(x,y)\coloneqq g(x)(y)$
is a $C^{l,\infty}$-map. The topology underlying $C^{\infty}(M,\widetilde{M})$ is the compact-open $C^{\infty}$-topology.
Our goal is to prove that $\text{Diff}^{\infty}(M)$ is a submanifold of $C^{\infty}(M,\widetilde{M})$.
For this purpose, we recall the chart for $C^{\infty}(M,\widetilde{M})$ around $\text{id}_M$ 
described in \cite{Amiri}: 
Consider the open subset $D'\coloneqq (\pi_{T \widetilde{M}},\Sigma)(D)\subseteq \widetilde{M}\times \widetilde{M}$, where $\pi_{T\widetilde{M}}\colon T\widetilde{M}\to \widetilde{M}$ is the bundle projection, and the $C^{\infty}$-diffeomorphism $\theta\colon D\to D',\ v\mapsto(\pi_{T \widetilde{M}}(v),\Sigma(v))$ associated with $\Sigma$.
Then 
\[O\coloneqq\{\tau\in \mathcal{V}^{\infty}(M)\colon \tau(M)\subseteq D\}\]
is an open subset of $\mathcal{V}^{\infty}(M)$ and
\[O'\coloneqq\{g\in C^{\infty}(M,\widetilde{M})\colon (\text{id}_M,g)(M)\subseteq D'\}\]
an open subset of $C^{\infty}(M,\widetilde{M})$.
The map
\[\Psi\colon O\to O',\quad \tau\mapsto\Sigma\circ\tau\]
is a homeomorphism with inverse $\Phi\colon O'\to O,\ g\mapsto \theta^{-1}\circ(\text{id}_M,g)$ and $\Phi$ is a chart for $C^{\infty}(M,\widetilde{M})$ around $\text{id}_M$. We want to show that $\Phi$ is adapted to $\text{Diff}^{\infty}(M)$:

\begin{proposition}\label{c}
Let $n\in\N_0$ and $\widetilde{M}$ be a $\sigma$-compact $n$-dimensional $C^{\infty}$-manifold without boundary , $M\subseteq\widetilde{M}$ be a compact locally polyhedral full submanifold, $X\colon T\widetilde{M}\to T(T\widetilde{M})$ be a spray on $\widetilde{M}$ such that $X(T\partial_i M)\subseteq T(T\partial_i M)$ for all $i\in\{0,\ldots,n\}$ and $\Sigma\colon D\to \widetilde{M}$ be the associated local addition for $\widetilde{M}$.  Then there exists an open $0$-neighbourhood $Q\subseteq \mathcal{V}^{\infty}(M)$ such that $\Sigma\circ\tau$ is well-defined for all $\tau\in Q$ and
\begin{align*}
\Psi(Q\cap \mathcal{V}^{\infty}_{\text{\emph{str}}}(M))=\Psi(Q)\cap\text{\emph{Diff}}^{\infty}(M).
\end{align*}
Moreover, $\text{\emph{Diff}}^{\infty}(M)$ is a submanifold of $C^{\infty}(M,\widetilde{M})$ modeled on $\mathcal{V}^{\infty}_{\text{\emph{str}}}(M)$.
\end{proposition}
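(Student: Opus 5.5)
We prove the two inclusions separately after shrinking to a suitable $0$-neighbourhood $Q$; the submanifold statement then follows by translating charts. Here $\partial_i M$ denotes the $i$-stratum (points of index $i$, i.e. lying in the relative interior of a face of codimension $i$ in a chart polytope). We read the stratified vector fields of Definition \ref{dd} as those $\tau\in\mathcal{V}^\infty(M)$ with $\tau(x)\in T_x\partial_i M$ for $x\in\partial_i M$, which is a closed subspace. The invariance hypothesis $X(T\partial_iM)\subseteq T(T\partial_iM)$ says that geodesics of the spray starting tangent to a stratum stay in that stratum (more precisely, in its closure, a face) for as long as they are defined in $\widetilde M$.

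\emph{Step 1: $\Psi(\tau)$ is a diffeomorphism for $\tau$ stratified and small.} Let $\tau\in Q\cap\mathcal{V}^\infty_{\text{str}}(M)$.
\begin{itemize}
\item By the invariance above, $\Sigma(\tau(x))=\exp(\tau(x))$ lies in the same face as $x$. Locally each face is a convex polytope face, which we use to argue that the geodesic does not leave it. So $f:=\Sigma\circ\tau$ maps $M$ into $M$ and each stratum into itself.
\item Since $M$ is compact and $Q$ is a $C^1$-small neighbourhood, $f$ is $C^1$-close to $\mathrm{id}_M$. Hence $f$ is a local diffeomorphism and injective; the latter by a standard compactness/Lebesgue-number argument using that $\theta$ is a diffeomorphism near the diagonal.
\item For surjectivity, we apply invariance of domain stratum by stratum, by induction on the codimension. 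On the closed faces $f$ restricts to maps of the faces into themselves that are homotopic to the identity through maps preserving faces, e.g.\ via $t\mapsto\Sigma(t\tau)$. The stratum $\partial_nM$ of vertices is fixed pointwise. On a compact face $F$, $f|_F$ is an injective local homeomorphism $F\to F$ mapping $\partial F$ into $\partial F$ and, by induction, bijectively onto it. Then $f(F^\circ)$ is open and closed in $F^\circ$, hence equal to it, since $F^\circ$ is connected.
\item Smoothness of the inverse comes from the inverse function theorem on rough-boundary manifolds, applied to the local extension of $f$ to $\widetilde M$.
\end{itemize}

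\emph{Step 2: $\Phi(f)$ is stratified for $f\in\Psi(Q)\cap\text{Diff}^\infty(M)$.} Let $f=\Sigma\circ\tau$ be such a diffeomorphism.
\begin{itemize}
\item A diffeomorphism of a locally polyhedral manifold preserves the index of points, since the index is determined by the tangent cone, which is invariant under diffeomorphisms. So $f(\partial_iM)=\partial_iM$.
\item Being close to the identity, $f$ maps each point $x$ into the same face $F$ through $x$; nearby distinct faces of the same codimension are separated.
\item Then $\tau(x)=\theta^{-1}(x,f(x))$ with $x,f(x)\in F$. Since $\exp$ restricted to $TF$ is a local diffeomorphism onto a neighbourhood of the diagonal in $F\times F$ (by the invariance of $X$), the unique small preimage lies in $T_xF$. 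Thus $\tau(x)\in T_x\partial_iM$.
\item This requires choosing $Q$ so small that uniqueness of small preimages under $\theta$ holds uniformly on $M$, which is possible by compactness.
\end{itemize}

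\emph{Step 3: the submanifold structure.} Steps 1 and 2 show that $\Phi$ restricted to $\Psi(Q)$ is a submanifold chart at $\mathrm{id}_M$ modeled on the closed subspace $\mathcal{V}^\infty_{\text{str}}(M)$. To obtain charts at an arbitrary $g\in\text{Diff}^\infty(M)$, we use $h\mapsto \Phi(h\circ g^{-1})$. Right composition $R_{g^{-1}}\colon h\mapsto h\circ g^{-1}$ is smooth on $C^\infty(M,\widetilde M)$ by the exponential law of \cite{Amiri}, and it is a diffeomorphism with inverse $R_g$. It preserves $\text{Diff}^\infty(M)$, so it transports the adapted chart from $\mathrm{id}_M$ to $g$.

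The main obstacle is Step 1: showing that geodesics of the stratum-preserving spray starting in a face remain in $M$, and obtaining surjectivity onto each face near corners. I would first try to handle this locally in polytope charts, where faces are affine pieces and the invariance condition forces the geodesic ODE to preserve each affine face subspace. Working inside these subspaces combined with convexity should then settle it.
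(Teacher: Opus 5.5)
Your proposal has a genuine gap, and it sits where you suspect it. Two of your claims rest on the same unproved fact:
\begin{itemize}
\item the first bullet of Step 1, that $\Sigma\circ\tau$ maps $M$ into $M$ and each face into itself;
\item the third bullet of Step 2, that $\exp$ restricted to $TF$ is a local diffeomorphism onto a neighbourhood of the diagonal in $F\times F$.
\end{itemize}
Both need the whole arc $t\mapsto\Sigma(t\tau(x))$, $t\in[0,1]$, to stay in the closed face $F$ through $x$, uniformly up to the relative boundary of $F$.

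The hypothesis $X(T\partial_iM)\subseteq T(T\partial_iM)$ does not give this. In a chart it only says that $X$ is tangent to $TF=F\times E_F$ at points of $TF$ (Lemma \ref{u}). It says nothing at points outside $F$, so, contrary to your closing remark, it does not make the geodesic equation preserve $\text{aff}(F)$. A geodesic starting tangentially in $F$ stays in $F$ only until it reaches a lower-dimensional face. There its velocity may point out of $F$, and afterwards it can leave $\text{aff}(F)$ and $M$, since the spray is unconstrained off $M$. Convexity of $F$ does not help, because geodesics are curved.

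What actually prevents escape is that a stratified $\tau$ is tangent to every lower face (in particular it vanishes at vertices). So $C^1$-smallness forces $\tau(x)$ to be small relative to the distance from $x$ to the lower faces. This quantitative statement is missing from your sketch. Your surjectivity near corners and your uniform inversion of $\exp$ on faces both depend on it.

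The paper gets this from Glöckner's criterion, Proposition \ref{n}: $\text{id}_P+g$ is a face-respecting diffeomorphism whenever $g$ is stratified with $\Vert g\Vert_{\infty,\text{op}}<1$. This is used inside an induction on $\dim E$ for polytopes (Proposition \ref{m}):
\begin{itemize}
\item For a proper face $F$ one takes the projected spray $X_F(z,v)=(z,v,v,\pi_F(f(z+x_F,v)))$ on $E_F=\text{aff}(F)-x_F$. Its geodesics agree with those of $X$ as long as they stay in $F$, by \eqref{gl.p}.
\item The induction hypothesis is applied to $t\,\tau|_F$ for every $t\in[0,1]$; this is why $[0,1]$-saturated neighbourhoods are used. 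It shows each $\Sigma_F(\cdot,t\tau)$ is a diffeomorphism of the face, so the entire arc stays in $F$ and $\Sigma\circ(\text{id}_P,\tau)$ maps $F$ onto $F$.
\item Then $\Sigma\circ(\text{id}_P,\tau)-\text{id}_P$ is stratified with operator norm $<1$, and Proposition \ref{n} also handles the interior points.
\item The converse inclusion avoids any pointwise inversion of $\exp$ on faces. It writes $f|_F=\Psi_F(\sigma)$ with $\sigma$ stratified and uses injectivity of $\theta$.
\item The manifold case is reduced to this polytope case via cut-off operators $\Xi$ that preserve stratification and modified sprays $X_j$ on $E$.
\end{itemize}

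Your other ingredients are sound: injectivity by a Lebesgue-number argument, the open--closed argument on strata once face preservation is known, smoothness of the inverse, index preservation, and transporting charts by right translation. One caveat: ``nearby distinct faces of the same codimension are separated'' fails pointwise, since adjacent faces meet. You need, as the paper does, connectedness of $\text{algint}(F)\cap B_j$ together with a single reference point $x_F$.
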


For $k\in\N$, we can construct a Banach manifold structure on $\text{Diff}^k(M)$ mo\-de\-led on $\mathcal{V}^k_{\text{str}}(M)$ analogously. This enables us to show that $\text{Diff}^{\infty}(M)=\bigcap_{k\in\N} \text{Diff}^k(M)$ is a Fr\'echet-Lie group of the class considered by Hermas and Bedida \cite{HB}, whence the $L^1$-regularity of $\text{Diff}^{\infty}(M)$ follows (see \cite{GS24}).

\section{Preliminaries and basic facts}

We write $\N\coloneqq\{1,2,\ldots\}$ and $\N_0\coloneqq\N\cup\{0\}$. 
Hausdorff locally convex topological $\R$-vector spaces will simply be called “locally convex spaces”.
We shall use the infinite-dimensional differential calculus going back to Bastiani \cite{Bastiani}, also known as Keller's $C^k_c$-theory. Therefore the manifolds we consider are modeled on locally convex spaces which may be infinite-dimensional, unless the contrary is stated. For the following approach to $C^k$-maps on non-open domains, we refer to \cite{GN}:

\begin{point}
Let $E$ and $F$ be locally convex spaces, and $U\subseteq E$ be a subset which is \emph{locally convex} (in the sense that each $x\in U$ has a convex neighbourhood in $U$) and whose interior $U^{\circ}$ is dense in $U$.
A map $f\colon U\to F$ is called $C^0$ if $f$ is continuous.
We say that $f$ is $C^1$ if $f$ is continuous, the directional derivative
\[df(x,y)\coloneqq (D_yf)(x)\coloneqq\lim_{t\to 0}\frac{f(x+ty)-f(x)}{t}\]
exists for all $(x,y)\in U^{\circ}\times E$ and admits a continuous extension 
\[df\colon U\times E\to F.\]
Given $k\in\N$, we say that $f$ is $C^k$ if $f$ is $C^1$ and $df$ is $C^{k-1}$. If $f$ is $C^k$ for all $k\in\N$, then $f$ is called $C^{\infty}$ or \emph{smooth}. 
\end{point}

\begin{point}
A subset $V\subseteq E$ of a locally convex space $E$ is called \emph{$[0,1]$-saturated}, if $tV\subseteq V$ for all $t\in[0,1]$. Each $0$-neighbourhood $U\subseteq E$ contains an open $[0,1]$-saturated $0$-neighbourhood $V$ (cf. \cite{GN}).
\end{point}



\begin{point}
Following \cite{GN}, for $k\in\N\cup\{\infty\}$, a $C^k$-\emph{ma\-ni\-fold with rough boundary} mo\-de\-led on a locally convex space $E$ is a Hausdorff topological space $M$, together with a maximal atlas $\mathcal{A}$ of $C^k$-compatible homeomorphisms $\varphi\colon U_{\varphi}\to V_{\varphi}$ from open subsets $U_{\varphi}\subseteq M$ onto locally convex subsets $V_{\varphi}\subseteq E$ with dense interior. If $k=0$, we assume in addition that $\varphi(x)\in\partial V_{\varphi}$ if and only if $\psi(x)\in \partial V_{\psi}$ for all $\varphi,\psi\in \mathcal{A}$ and $x\in U_{\varphi}\cap U_{\psi}$. If each $V_{\varphi}$ is open, $M$ is an ordinary $C^k$-manifold without boundary. Let $\pi_{TM}\colon TM\to M$ denote the bundle projection and $d\varphi\colon TU_{\varphi}\to E$ the second component of $T\varphi\colon TU_{\varphi}\to TV_{\varphi}=V_{\varphi}\times E$ for all $\varphi\in\mathcal{A}$.
\end{point}

\begin{point}
If $M$ and $N$ are $C^k$-manifolds for $k\in\N_0\cup\{\infty\}$, possibly with rough boundary, we endow the set $C^k(M,N)$ of all $C^k$-maps $f\colon M\to N$ with the \emph{compact-open $C^k$-topology} as defined \cite{GN}. An open subset $U\subseteq C^{1}(M,N)$ will often be called \emph{$C^1$-open}.
\end{point}



We shall also deal with maps on products with different degrees of differentiability in the two factors (see \cite{AS} and \cite{GN}):

\begin{point}
Let $E_1, E_2$ and $F$ be locally convex spaces, $U_1\subseteq E_1$ and $U_2\subseteq E_2$ be locally convex subsets with dense interior and $k,l\in\N_0\cup\{\infty\}$. A map $f\colon U_1\times U_2\to F$ is called $C^{k,l}$, if $d^{(0,0)}f\coloneqq f$ is continuous, the iterated directional derivatives
\begin{align*}
d^{(i,j)}f(x,y,v_1,\ldots,v_i,w_1,\ldots,w_j)\coloneqq (D_{(v_i,0)}\cdots D_{(v_1,0)}D_{(0,w_j)}\cdots D_{(0,w_1)}f)(x,y)
\end{align*}
exist for all $i,j\in\N_0$ such that $i\leq k$ and $j\leq l$, $(x,y)\in U_1^{\circ}\times U_2^{\circ}$ and $ v_1,\ldots,v_i\in E_1, w_1,\ldots, w_j\in E_2$, and admit continuous extensions
\[d^{(i,j)}f\colon U_1\times U_2\times E_1^i\times E_2^j\to F.\]
In a similar way, $C^{(\alpha_1,\ldots,\alpha_m)}$-maps $f\colon U_1\times\ldots\times U_m\to F$ can be defined for $m\in\N$ and $\alpha_1,\ldots, \alpha_m\in \N_0\cup\{\infty\}$ (see \cite{Alzaareer}).
\end{point}

\begin{point}
Let $M_1$ be a $C^k$-manifold, $M_2$ be a $C^l$-manifold and $N$ be a $C^{k+l}$-manifold, possibly with rough boundary, for $k,l\in\N_0\cup\{\infty\}$. As in \cite{GN}, we say that a map $f\colon M_1\times M_2\to N$ is $C^{k,l}$  if, for each $x=(x_1,x_2)\in M_1\times M_2$, there exist charts $\varphi_j\colon U_j\to V_j$ of $M_j$ around $x_j$ for $j\in\{1,2\}$ and a chart $\psi\colon U_{\psi}\to V_{\psi}$ of $N$ such that $f(U_1\times U_2)\subseteq U_{\psi}$ and $\psi\circ f\circ(\varphi_1^{-1}\times \varphi_2^{-1})\colon V_1\times V_2\to V_{\psi}$ is $C^{k,l}$.

Note that $C^{(\alpha_1,\ldots,\alpha_m)}$-maps $f\colon U_1\times\ldots\times U_m\to N$ can be defined analogously for $m\in\N$ and $\alpha_1,\ldots, \alpha_m\in \N_0\cup\{\infty\}$ (see \cite{GS}).
\end{point}

We get the following chain rule by applying \cite[Lemma 3.16]{Alzaareer} in local charts:

\begin{lemma}\label{bb}
Let $L_1, L_2, L_3, M_1, M_2, N$ be smooth manifolds, possibly with rough boundary, and $r,s,t\in\N_0\cup\{\infty\}$. If $f_1\colon L_1\to M_1$ is a $C^r$-map, $f_2\colon L_2\times L_3\to M_2$ a $C^{s,t}$-map and $g\colon M_1\times M_2\to N$ a $C^{r,s+t}$-map,  then $g\circ(f_1\times f_2)\colon L_1\times L_2\times L_3\to N$ is a $C^{r,s,t}$-map.
\end{lemma}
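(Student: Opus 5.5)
The plan is to reduce Lemma \ref{bb} to the local statement \cite[Lemma 3.16]{Alzaareer} for maps between locally convex subsets with dense interior. We work in charts and use the fact that the $C^{k,l}$- and $C^{(\alpha_1,\ldots,\alpha_m)}$-properties are local. Fix a point $(x_1,x_2,x_3)\in L_1\times L_2\times L_3$ and set $y_1\coloneqq f_1(x_1)$ and $y_2\coloneqq f_2(x_2,x_3)$. By the definition of $C^{r,s+t}$-maps between manifolds, there are charts $\phi_1\colon U_1\to V_1$ of $M_1$ around $y_1$ and $\phi_2\colon U_2\to V_2$ of $M_2$ around $y_2$, together with a chart $\psi\colon U_\psi\to V_\psi$ of $N$, such that $g(U_1\times U_2)\subseteq U_\psi$ and $G\coloneqq \psi\circ g\circ(\phi_1^{-1}\times\phi_2^{-1})$ is $C^{r,s+t}$. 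Here one has to check that the $C^{r,s+t}$-property of a local representative does not depend on the chosen charts, which follows from the chain rule for $C^{k,l}$-maps composed with smooth chart changes. Hence we may shrink $U_1,U_2$ freely, and shrinking the chart domains keeps their images locally convex with dense interior.

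Next, by continuity of $f_1$ and $f_2$ we choose charts $\kappa_1\colon W_1\to Z_1$ of $L_1$ around $x_1$ and $\kappa_j\colon W_j\to Z_j$ of $L_j$ around $x_j$ for $j=2,3$, such that $f_1(W_1)\subseteq U_1$ and $f_2(W_2\times W_3)\subseteq U_2$. We also arrange that the local representatives $F_1\coloneqq \phi_1\circ f_1\circ\kappa_1^{-1}$ and $F_2\coloneqq\phi_2\circ f_2\circ(\kappa_2^{-1}\times\kappa_3^{-1})$ are $C^r$ and $C^{s,t}$ respectively. This requires choosing the charts of $L_2,L_3$ compatibly with $\phi_2$, which is possible after shrinking, again using chart-independence. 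The local representative of $g\circ(f_1\times f_2)$ is then
\[
\psi\circ g\circ(f_1\times f_2)\circ(\kappa_1^{-1}\times\kappa_2^{-1}\times\kappa_3^{-1}) = G\circ(F_1\times F_2),
\]
which is $C^{r,s,t}$ by \cite[Lemma 3.16]{Alzaareer}. Since the point was arbitrary, the lemma follows.

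The main obstacle is the bookkeeping around chart-independence and domain compatibility. The definition in \cite{GN} only asserts the existence of some charts making the representative $C^{k,l}$, so we need to know that the property then holds for all sufficiently small charts. I would first derive this from \cite[Lemma 3.16]{Alzaareer} itself, applied with smooth (hence $C^{\infty}$ in every variable) chart transition maps on either side. I would also check that \cite[Lemma 3.16]{Alzaareer} is stated for domains that are locally convex with dense interior rather than only open ones. If it covers only open domains, the rough-boundary case has to be handled through the continuous extensions of the iterated derivatives $d^{(i,j,k)}$ from the interior, using density.
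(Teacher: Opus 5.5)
Your proposal is correct and follows the paper's route. The paper's entire proof is the one-line remark that the lemma follows by applying \cite[Lemma 3.16]{Alzaareer} in local charts, and your argument is that reduction written out: you form the local representative $G\circ(F_1\times F_2)$, choose charts of $L_1,L_2,L_3$ that map into the chart domains used for $g$, and obtain chart independence from the same chain rule applied to smooth transition maps. Your fallback about open versus rough domains is not needed, because the paper cites \cite{Alzaareer} for exactly this calculus on products of locally convex subsets with dense interior.
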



\subsubsection*{Stratified vector fields}

\begin{point}
Throughout the whole article, let $n\in\N_0$ and $E$ be an $n$-dimensional locally convex space. 
\end{point}

\begin{point}
A \emph{convex polytope} (or “polytope”, for short) in $E$ is the convex hull 
of a non-empty finite subset of $E$. An overview concerning polytopes and there faces is given in \cite{Brondsted}. 
We shall often use the fact that a polytope $P\subseteq E$ is an intersection of a finite number of closed halfspaces of $E$, i.e. there are continuous linear functionals $\lambda_1,\ldots,\lambda_m\colon E\to \R$ with $\lambda_j\neq 0$ for $j\in\{1,\ldots,m\}$ and $a_1,\ldots,a_m\in\R$ such that {$P=\bigcap_{j=1}^m \lambda_j^{-1}(]-\infty,a_j])$} (see \cite[Theorem 9.2]{Brondsted}). Given a face $F\subseteq P$, we write $\text{aff}(F)$ for the affine subspace of $E$ generated by $F$, $\text{dim}(F)\coloneqq \text{dim}(\text{aff}(F))$ and $\text{algint}(F)$ for the \emph{algebraic interior} of $F$, which is the interior of $F$ as a subset of $\text{aff}(F)$ (called the “relative interior” in \cite{Brondsted}).
If $P$ has non-empty interior in $E$, then the \emph{index} of $x$ is defined as $\text{ind}_P(x)\coloneqq n-\text{dim}\, P(x)$, where $P(x)$ is the smallest face of $P$ containing $x$. For a face $F\subseteq P$, we have $F=P(x)$ if and only if $x\in\text{algint}(F)$.
\end{point}



\begin{point}
Let $M$ be an $n$-dimensional locally polyhedral $C^{\infty}$-manifold (see De\-fi\-ni\-tion \ref{cc}) and $x\in M$. According to \cite{G}, for all charts $\varphi\colon U_{\varphi}\to V_{\varphi}\subseteq P_{\varphi}$ and $\psi\colon U_{\psi}\to V_{\psi}\subseteq P_{\psi}$ for $M$ around $x$, we have $\text{ind}_{P_{\varphi}}(\varphi(x))=\text{ind}_{P_{\psi}}(\psi(x))$, whence
\[\text{ind}_M(x)\coloneqq\text{ind}_{P_{\varphi}}(\varphi(x))\]
is a well-defined integer in $\{0,1,\ldots,n\}$. Furthermore,
\[\partial_iM\coloneqq\{x\in M\colon \text{ind}_M(x)=i\}\]
is an $(n-i)$-dimensional submanifold of $M$.
\end{point}

\begin{point}
If $P\subseteq E$ is a polytope with non-empty interior, the connected components of $\partial_iP$ are the algebraic interiors $\text{algint}(F)$ for faces $F\subseteq P$ of dimension $n-i$ (see \cite[Lemma 5.4]{G}).
\end{point}

\begin{definition}\label{dd}
Given  $k\in\N_0\cup\{\infty\}$, let $\mathcal{V}^k(M)$ denote the space of all $C^k$-vector fields $X\colon M\to TM$ on $M$.
Following \cite{G}, a $C^k$-vector field $X\in\mathcal{V}^k(M)$ is called \emph{stra\-ti\-fied} if $X(\partial_iM)\subseteq T\partial_iM$ for all $i\in \{0,\ldots,n\}$. In other words, $X$ restricts to a $C^k$-vector field on each $\partial_iM$. We write $\mathcal{V}_{\text{str}}^k(M)\subseteq \mathcal{V}^k(M)$ for the vector subspace of all stratified $C^k$-vector fields on $M$ and endow both spaces with the topology induced by $C^k(M,TM)$.
\end{definition}

\begin{point}
In the special case that $M=V\subseteq P$ is an open subset of a polytope $P\subseteq E$ with non-empty interior, we identify $TV$ with $V\times E$ as usual, and the space $C^k(V,E)$ with $\mathcal{V}^k(V)$ via $f\mapsto(\text{id}_V,f)$. Then
\[C^k_{\text{str}}(V,E)\coloneqq\{f\in C^k(V,E)\colon (\text{id}_V,f)(\partial_iV)\subseteq T\partial_iV\ \forall i\in\{0,\ldots,n\}\}\]
is the vector subspace of all stratified $C^k$-vector fields $f\colon V\to E$.
From the observations in \cite{G}, we obtain:
\end{point}

\begin{remark}\label{ii}
For a $C^k$-map $f\colon V\to E$, the following conditions are equivalent:
\begin{itemize}
\item[(a)] $f\in C^k_{\text{str}}(V,E)$;
\item[(b)] $(f+\text{id}_V)(\text{algint}(F)\cap V)\subseteq\text{aff}(F)\quad\text{for all faces }F\subseteq P.$
\end{itemize}
\end{remark}

Since $\text{aff}(F)$ is closed in $E$, it is easy to see that $C^k_{\text{str}}(V,E)$ is closed in $C^k(V,E)$, using the compact-open $C^k$-topology. 

\begin{lemma}\label{ff}
Let $M$ be a locally polyhedral $C^{\infty}$-manifold and $k\in\N_0\cup\{\infty\}$. Then the following holds:
\begin{itemize}
\item[(a)] 
If $X\colon M\to TM$ is a $C^k$-vector field, then $X\in \mathcal{V}_{\text{\emph{str}}}^k(M)$ if and only if $d\varphi\circ X\circ\varphi^{-1}\in C^k_{\text{\emph{str}}}(V_{\varphi},E)$ for each chart $\varphi\colon U_{\varphi}\to V_{\varphi}\subseteq P_{\varphi}$ of $M$.
\item[(b)]  $\mathcal{V}_{\text{\emph{str}}}^k(M)$ is closed in $\mathcal{V}^k(M)$.
\item[(c)] If $M$ is compact and $k<\infty$, then $\mathcal{V}_{\text{\emph{str}}}^k(M)$ is a Banach space.
\end{itemize}
\end{lemma}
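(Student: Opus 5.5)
For (a), I will work locally and use that the stratification is intrinsic. The key input is that chart changes preserve the stratification. If $\varphi,\psi$ are charts and $x\in U_\varphi\cap U_\psi$, then $\text{ind}_M(x)$ is independent of the chart, so $\varphi(\partial_iM\cap U_\varphi)=\partial_iV_\varphi$. Since $\partial_iM$ is a submanifold, $\varphi$ restricts to a diffeomorphism $\partial_iM\cap U_\varphi\to\partial_iV_\varphi$. Its tangent map therefore identifies $T\partial_iM|_{U_\varphi}$ with $T\partial_iV_\varphi\subseteq V_\varphi\times E$.

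For a vector field $X$, set $f_\varphi:=d\varphi\circ X\circ\varphi^{-1}$. Then $X(\partial_iM\cap U_\varphi)\subseteq T\partial_iM$ holds if and only if $(\text{id}_{V_\varphi},f_\varphi)(\partial_iV_\varphi)\subseteq T\partial_iV_\varphi$. The charts cover $M$, and stratification is a pointwise condition. Hence $X$ is stratified if and only if $f_\varphi\in C^k_{\text{str}}(V_\varphi,E)$ for every chart $\varphi$. This proves both directions of (a). The one point needing care is the identification of $T\partial_iV_\varphi$ inside $V_\varphi\times E$: over $\text{algint}(F)\cap V_\varphi$, the fibre is the linear subspace parallel to $\text{aff}(F)$. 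This is consistent with Remark \ref{ii}.

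For (b), I will use that the compact-open $C^k$-topology on $\mathcal{V}^k(M)\subseteq C^k(M,TM)$ is initial with respect to the restriction maps $X\mapsto f_\varphi$ into $C^k(V_\varphi,E)$, as $\varphi$ ranges over an atlas. This is standard from \cite{GN}, since $C^k(M,TM)$ carries the topology making restrictions to chart domains and chart representations continuous. By (a),
\[
\mathcal{V}^k_{\text{str}}(M)=\bigcap_{\varphi}\{X\in\mathcal{V}^k(M)\colon f_\varphi\in C^k_{\text{str}}(V_\varphi,E)\},
\]
which is an intersection of preimages of closed sets under continuous maps. Each $C^k_{\text{str}}(V_\varphi,E)$ is closed, as remarked after Remark \ref{ii}. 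Hence $\mathcal{V}^k_{\text{str}}(M)$ is closed.

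For (c), take $M$ compact and $k<\infty$. Then $\mathcal{V}^k(M)$ is a Banach space: cover $M$ by finitely many charts and compact sets $K_j\subseteq U_{\varphi_j}$ with $\bigcup_j K_j=M$, and use the norm $\max_j\|f_{\varphi_j}|_{K_j}\|_{C^k}$. The resulting map into the finite product of spaces $C^k(K_j,E)$ is a topological embedding with closed image, the image being closed by the compatibility conditions on overlaps. Combined with (b), the closed subspace $\mathcal{V}^k_{\text{str}}(M)$ is itself Banach.

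The main obstacle I expect is not conceptual but foundational. It lies in justifying the initial-topology description and completeness of $\mathcal{V}^k(M)$ on non-open, polyhedral domains, where $C^k$ is defined through continuous extensions of derivatives. I would cite \cite{GN} for these facts rather than reprove them.
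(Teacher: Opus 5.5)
Your proposal is correct and follows the paper's proof essentially step for step: (a) via $\varphi(\partial_iM\cap U_\varphi)=\partial_iV_\varphi$ and the induced identification of $T\partial_iM$ with $T\partial_iV_\varphi$; (b) via continuity of $Y\mapsto d\varphi\circ Y\circ\varphi^{-1}$ (the paper cites \cite[Lemma 4.1.4]{GN}; you only need continuity, not the full initial-topology statement) together with closedness of $C^k_{\text{str}}(V_\varphi,E)$; and (c) via completeness of $\mathcal{V}^k(M)$ plus passage to a closed subspace. One caveat: in your heuristic completeness sketch for (c), describing the image as the tuples compatible on overlaps requires the interiors of the $K_j$, not just the $K_j$, to cover $M$; since you ultimately defer to \cite{GN} exactly as the paper does (Proposition 4.1.28 applied to $\pi_{TM}$), this does not affect the proof.
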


\begin{proof}
\begin{itemize}
\item[(a)] Let $i\in\{0,\ldots,n\}$. By definition of $\partial_i M$, we have $\varphi(\partial_iM\cap U_{\varphi})=\partial_iP\cap V_{\varphi}=\partial_i V_{\varphi}$ for each chart $\varphi\colon U_{\varphi}\to V_{\varphi}\subseteq P_{\varphi}$ of $M$, and thus also $T\varphi(T(\partial_iM\cap U_{\varphi}))=T\partial_iV_{\varphi}$. As a consequence, $(T\varphi\circ X\circ\varphi^{-1})(\partial_i V_{\varphi})\subseteq T\partial_i V_{\varphi}$ for all $\varphi$ if and only if $X(\partial_iM\cap U_{\varphi})\subseteq T(\partial_iM\cap U_{\varphi})$ for all $\varphi$, which is equivalent to $X(\partial_i M)\subseteq T\partial_i M$.

\item[(b)] 
Let $(X_i)_{i\in I}$ be a net in $\mathcal{V}_{\text{str}}^k(M)$ converging to a limit $X\in \mathcal{V}^k(M)$ and $\varphi\colon U_{\varphi}\to V_{\varphi}\subseteq P_{\varphi}$ be a chart for $M$.
By \cite[Lemma 4.1.4]{GN}, the map
\[\mathcal{V}^k(M)\to C^k(V_{\varphi},E),\quad Y\mapsto d\varphi\circ Y\circ\varphi^{-1}\]
is continuous, thus $d\varphi\circ X_i\circ\varphi^{-1}\to d\varphi\circ X\circ\varphi^{-1}$. By (a), we see that $d\varphi\circ X_i\circ\varphi^{-1}\in C^k_{\text{str}}(V_{\varphi},E)$ for all $i\in I$. It follows that $d\varphi\circ X\circ\varphi^{-1}\in C^k_{\text{str}}(V_{\varphi},E)$, using that $C^k_{\text{str}}(V_{\varphi},E)$ is closed in $C^k(V_{\varphi},E)$. Applying (a) again, we deduce that $X\in\mathcal{V}_{\text{str}}^k(M)$, whence $\mathcal{V}_{\text{str}}^k(M)$ is closed.
\item[(c)] Now let $M$ be compact and $k<\infty$. Applying \cite[Proposition 4.1.28 (a) and (c)]{GN} to the bundle projection $\pi_{TM}\colon TM\to M$, we see that $\mathcal{V}^k(M)$ is a Banach space. Then $\mathcal{V}_{\text{str}}^k(M)$ is also one, being a closed vector subspace of $\mathcal{V}^k(M)$.
\end{itemize}
\end{proof}

\subsubsection*{Flows}

Now we compile some basic facts concerning flows of smooth vector fields from \cite[Section 2.5]{GN}.

\begin{point}
Let $X\colon M\to TM$ be a $C^{\infty}$-vector field on a finite-dimensional $C^{\infty}$-manifold $M$, possibly with rough boundary. Given $(t_0,y_0)\in\R\times M$,
there is an open interval $I_{t_0,y_0}\subseteq\R$ and a solution $\gamma_{t_0,y_0}\colon I_{t_0,y_0}\to M$ to the initial value problem
\begin{equation}\label{gl.m}
\dot{y}(t)=X(y(t)),\quad y(t_0)=y_0
\end{equation}
which is $\emph{maximal}$ in the sense that we have
\[I\subseteq I_{t_0,y_0}\quad\text{and}\quad \gamma=\gamma_{t_0,y_0}\vert_{I}\]
for each solution $\gamma\colon I\to M$ to \eqref{gl.m}. 
\end{point}

\begin{point}
The subset $\Omega\subseteq \R\times\R\times M$ given by
\[\Omega\coloneqq\bigcup_{(t_0,y_0)\in\R\times M}I_{t_0,y_0}\times\{(t_0,y_0)\}\]
is the domain of the \emph{maximal flow} $\text{Fl}\colon\Omega\to M,\ \text{Fl}(t,t_0,y_0)\coloneqq \gamma_{t_0,y_0}(t)$. Given $t,t_0\in\R$, we let $\Omega_{t,t_0}\coloneqq\{y_0\in M\colon t\in I_{t_0,y_0}\}$ and consider the partial map $\text{Fl}_{t,t_0}\colon\Omega_{t,t_0}\to M,\ y_0\mapsto\text{Fl}(t,t_0,y_0)$. Then we have:
\begin{itemize}
\item[(a)] $\Omega$ is open in $\R\times\R\times M$ and $\text{Fl}\colon\Omega\to M$ is smooth.
\item[(b)] $\Omega_{t,t_0}$ is open in $M$ for all $t,t_0\in\R$ and $\text{Fl}_{t,t_0}\colon\Omega_{t,t_0}\to\Omega_{t_0,t}$ is a $C^{\infty}$-diffeomorphism with $(\text{Fl}_{t,t_0})^{-1}=\text{Fl}_{t_0,t}$.
\item[(c)] If $t_0,t_1,t_2\in\R$ and $y_0\in\Omega_{t_1,t_0}$ such that $\text{Fl}_{t_1,t_0}(y_0)\in\Omega_{t_2,t_1}$, then $y_0\in\Omega_{t_2,t_0}$ and $\text{Fl}_{t_2,t_0}(y_0)=\text{Fl}_{t_2,t_1}(\text{Fl}_{t_1,t_0}(y_0))$.
\end{itemize}
\end{point}

\begin{remark}\label{r}
Let $\varphi\colon U_{\varphi}\to V_{\varphi}\subseteq E$ be a chart for $M$ and $\gamma\colon I\to M$ be a $C^{1}$-map on a non-degenerate interval $I\subseteq\R$ such that $\gamma(I)\subseteq U_{\varphi}$. Then the following conditions are equivalent:
\begin{itemize}
\item[(a)] $\gamma$ solves $\dot{y}(t)=X(y(t))$;
\item[(b)] $\varphi\circ\gamma$ solves $\dot{y}(t)=(T\varphi\circ X\circ\varphi^{-1})(y(t))$;
\item[(c)] $\varphi\circ\gamma$ solves $y'(t)=(d\varphi\circ X\circ\varphi^{-1})(y(t))$.
\end{itemize}
\end{remark}

\begin{proof}
The equivalence of (b) and (c) is clear, because $T\varphi\circ X\circ\varphi^{-1}=(\text{id}_{V_{\varphi}}, d\varphi\circ X\circ\varphi^{-1})$. Since $d\varphi(\dot{\gamma}(t))=(\varphi\circ\gamma)'(t)$ and $d\varphi(X(\gamma(t)))=(d\varphi\circ X\circ\varphi^{-1})((\varphi\circ\gamma)(t))$
for all $t\in I$, we see that $\dot{\gamma}(t)=X(\gamma(t))$ if and only if $(\varphi\circ\gamma)'(t)=(d\varphi\circ X\circ\varphi^{-1})((\varphi\circ\gamma)(t))$, which shows the equivalence of (a) and (c).
\end{proof}

\subsubsection*{Sprays}

We recall from \cite[Section 3.8]{GN} some background information concerning sprays.

\begin{point}\label{s}
Let $V\subseteq E$ be an open subset. A $C^{\infty}$-vector field $X\colon TV\to T(TV)$ is called a \emph{spray} on $V$, if it is of the form
\[X(x,v)=(x,v,v,f(x,v))\quad \text{for all } (x,v)\in TV,\]
where $f\colon TV\to E$ is a $C^{\infty}$-function satisfying $f(x,sv)=s^2f(x,v)$ for all $(x,v)\in TV$ and $s\in\R$. Then the following holds:
\begin{itemize}
\item[(a)] Given $(x_0,v_0)\in TV,$ a $C^{\infty}$-function $(\gamma,\eta)\colon I\to V\times E$ on a non-degenerate interval $I\subseteq\R$ containing $0$ is a solution to the initial value problem
\begin{equation}\label{gl.n}
\dot{y}(t)=X(y(t)),\quad y(0)=(x_0,v_0)
\end{equation}
if and only if $\eta=\gamma'$ and $\gamma\colon I\to V$ is a solution to
\begin{equation*}
y''(t)=f(y(t),y'(t)),\quad y'(0)=v_0,\quad y(0)=x_0.
\end{equation*}
\item[(b)]  Let $(\gamma,\gamma')\colon I\to V\times E$ be a solution to \eqref{gl.n}. For $s\in\R$, consider the interval $I_s\coloneqq\{t\in\R\colon st\in I\}$ and the $C^{\infty}$-function $\gamma_s\colon I_s\to V,\ \gamma_s(t)\coloneqq \gamma(st)$. Then $(\gamma_s,\gamma_s')$ is a solution to $\dot{y}(t)=X(y(t)),\ y(0)=(x_0,sv_0)$.
\end{itemize}
\end{point}

\begin{point}\label{x}
Let $M$ be a finite-dimensional $C^{\infty}$-manifold without boundary. A smooth vector field $X\colon TM\to T(TM)$ is called a \emph{spray} on $M$ if $T^2\varphi\circ X\circ T\varphi^{-1}$ is a spray on $V_{\varphi}$ for each chart $\varphi\colon U_{\varphi}\to V_{\varphi}$ of $M$. Consider the associated \emph{exponential function} 
\[\text{exp}_X\colon\Omega_{1,0}\to M,\quad v\mapsto\pi_{TM}(\text{Fl}_{1,0}(v)),\]
where $\text{Fl}_{1,0}\colon \Omega_{1,0}\to TM$ belongs to the flow of $\dot{y}(t)=X(y(t))$ and $\Omega_{1,0}\subseteq TM$ is open.
Then there is an open subset $D\subseteq \Omega_{1,0}$ such that $\Sigma\coloneqq \text{exp}_X\vert_D\colon D\to M$ is a \emph{local addition} for $M$ in the sense that
\begin{itemize}
\item[(a)] $0_x\in D$ for all $x\in M$ and $\Sigma(0_x)=x$ (for $0_x\in T_xM$); and
\item[(b)] The set $D'\coloneqq\{(\pi_{TM}(v),\Sigma(v))\colon v\in D\}$  is open in $M\times M$ and the map $\theta\colon D\to D',\ v\mapsto (\pi_{TM}(v),\Sigma(v))$ is a $C^{\infty}$-diffeomorphism.
\end{itemize}
We say that the local addition $\Sigma$ is \emph{associated with} $X$.
\end{point}

\begin{lemma}\label{o}
Let $X\colon TM\to T(TM)$ be a spray on $M$, $v_0\in M$ and $\gamma\colon I\to TM$ be a solution to the initial value problem $\dot{y}(t)=X(y(t)),\ y(0)=v_0$. Then, for each $s\in \R$, the $C^{\infty}$-function
\[\gamma_s\colon I_s\to TM,\quad \gamma_s(t)\coloneqq s\gamma(st)\]
with $I_s\coloneqq\{t\in\R\colon st\in I\}$ is a solution to $\dot{y}(t)=X(y(t)),\ y(0)=sv_0$.
\end{lemma}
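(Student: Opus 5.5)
The plan is to reduce Lemma \ref{o} to the local statement \ref{s}(b), using Remark \ref{r} to pass between the manifold and chart descriptions. Fix $s\in\R$. The case $s=0$ is direct: $I_0=\R$ (since $0\in I$) and $\gamma_0(t)=0\cdot\gamma(0)=0_{x_0}$ with $x_0\coloneqq\pi_{TM}(v_0)$. In a chart, a spray has the form $X(x,0)=(x,0,0,f(x,0))$ with $f(x,0)=0$ by homogeneity, so $X(0_x)$ is the zero vector of $T_{0_x}(TM)$. Hence the constant curve $0_{x_0}$ solves $\dot y=X(y)$ with $y(0)=0=sv_0$. From now on let $s\neq0$.

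The property of being a solution is local in $t$, so it suffices to check that $\gamma_s$ solves $\dot y=X(y)$ near each $t_1\in I_s$. The initial condition is clear, since $\gamma_s(0)=s\gamma(0)=sv_0$. Fix $t_1\in I_s$ and choose a chart $\varphi\colon U_\varphi\to V_\varphi$ of $M$ around $\pi_{TM}(\gamma(st_1))$. Then $T\varphi$ is a chart for $TM$ with image $V_\varphi\times E$. By continuity there is a non-degenerate interval $J\subseteq I$ around $st_1$ with $\gamma(J)\subseteq TU_\varphi$. Define $J_s\coloneqq\{t\colon st\in J\}$, which is an interval around $t_1$.

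By Remark \ref{r}, applied to the manifold $TM$ with chart $T\varphi$, the curve $T\varphi\circ\gamma|_J$ solves $\dot y=Y(y)$, where $Y\coloneqq T^2\varphi\circ X\circ T\varphi^{-1}$. By \ref{x}, $Y$ is a spray on $V_\varphi$, say $Y(x,v)=(x,v,v,f(x,v))$. By \ref{s}(a), $T\varphi\circ\gamma|_J=(\beta,\beta')$ with $\beta\coloneqq\varphi\circ\pi_{TM}\circ\gamma|_J$ solving $\beta''=f(\beta,\beta')$. Now $\beta_s(t)\coloneqq\beta(st)$ on $J_s$ satisfies $\beta_s'(t)=s\beta'(st)$. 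Therefore
\[(\beta_s,\beta_s')(t)=(\beta(st),s\beta'(st))=T\varphi\bigl(s\gamma(st)\bigr)=T\varphi(\gamma_s(t)),\]
because $T\varphi$ is linear on fibres. The equation for $\beta_s$ is checked directly from homogeneity: $\beta_s''(t)=s^2f(\beta(st),\beta'(st))=f(\beta_s(t),\beta_s'(t))$. This is exactly \ref{s}(b), which we do not need in its initial-value form at $0$ because we are working near an arbitrary $t_1$. So, by \ref{s}(a), $(\beta_s,\beta_s')$ solves $\dot y=Y(y)$ on $J_s$. By Remark \ref{r} again, $\gamma_s|_{J_s}$ solves $\dot y=X(y)$.

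Since $t_1$ was arbitrary and the intervals $J_s$ cover $I_s$, $\gamma_s$ is a solution on $I_s$. Smoothness of $\gamma_s$ is clear, because it is the composition of $\gamma$ with the smooth maps $t\mapsto st$ and fibrewise multiplication by $s$.

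The only delicate point is the identification $T\varphi(s\,w)=(\beta(st),s\beta'(st))$: the scalar multiplication on $TM$ must correspond in the chart to scaling the second component, which holds because the second component $d\varphi$ of $T\varphi$ is linear on fibres. Apart from this, the proof is bookkeeping of the localization.
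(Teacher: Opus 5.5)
Your proof is correct and follows essentially the same route as the paper: you localize with a chart $T\varphi$ of $TM$, use Remark \ref{r} and \ref{s}(a) to write $T\varphi\circ\gamma=(\beta,\beta')$, rescale to $\beta_s(t)=\beta(st)$, and pull back via the fibrewise linearity of $T\varphi$. Your separate treatment of $s=0$ and your direct homogeneity check, instead of citing \ref{s}(b), are harmless refinements. The general argument already covers $s=0$, since then $J_s=\R$.
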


\begin{proof}
Given $s\in\R$, let $t_0\in I_s$. There exists a chart $\varphi\colon U_{\varphi}\to V_{\varphi}\subseteq E$ for $M$ such that $\gamma(st_0)\in TU_{\varphi}$. Furthermore, there is a relative open interval $J\subseteq I$ with $st_0\in J$ and $\gamma(J)\subseteq TU_{\varphi}$. Considering the spray $X_{\varphi}\coloneqq (T^2{\varphi}\circ X\circ T\varphi^{-1})\colon TV_{\varphi}\to T(TV_{\varphi})$, Remark \ref{r} entails that $T\varphi\circ\gamma\colon J\to V_{\varphi}\times E$ solves $\dot{y}(t)=X_{\varphi}(y(t)).$ By \ref{s}, we have $T\varphi\circ\gamma=(\eta,\eta')$ for some $C^{\infty}$-function $\eta\colon J\to V_{\varphi}$. Writing $\eta_s\colon J_s\to V_{\varphi},\ \eta_s(t)\coloneqq\eta(st)$ with $J_s\coloneqq\{t\in\R\colon st\in J\}$, the function $(\eta_s,\eta_s')\colon J_s\to V_{\varphi}\times E$ is a solution to $\dot{y}(t)=X_{\varphi}(y(t)).$ Again with Remark \ref{r}, we see that $T\varphi^{-1}\circ (\eta_s,\eta_s')\colon J_s\to TU_{\varphi}$ solves $\dot{y}(t)=X(y(t))$. For all $t\in J_s$, we have
\begin{align*}
T\varphi^{-1}(\eta_s(t),\eta_s'(t))&=T\varphi^{-1}(\eta(st),s\eta'(st))=sT\varphi^{-1}(\eta(st),\eta'(st))\\ &=sT\varphi^{-1}(T\varphi(\gamma(st)))
=s\gamma(st)=\gamma_s(t).
\end{align*}
It follows that $\dot{\gamma_s}(t_0)=X(\gamma_s(t_0))$. In addition, $\gamma_s(0)=s\gamma(0)=sv_0$ holds.
\end{proof}

\begin{lemma}\label{p}
Let $X\colon TM\to T(TM)$ be a spray on a finite-dimensional $C^{\infty}$-manifold without boundary, 
$\text{\emph{exp}}_X\colon\Omega_{1,0}\to M$ be the associated exponential function, $v\in\Omega_{1,0}$ and $U\subseteq M$ be an open subset. Let $\gamma\colon[0,1]\to TM$ be a solution to the initial value problem 
\[\dot{y}(t)=X(y(t)),\quad y(0)=v.\]
If $sv\in\Omega_{1,0}$ and $\text{\emph{exp}}_X(sv)\in U$ for all $s\in[0,1]$, then $\gamma([0,1])\subseteq TU$.
\end{lemma}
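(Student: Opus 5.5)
The idea is to use Lemma \ref{o} to express $\gamma(t)$ through the exponential function, and then to apply the hypothesis with $s=t$. Since $v\in\Omega_{1,0}$, there is a maximal solution $\gamma_{0,v}\colon I_{0,v}\to TM$ of $\dot y=X(y)$, $y(0)=v$, with $[0,1]\subseteq I_{0,v}$. By maximality, $\gamma=\gamma_{0,v}\vert_{[0,1]}$.

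Now fix $t\in[0,1]$. For $t=0$ we have $\pi_{TM}(\gamma(0))=\pi_{TM}(v)=\text{exp}_X(0\cdot v)$, which lies in $U$ by the hypothesis with $s=0$. For $t\in\,]0,1]$, apply Lemma \ref{o} with $s\coloneqq t$. The curve $\gamma_t(r)\coloneqq t\gamma(tr)$ is then defined on $I_t=\{r\colon tr\in I\}\supseteq[0,1/t]\supseteq[0,1]$, and it solves $\dot y=X(y)$, $y(0)=tv$. By maximality, $\gamma_t$ is a restriction of $\gamma_{0,tv}$, so $1\in I_{0,tv}$, i.e.\ $tv\in\Omega_{1,0}$. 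Moreover $\text{Fl}_{1,0}(tv)=\gamma_t(1)=t\gamma(t)$. Since the bundle projection satisfies $\pi_{TM}(t w)=\pi_{TM}(w)$, we obtain
\[
\pi_{TM}(\gamma(t))=\pi_{TM}(t\gamma(t))=\pi_{TM}(\text{Fl}_{1,0}(tv))=\text{exp}_X(tv)\in U,
\]
using the hypothesis with $s=t$. Hence $\gamma(t)\in\pi_{TM}^{-1}(U)=TU$ for every $t\in[0,1]$, which is the claim.

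The only delicate point is that $[0,1]$ is closed, so Lemma \ref{o} must be used with a solution defined on a non-open interval. The plan is to sidestep this by passing to the maximal solution, which is defined on the open interval $I_{0,v}\supseteq[0,1]$, and to rescale that solution instead. Uniqueness of solutions then guarantees that the rescaled curve agrees with $\gamma_{0,tv}$ on $[0,1]$. Note also that the hypothesis $sv\in\Omega_{1,0}$ is in fact recovered automatically by this argument; what is genuinely needed from the hypothesis is only $\text{exp}_X(sv)\in U$.
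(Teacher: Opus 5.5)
Your proof is correct and follows the paper's argument: handle $t=0$ via $\text{exp}_X(0\cdot v)=\pi_{TM}(v)$, then apply Lemma \ref{o} with $s=t$ to get $\pi_{TM}(t\gamma(t))=\text{exp}_X(tv)\in U$, hence $\gamma(t)\in TU$. Your detour through the maximal solution is harmless but not needed, because Lemma \ref{o} is stated, and proved via relatively open subintervals, for arbitrary intervals $I$; the paper therefore applies it directly on $[0,1/s]$.
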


\begin{proof}
We have \[\pi_{TM}(v)=\text{exp}_X(0_{\pi_{TM}(v)})=\text{exp}_X(0\cdot v)\in U,\] thus $\gamma(0)=v\in TU$. Now let $s\in]0,1]$. According to Lemma \ref{o}, the $C^{\infty}$-function
\[\gamma_s\colon\Big[0,\frac{1}{s}\Big]\to TM,\quad \gamma_s(t)\coloneqq s\gamma(st)\]
is a solution to
$\dot{y}(t)=X(y(t)),\ y(0)=sv.$
Consequently, \[\pi_{TM}(s\gamma(s))=\pi_{TM}(\gamma_s(1))=\text{exp}_X(sv)\in U,\] hence $s\gamma(s)\in TU$ and $\gamma(s)\in TU$.\end{proof}

\section{Proof of Proposition \ref{b}}
We will prove Proposition \ref{b} in the following way: As the first step, we construct a smooth vector field $X\colon M\to TM$ on $M$ such that $X(p)$ is a so-called strictly inner tangent vector for all $p\in M$. As the second step, we show that there \mbox{exists} $\tau>0$ such that $\text{Fl}_{\tau,0}(M)$ is a locally polyhedral full submanifold of $M\setminus\partial M$, where $\text{Fl}_{\tau,0}\colon M\to M$ belongs to the flow of $\dot{y}(t)=X(y(t))$. Hence we obtain an embedding $M\hookrightarrow M\setminus\partial M$ of $M$ into a manifold without boundary.

\begin{definition}
Let $M$ be a locally polyhedral $C^{\infty}$-manifold and $T_p M$ be the tangent space of $M$ in $p\in M$. We say that $v\in T_p M$ is an \emph{inner tangent vector}, if there exists $\varepsilon>0$ and a $C^{\infty}$-curve $\gamma: [0,\varepsilon[\to M$ such that 
\[\gamma(0)=p,\quad \dot\gamma(0)=v\quad  \text{and}\quad \gamma(]0,\varepsilon[)\subseteq M\setminus\partial M.\]
We write $T^i_p M$ for the subset of all inner tangent vectors. The elements in the interior $(T^i_p M)^{\circ}$ of $T^i_p M$ relative $T_p M$ are called \emph{strictly inner tangent vectors}. A \emph{strictly inner $C^{\infty}$-vector field} is a $C^{\infty}$-vector field $X\colon M\to TM$ such that $X(p)\in (T^i_p M)^{\circ}$ for all $p\in M$.
\end{definition}

\begin{lemma}\label{d}
Let $P\subseteq E$ be a polytope with non-empty interior, $\lambda_1,\ldots,\lambda_m\neq 0$ be continuous linear functionals on $E$ and $a_1,\ldots,a_m\in\R$ such that {$P=\bigcap_{j=1}^m \lambda_j^{-1}(]-\infty,a_j]).$} Given $x\in P$, we identify the tangent space $T_xP$ with $E$. Then, for all $v\in T_x P$, the following holds:
\begin{itemize}
\item[(a)] $v\in T^i_x P$ if and only if $\lambda_j(v)\leq 0$ for all $j\in\{1,\ldots,m\}$ with $\lambda_j(x)=a_j$.
\item[(b)] $v\in (T^i_x P)^{\circ}$ if and only if $\lambda_j(v)< 0$ for all $j\in\{1,\ldots,m\}$ with $\lambda_j(x)=a_j$.
\end{itemize}
\end{lemma}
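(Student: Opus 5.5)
Let $J(x):=\{j\in\{1,\ldots,m\}\colon \lambda_j(x)=a_j\}$ denote the set of active constraints at $x$. The plan is to prove (a) by a direct two-sided argument with curves, and then to obtain (b) by identifying the interior of the closed convex cone from (a).

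For (a), the necessity direction is short. Suppose $\gamma\colon[0,\varepsilon[\to P$ is $C^\infty$ with $\gamma(0)=x$, $\dot\gamma(0)=v$ and $\gamma(]0,\varepsilon[)\subseteq P\setminus\partial P$. For $j\in J(x)$ we have $\lambda_j(\gamma(t))\le a_j=\lambda_j(\gamma(0))$. Dividing by $t>0$ and letting $t\to0$ gives $\lambda_j(v)\le0$. Only $\gamma(t)\in P$ is used here.

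For sufficiency, assume $\lambda_j(v)\le0$ for all $j\in J(x)$. Since $P$ has non-empty interior, pick $z\in P^\circ$ and put $w:=z-x$. Then $\lambda_j(w)<0$ for every $j\in J(x)$, because $\lambda_j(z)<a_j$; indeed $\lambda_j\neq0$ is an open map, so on the interior of $P$ none of the constraints can be active. Consider the curve $\gamma(t):=x+tv+t^2w$. Its derivative at $0$ is $v$.
\begin{itemize}
\item For $j\in J(x)$: $\lambda_j(\gamma(t))-a_j=t\lambda_j(v)+t^2\lambda_j(w)<0$ for all $t>0$.
\item For $j\notin J(x)$: $\lambda_j(x)<a_j$, so by continuity $\lambda_j(\gamma(t))<a_j$ for all small $t$.
\end{itemize}
Hence for small $\varepsilon>0$ the curve satisfies $\gamma(]0,\varepsilon[)\subseteq\bigcap_j\lambda_j^{-1}(]-\infty,a_j[)$. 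This set is open and contained in $P$, hence contained in $P^\circ$.

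The one point that needs care is that $P\setminus\partial P=P^\circ$ in the sense of manifolds with rough boundary, i.e. that points of index $0$ are exactly the interior points. This follows from the index description: the smallest face of an interior point is $P$ itself. It also requires $P^\circ=\{y\colon\lambda_j(y)<a_j\ \forall j\}$. The inclusion $\supseteq$ is shown above. For $\subseteq$, note that if $\lambda_j(y)=a_j$ then points $y+s u$ with $\lambda_j(u)>0$ and $s>0$ small leave $P$, so $y$ is not interior.

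For (b), part (a) shows that $T^i_xP=C:=\{v\colon\lambda_j(v)\le0\ \forall j\in J(x)\}$. The claim is that the interior of $C$ is $C':=\{v\colon\lambda_j(v)<0\ \forall j\in J(x)\}$.
\begin{itemize}
\item $C'\subseteq C^\circ$: the set $C'$ is open because each $\lambda_j$ is continuous, and it is contained in $C$.
\item $C^\circ\subseteq C'$: suppose $v\in C$ with $\lambda_j(v)=0$ for some $j\in J(x)$. Since $\lambda_j\neq0$, choose $u$ with $\lambda_j(u)>0$. Then $v+su\notin C$ for every $s>0$, so $v$ is not an interior point of $C$.
\end{itemize}

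If $J(x)=\emptyset$, then both cones equal $E$, and the statements are trivial. The main obstacle is the sufficiency part of (a): one needs a curve that enters the open interior immediately. It is handled by adding the quadratic correction term $t^2w$ in the direction of an interior point.
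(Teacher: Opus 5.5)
Your proof is correct and follows essentially the same route as the paper: the same one-sided difference-quotient argument for necessity, a quadratic curve bent toward an interior point for sufficiency (the paper uses $t^2(y-x-v)+tv+x$, you use $x+tv+t^2(z-x)$, which works equally well), and for (b) the identification of the interior of the cone $\bigcap_{j}\lambda_j^{-1}(]-\infty,0])$, taken over $j$ with $\lambda_j(x)=a_j$, as $\bigcap_{j}\lambda_j^{-1}(]-\infty,0[)$. Your additional remarks identifying $P\setminus\partial P$ with $P^{\circ}=\bigcap_{j=1}^m\lambda_j^{-1}(]-\infty,a_j[)$ make explicit what the paper uses tacitly.
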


\begin{proof}
\begin{itemize}
\item[(a)] 
We write $J\coloneqq\{j\in\{1,\ldots,m\}\colon\lambda_j(x)=a_j\}.$ Let us assume that $v\in T^i_x P$ and let $\gamma: [0,\varepsilon[\to P$ be a $C^{\infty}$-curve with $\gamma(0)=x$ and $\gamma'(0)=v$. For all $j\in J$, we have
\[\lambda_j(v)=\lambda_j(\gamma'(0))=\lambda_j\Big(\lim_{t\to 0+}\frac{1}{t}(\gamma(t)-\gamma(0))\Big)=\lim_{t\to 0+}\frac{1}{t}(\lambda_j(\gamma(t))-a_j)\leq 0.\]
Conversely, we assume that $\lambda_j(v)\leq 0$ for all $j\in J$. We choose any $y\in P^{\circ}$ and consider the smooth curve
\[\gamma\colon[0,1[\to E,\quad \gamma(t)\coloneqq t^2(y-x-v)+tv+x.\]
Then $\gamma(0)=x$ and $\gamma'(0)=v$. For all $j\in J$ and $t\in]0,1[$, we have
\begin{align*}\lambda_j(\gamma(t))&=t^2(\lambda_j(y)-\lambda_j(x)-\lambda_j(v))+t\lambda_j(v)+\lambda_j(x)\\ &=t^2\lambda_j(y)+(1-t^2)\lambda_j(x)+(t-t^2)\lambda_j(v)<a_j,
\end{align*}
because $\lambda_j(y)<a_j$ and $\lambda_j(v)\leq 0.$ For all $j\in\{1,\ldots,m\}\setminus J$, the curve $\lambda_j\circ\gamma\colon[0,1[\to\R$ is continuous with $(\lambda_j\circ\gamma)(0)<a_j$, so that there is $r_j\in]0,1[$ such that $\lambda_j(\gamma(t))<a_j$ for all $t\in[0,r_j[.$ Writing $r\coloneqq \text{min}\{r_j\colon j\in\{1,\ldots,m\}\setminus J$\}, we deduce that $\gamma(]0,r[)\subseteq P^{\circ}$, whence $v\in T_x^i P$.

\item[(b)] In (a), we have shown that $T_x^i P=\bigcap_{j\in J}\lambda_j^{-1}(]-\infty,0]).$ Since
\[(T_x^i P)^{\circ}= \bigcap_{j\in J}(\lambda_j^{-1}(]-\infty,0]))^{\circ}=\bigcap_{j\in J}\lambda_j^{-1}(]-\infty,0[),\] the assertion follows.
\end{itemize}
\end{proof}

\begin{lemma}
Let $M$ and $N$ be locally polyhedral $C^{\infty}$-manifolds and $f\colon U\to V$ be a $C^{\infty}$-diffeomorphism between open subsets $U\subseteq M$ and $V\subseteq N$. For $p\in U$, the tangent map $T_p f\colon T_pM\to T_{f(p)}N$ satisfies
\[T_pf(T_p^i M)=T_{f(p)}^i N\] and \[T_pf((T_p^i M)^{\circ})=(T_{f(p)}^i N)^{\circ}.\]
\end{lemma}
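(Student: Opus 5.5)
The argument will show one inclusion for an arbitrary diffeomorphism and obtain the reverse by applying the same result to $f^{-1}$. Let $f\colon U\to V$ be as in the statement and $p\in U$. We first prove
\[T_pf(T_p^iM)\subseteq T_{f(p)}^iN.\]
Applying this to the $C^{\infty}$-diffeomorphism $f^{-1}\colon V\to U$ at $f(p)$ gives $T_{f(p)}f^{-1}(T^i_{f(p)}N)\subseteq T^i_pM$. Since $T_{f(p)}f^{-1}=(T_pf)^{-1}$, this is the reverse inclusion, and so $T_pf(T_p^iM)=T^i_{f(p)}N$. For the interiors, note that $T_pf\colon T_pM\to T_{f(p)}N$ is a linear isomorphism of finite-dimensional spaces, hence a homeomorphism. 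A homeomorphism maps the interior of a set onto the interior of its image, so $T_pf((T^i_pM)^{\circ})=(T_pf(T^i_pM))^{\circ}=(T^i_{f(p)}N)^{\circ}$.

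To prove the inclusion, let $v\in T^i_pM$. By definition there are $\varepsilon>0$ and a $C^{\infty}$-curve $\gamma\colon[0,\varepsilon[\to M$ with $\gamma(0)=p$, $\dot\gamma(0)=v$ and $\gamma(]0,\varepsilon[)\subseteq M\setminus\partial M$.

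\begin{itemize}
\item[(1)] \emph{Shrink the interval.} Since $U$ is open and $\gamma$ is continuous with $\gamma(0)=p\in U$, we can shrink $\varepsilon$ so that $\gamma([0,\varepsilon[)\subseteq U$.
\item[(2)] \emph{Push the curve forward.} Then $\eta\coloneqq f\circ\gamma\colon[0,\varepsilon[\to N$ is $C^{\infty}$ by the chain rule for maps on locally convex domains with dense interior (as in \cite{GN}). It satisfies $\eta(0)=f(p)$ and $\dot\eta(0)=T_pf(\dot\gamma(0))=T_pf(v)$.
\item[(3)] \emph{Show $\eta$ stays in the interior.} We must show $f(U\setminus\partial M)\subseteq N\setminus\partial N$, i.e.\ that $f$ preserves index $0$.
\end{itemize}

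Step (3) is the only real point. Diffeomorphisms between open subsets of locally polyhedral manifolds preserve the index by the invariance result of \cite{G} that underlies the well-definedness of $\text{ind}_M$. Indeed, if $\varphi$ is a chart of $N$ around $f(x)$, then $\varphi\circ f$, restricted to a small open neighbourhood of $x$, is again a chart of $M$ by maximality of the atlas. Hence $\text{ind}_M(x)=\text{ind}_{P_{\varphi}}(\varphi(f(x)))=\text{ind}_N(f(x))$.

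Alternatively, one can argue directly. Points of index $0$ are exactly the points having a neighbourhood diffeomorphic to an open subset of $E$. That property is transported by $f$ and $f^{-1}$, since a point of $V_\varphi$ on the boundary of the polytope has no neighbourhood in $V_\varphi$ that is open in $E$.

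With step (3) in hand, $\eta(]0,\varepsilon[)\subseteq N\setminus\partial N$. Therefore $\eta$ witnesses $T_pf(v)\in T^i_{f(p)}N$, which is the claimed inclusion.
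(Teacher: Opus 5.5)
Your proposal is correct and follows the paper's proof essentially verbatim: you push the witnessing curve forward by $f$, obtain the reverse inclusion by applying this to $f^{-1}$, and pass to interiors because $T_pf$ is a linear homeomorphism. The only addition is your step (3), which the paper simply asserts: you justify, via maximality of the atlas (so that $\varphi\circ f$ is a chart of $M$ near $x$), that $f$ preserves the index and hence maps $U\setminus\partial M$ into $N\setminus\partial N$, and that argument is sound.
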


\begin{proof}
Let $v\in T_p^i M$ and $\gamma: [0,\varepsilon[\to M$ be a $C^{\infty}$-curve with $\gamma(0)=p,\ \dot\gamma(0)=v$ and $\gamma(]0,\varepsilon[)\subseteq M\setminus\partial M$. After shrinking $\eps$, we may assume that $\gamma([0,\eps[)\subseteq U$. Then $\delta\coloneqq f\circ\gamma\colon[0,\eps[\to N$ is a $C^{\infty}$-curve such that $\delta(0)=f(p),\ \dot\delta(0)=T_pf(v)$ and $\delta(]0,\eps[)\subseteq N\setminus\partial N,$ which shows that $T_pf(v)\in T_{f(p)}^i N$. Hence $T_pf(T_p^i M)\subseteq T_{f(p)}^i N$. Using this inclusion for $f^{-1}$ instead of $f$, we get $T_{f(p)}f^{-1}(T_{f(p)}^i N)\subseteq T_p^i M$, thus the first equation holds. Since $T_p f\colon T_p M\to T_{f(p)} N$ is a homeomorphism, the second equation follows.
\end{proof}

\begin{corollary}\label{e}
Let $M$ be a locally polyhedral $C^{\infty}$-manifold, $p\in M$ and $\varphi\colon U_{\varphi}\to V_{\varphi}\subseteq P_{\varphi}$ be a chart for $M$ around $p$, where $P_{\varphi}\subseteq E$ is a polytope with non-empty interior. Then we have \[T_p\varphi(T_p^i M)=T_{\varphi(p)}^i P_{\varphi}\] and \[T_p\varphi((T_p^i M)^{\circ})=(T_{\varphi(p)}^i P_{\varphi})^{\circ}.\]
\end{corollary}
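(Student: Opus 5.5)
This follows directly from the preceding lemma on diffeomorphisms between open subsets of locally polyhedral manifolds, applied with $f\coloneqq\varphi$, $N\coloneqq P_{\varphi}$, $U\coloneqq U_{\varphi}$ and $V\coloneqq V_{\varphi}$. The plan is to check that this specialization is legitimate and then read off both equations.

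First I observe that $P_{\varphi}$ is a locally polyhedral $C^{\infty}$-manifold. Its atlas is generated by the identity chart $\text{id}_{P_{\varphi}}$, since $P_{\varphi}$ is a convex polytope with non-empty interior. Next, $V_{\varphi}$ is open in $P_{\varphi}$, and $U_{\varphi}$ is open in $M$. The chart $\varphi\colon U_{\varphi}\to V_{\varphi}$ is then a $C^{\infty}$-diffeomorphism. This holds because, by definition of the maximal polyhedral atlas, $\varphi$ and $\varphi^{-1}$ are smooth when read in the charts $\varphi$ of $M$ and $\text{id}_{P_{\varphi}}$ of $P_{\varphi}$; in those charts they are simply identity maps of $V_{\varphi}$.

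One point needs checking: inner tangent vectors at $\varphi(p)$ must mean the same thing whether computed in $V_{\varphi}$ or in $P_{\varphi}$. The lemma, applied with $N=P_{\varphi}$ or with $N=V_{\varphi}$, gives $T_{\varphi(p)}^iV_{\varphi}$. Since $V_{\varphi}$ is open in $P_{\varphi}$, any curve $\gamma\colon[0,\eps[\to P_{\varphi}$ starting at $\varphi(p)$ stays in $V_{\varphi}$ once $\eps$ is shrunk. Also $\partial V_{\varphi}=\partial P_{\varphi}\cap V_{\varphi}$, because the index is defined locally. Hence $T^i_{\varphi(p)}V_{\varphi}=T^i_{\varphi(p)}P_{\varphi}$, and the same identity holds for the interiors, as both are taken in $T_{\varphi(p)}V_{\varphi}=E=T_{\varphi(p)}P_{\varphi}$.

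With these identifications, the lemma yields $T_p\varphi(T_p^iM)=T^i_{\varphi(p)}P_{\varphi}$ and $T_p\varphi((T_p^iM)^{\circ})=(T^i_{\varphi(p)}P_{\varphi})^{\circ}$. There is no real obstacle here. The only care needed is the bookkeeping above about $V_{\varphi}$ versus $P_{\varphi}$ and about identifying $T_{\varphi(p)}P_{\varphi}$ with $E$ as in Lemma \ref{d}.
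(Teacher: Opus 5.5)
Your proposal is correct and matches the paper, which gives no separate proof and treats the corollary as the immediate specialization of the preceding lemma to $f=\varphi$, $N=P_{\varphi}$, $U=U_{\varphi}$, $V=V_{\varphi}$. Your extra comparison of $T^i_{\varphi(p)}V_{\varphi}$ with $T^i_{\varphi(p)}P_{\varphi}$ is valid but unnecessary, since the lemma applied with $N=P_{\varphi}$ already produces $T^i_{\varphi(p)}P_{\varphi}$ directly.
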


\begin{definition}
A subset $C\subseteq E$ is called a \emph{convex cone}, if the following conditions hold:
\begin{itemize}
\item[(1)] $]0,\infty[C\subseteq C$;
\item[(2)] $C+C\subseteq C$;
\item[(2')] $C$ is convex.
\end{itemize}
\end{definition}

\begin{remark}
\begin{itemize}
\item[(a)] The conditions (1) and (2) are equivalent to (1) and (2').
\item[(b)] If $C\subseteq E$ is a convex cone, then also $C^{\circ}$ is one.
\end{itemize}
\end{remark}

\begin{proof}
\begin{itemize}
\item[(a)] It is clear that (1) and (2) imply (2'). To see that (2) follows from (1) and (2'), let $x,y\in C$. Then $x+y=2(\frac{1}{2}x+(1-\frac{1}{2})y)\in C$.
\item[(b)] Since $x+C^{\circ}$ is open for all $x\in C$, the set $C^{\circ}+C^{\circ}=\bigcup_{x\in C^{\circ}}(x+C^{\circ})$ is open. Together with $C^{\circ}+C^{\circ}\subseteq C$, we get $C^{\circ}+C^{\circ}\subseteq C^{\circ}$. Similarly, one can see that $]0,\infty[C^{\circ}\subseteq C^{\circ}$.
\end{itemize}
\end{proof}

\begin{lemma}
Let $M$ be a locally polyhedral $C^{\infty}$-manifold and $p\in M$. Then $T_p^i M$ (and hence also ($T_p^i M)^{\circ}$) is a convex cone in $T_p M$.
\end{lemma}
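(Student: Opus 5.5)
The plan is to reduce to the polytope case via a chart and then use the explicit description from Lemma \ref{d}. Fix a chart $\varphi\colon U_{\varphi}\to V_{\varphi}\subseteq P_{\varphi}$ of $M$ around $p$, with $P_{\varphi}\subseteq E$ a polytope with non-empty interior. By Corollary \ref{e}, the linear isomorphism $T_p\varphi\colon T_pM\to E$ maps $T_p^iM$ onto $T^i_{\varphi(p)}P_{\varphi}$. A linear isomorphism maps convex cones to convex cones, because conditions (1) and (2) are preserved by linear bijections. So it suffices to show that $T^i_xP$ is a convex cone in $E$ for every polytope $P$ with non-empty interior and every $x\in P$.

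For this, write $P=\bigcap_{j=1}^m\lambda_j^{-1}(]-\infty,a_j])$ as in the preliminaries, and set $J\coloneqq\{j\colon\lambda_j(x)=a_j\}$. By Lemma \ref{d}(a),
\[T^i_xP=\bigcap_{j\in J}\lambda_j^{-1}(]-\infty,0]).\]
Each set $\lambda_j^{-1}(]-\infty,0])$ is closed under positive scalar multiples and under sums, since $\lambda_j$ is linear. An intersection of sets with both properties again has them. Hence $T^i_xP$ satisfies (1) and (2), and is therefore a convex cone. If $J=\emptyset$, the intersection is all of $E$, which is trivially a convex cone.

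For the parenthetical claim about $(T^i_pM)^{\circ}$, apply part (b) of the preceding Remark: the interior of a convex cone is again a convex cone. Alternatively, $T_p\varphi$ is a homeomorphism, so it maps $(T^i_pM)^{\circ}$ onto $(T^i_{\varphi(p)}P_{\varphi})^{\circ}=\bigcap_{j\in J}\lambda_j^{-1}(]-\infty,0[)$. This set is closed under positive scalings and sums for the same reason as above.

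The only point needing care is the identification $T_{\varphi(p)}P_{\varphi}\cong E$ together with linearity of $T_p\varphi$, which is standard. There is no real obstacle; the substance is already contained in Lemma \ref{d} and Corollary \ref{e}.
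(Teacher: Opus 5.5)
Your proposal is correct and follows essentially the same route as the paper: the polytope case via $T^i_xP=\bigcap_{j\in J}\lambda_j^{-1}(]-\infty,0])$ from Lemma \ref{d}(a), then transport to general $M$ through the linear isomorphism $T_p\varphi$ using Corollary \ref{e}, with Remark (b) giving the statement for the interior. The paper phrases the transport step by conjugating the addition map $\alpha=(T_p\varphi)^{-1}\circ\alpha_{\varphi}\circ(T_p\varphi\times T_p\varphi)$ rather than by observing that linear bijections preserve convex cones, but this is only a cosmetic difference.
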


\begin{proof}
First, we assume that $M\subseteq E$ is a polytope with non-empty interior, say ${M=\bigcap_{j=1}^m \lambda_j^{-1}(]-\infty,a_j])}$ with continuous linear functionals $\lambda_1,\ldots,\lambda_m$ {$\neq 0$} on $E$ and $a_1,\ldots,a_m\in\R$.
Let $v,w\in T_p^i M$ and $t>0$. For all $j\in\{1,\ldots,m\}$ with $\lambda_j(p)=a_j$,
we have $\lambda_j(v+w)=\lambda_j(v)+\lambda_j(w)\leq 0$ and $\lambda_j(tv)=t\lambda_j(v)\leq 0$, using Lemma \ref{d} (a). Therefore $v+w$ and $tv$ are in $T_p^i M$. This shows that $T_p^i M$ is a convex cone.

Now let $M$ be any locally polyhedral $C^{\infty}$-manifold. We choose a chart $\varphi\colon U_{\varphi}\to V_{\varphi}\subseteq P_{\varphi}$ for $M$ around $p$, where $P_{\varphi}\subseteq E$ is a polytope with non-empty interior. For the addition maps $\alpha\colon T_p M\times T_p M\to T_p M$ and $\alpha_{\varphi}\colon T_{\varphi(p)} P_{\varphi}\times T_{\varphi(p)} P_{\varphi}\to T_{\varphi(p)} P_{\varphi}$, we have 
$\alpha=(T_p\varphi)^{-1}\circ \alpha_{\varphi}\circ(T_p\varphi\times T_p\varphi)$. Since $T_{\varphi(p)} P_{\varphi}$ is a convex cone as seen before, we conclude with corollary \ref{e} that
\begin{align*}
T_p^i M+T_p^i M&=\alpha(T_p^i M\times T_p^i M)=(T_p\varphi)^{-1}(\alpha_{\varphi}((T_p\varphi\times T_p\varphi)(T_p^i M\times T_p^i M)))\\
&=(T_p\varphi)^{-1}(\alpha_{\varphi}(T^i_{\varphi(p)} P_{\varphi}\times T^i_{\varphi(p)}P_{\varphi}))\subseteq (T_p\varphi)^{-1}(T^i_{\varphi(p)} P_{\varphi})= T_p^i M.
\end{align*}
Similar arguments show that $]0,\infty[T_p^i M\subseteq T_p^i M$ holds.
\end{proof}

\begin{proposition}\label{i}
Every paracompact locally polyhedral $C^{\infty}$-manifold $M$ admits a strictly inner $C^{\infty}$-vector field $X\colon M\to TM$.
\end{proposition}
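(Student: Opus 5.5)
We construct $X$ locally and glue with a smooth partition of unity, using that the sets $(T_p^iM)^{\circ}$ are convex cones (by the preceding lemma and remark).

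\emph{Local fields.} Fix a chart $\varphi\colon U_{\varphi}\to V_{\varphi}\subseteq P_{\varphi}$ with $P_{\varphi}=\bigcap_{j=1}^m\lambda_j^{-1}(]-\infty,a_j])$, and pick a point $y_{\varphi}\in P_{\varphi}^{\circ}$. Define the smooth map $f_{\varphi}\colon V_{\varphi}\to E$, $f_{\varphi}(x)\coloneqq y_{\varphi}-x$. If $\lambda_j(x)=a_j$, then $\lambda_j(f_{\varphi}(x))=\lambda_j(y_{\varphi})-a_j<0$, since $y_{\varphi}$ is an interior point. By Lemma \ref{d}(b), $f_{\varphi}(x)\in (T^i_xP_{\varphi})^{\circ}$ for every $x\in V_{\varphi}$. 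Transport back by setting $X_{\varphi}\coloneqq T\varphi^{-1}\circ(\text{id}_{V_{\varphi}},f_{\varphi})\circ\varphi$. This is a smooth vector field on $U_{\varphi}$, and Corollary \ref{e} gives $X_{\varphi}(p)\in(T_p^iM)^{\circ}$ for all $p\in U_{\varphi}$.

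\emph{Gluing.} $M$ is paracompact and locally compact Hausdorff. It therefore admits a smooth partition of unity $(h_{\alpha})_{\alpha}$ subordinate to a cover by chart domains $U_{\varphi_\alpha}$. We need bump functions on the model spaces $V_{\varphi}\subseteq P_{\varphi}$; these are obtained by restricting smooth bump functions on $E$, so the standard construction goes through. Set
\[X\coloneqq\sum_{\alpha}h_{\alpha}X_{\varphi_\alpha},\]
where each term is extended by $0$ outside $\text{supp}(h_{\alpha})$. The family of supports is locally finite, so $X$ is a smooth vector field.

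\emph{Strict innerness.} Fix $p\in M$. Only finitely many $h_{\alpha}(p)$ are nonzero, they are all nonnegative, and at least one is positive. Hence $X(p)$ is a finite sum of positive multiples of vectors in $(T_p^iM)^{\circ}$. Since $(T_p^iM)^{\circ}$ is a convex cone, being closed under positive scaling and addition, $X(p)\in(T_p^iM)^{\circ}$.

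\emph{Expected obstacle.} The main technical point is the existence of smooth partitions of unity on a paracompact locally polyhedral manifold. Concretely, one needs a smooth function supported in $U_{\varphi}$ that equals $1$ near a given point. Taking $\chi\circ\varphi$ for a smooth bump $\chi$ on $E$ with compact support inside an open set $W$ satisfying $W\cap P_{\varphi}\subseteq V_{\varphi}$ achieves this. The usual paracompactness argument then produces the partition of unity. Everything else reduces to Lemma \ref{d} and Corollary \ref{e}.
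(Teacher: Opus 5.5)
Your proposal is correct and follows the paper's proof essentially step by step: local fields $x\mapsto y_\varphi-x$ with $y_\varphi\in P_\varphi^{\circ}$, made strictly inner via Lemma \ref{d}(b) and Corollary \ref{e}, glued by a smooth partition of unity subordinate to chart domains, and concluded with the convex-cone property of $(T_p^iM)^{\circ}$. The paper cites the partition of unity from \cite{GN} rather than sketching it. Your observation that only the terms with $h_\alpha(p)>0$ matter is slightly more careful than the paper's wording, since the zero vector need not lie in the open cone at boundary points.
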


\begin{proof}
Every locally polyhedral $C^{\infty}$-manifold is locally compact. According to \cite[Proposition 3.5.34]{GN}, there exists a $C^{\infty}$-partition of unity $(h_j)_{j\in J}$ on $M$ such that, for each $j\in J$, there is a chart $\varphi_j\colon U_j\to V_j\subseteq P_j$ of $M$ with $\text{supp}(h_j)\subseteq U_j$. Let $y_j\in (P_j)^{\circ}$. Lemma \ref{d} (b) helps us to see that
\[Y_j\colon V_j\to E,\quad x\mapsto y_j-x\]
is a strictly inner $C^{\infty}$-vector field on $V_j$. Thus we obtain with
\[X_j\coloneqq T\varphi_j^{-1}\circ(\text{id}_{V_j},Y_j)\circ\varphi_j\colon U_j\to TU_j\]
a strictly inner $C^{\infty}$-vector field on $U_j$. The map
\[X\colon M\to TM,\quad p\mapsto\sum_{j\in J}h_j(p)X_j(p)\]
is a finite sum for each $p\in M$ (if $p\notin U_j$, the summand should be read as 0) and a smooth vector field. Since $(T^i_pM)^{\circ}$ is a convex cone and $X_j(p)\in (T^i_pM)^{\circ}$ for all $j\in J$ with $p\in U_j$, we conclude that $X(p)\in (T^i_p M)^{\circ}$. Hence $X$ is a strictly inner $C^{\infty}$-vector field.
\end{proof}

\begin{lemma}\label{f}
Let $X \colon V\to E$ be a strictly inner $C^{\infty}$-vector field on an open subset $V\subseteq P$ of a polytope $P\subseteq E$ with non-empty interior, $\tau\geq 0$ and $\text{\emph{Fl}}_{\tau,0}^X\colon\Omega^X_{\tau,0}\to V$ be the partial map belonging to the flow of $y'(t)=X(y(t))$. If there exists $x\in \Omega^X_{\tau,0}$ such that $\text{\emph{Fl}}_{\tau,0}^X(x)\in\partial P$, then $\tau=0$.
\end{lemma}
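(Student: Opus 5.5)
Suppose $\tau>0$ and $x\in\Omega^X_{\tau,0}$ with $y_0\coloneqq\text{Fl}^X_{\tau,0}(x)\in\partial P$; I will derive a contradiction. Write $P=\bigcap_{j=1}^m\lambda_j^{-1}(]-\infty,a_j])$ as in Lemma \ref{d}. Since $y_0\in\partial P$, the set $J\coloneqq\{j\colon \lambda_j(y_0)=a_j\}$ is non-empty; indeed, if all inequalities at $y_0$ were strict, then by continuity $y_0$ would lie in the open set $\bigcap_j\lambda_j^{-1}(]-\infty,a_j[)\subseteq P^{\circ}$. Fix some $j\in J$.

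Consider the solution $\gamma\colon[0,\tau]\to V$, $\gamma(t)\coloneqq\text{Fl}^X_{t,0}(x)$. It is defined on $[0,\tau]$ because $\tau\in I_{0,x}$, this interval is an interval containing $0$, and the maximal solution is unique. It is $C^1$ and satisfies $\gamma'(t)=X(\gamma(t))$ with $\gamma(\tau)=y_0$. Set $h\coloneqq\lambda_j\circ\gamma\colon[0,\tau]\to\R$. Then $h(t)\le a_j$ for all $t$, since $\gamma(t)\in P$, and $h(\tau)=a_j$. Also $h'(\tau)=\lambda_j(X(y_0))<0$ by Lemma \ref{d}(b), because $X$ is strictly inner and $j\in J$.

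Since $\tau>0$, the point $\tau$ is a right endpoint of $[0,\tau]$, so the one-sided derivative from the left is available:
\[h'(\tau)=\lim_{t\to\tau-}\frac{h(t)-h(\tau)}{t-\tau}=\lim_{t\to\tau-}\frac{h(t)-a_j}{t-\tau}.\]
For $t<\tau$ the numerator is $\le 0$ and the denominator is $<0$, so every difference quotient is $\ge 0$. Hence $h'(\tau)\ge0$, which contradicts $h'(\tau)<0$. Therefore $\tau=0$.

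The only delicate point is making sure the trajectory really exists on all of $[0,\tau]$ inside $V\subseteq P$, so that the one-sided derivative argument applies at the terminal time. This follows from the definition of $\Omega_{\tau,0}$ and the maximality of $I_{0,x}$. One should also check that the derivative of $\gamma$ at the endpoint $\tau$ (a relative derivative on the interval) coincides with the left limit of the difference quotients. This holds because $\gamma$ is $C^1$ on an interval containing $[0,\tau]$ and $\lambda_j$ is linear and continuous. Note also that the case $\tau\ge0$ with the trajectory running forward is essential: for negative times the sign argument would fail.
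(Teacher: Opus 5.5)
Your proof is correct and follows essentially the same route as the paper's. Both assume $\tau>0$, pick a constraint $\lambda_j$ that is active at $\gamma(\tau)$, and use the left difference quotients of $\lambda_j\circ\gamma$ at $\tau$ to obtain $\lambda_j(X(\gamma(\tau)))\geq 0$, which contradicts Lemma \ref{d}(b). You also explicitly justify that some constraint is active at a boundary point, which the paper takes for granted.
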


\begin{proof}
Let $\lambda_1,\ldots,\lambda_m\neq 0$ be continuous linear functionals on $E$ and $a_1,\ldots,a_m$ {$\in\R$} such that $P=\bigcap_{j=1}^m \lambda_j^{-1}(]-\infty,a_j])$, and let $\gamma\colon I\to V$ be the maximal solution to the initial value problem
\[y'(t)=X(y(t)),\quad y(0)=x.\]
Since $\gamma(\tau)=\text{Fl}_{\tau,0}^X(x)\in\partial P$, there is $j\in\{1,\ldots,m\}$ such that $\lambda_j(\gamma(\tau))=a_j$.
Arguing by contradiction, we assume that $\tau>0$. Let $(t_n)_{n\in\N}$ be a sequence in $]-\tau,0[$ converging to $0$. Then we have $\frac{1}{t_n}(\lambda_j(\gamma(\tau+t_n))-\lambda_j(\gamma(\tau)))\geq 0$ for all $n\in\N$. It follows that
\[\lambda_j(X(\gamma(\tau)))=\lambda_j(\gamma'(\tau))=\lim_{n\to\infty}\frac{1}{t_n}(\lambda_j(\gamma(\tau+t_n))-\lambda_j(\gamma(\tau)))\geq 0,\]
whence $X(\gamma(\tau))\notin (T_{\gamma(\tau)}P)^{\circ}$, which is a contradiction to $X$ being a strictly inner vector field. Hence $\tau=0$.
\end{proof}

\begin{lemma}\label{g}
Let $X\colon M\to TM$ be a $C^{\infty}$-vector field on a finite-dimensional $C^{\infty}$-manifold $M$ with rough boundary, $\varphi\colon U_{\varphi}\to V_{\varphi}\subseteq E$ be a chart for $M$ and $X_{\varphi}\coloneqq d\varphi\circ X\circ\varphi^{-1}\colon V_{\varphi}\to E$.
Given $t,t_0\in\R$ with $t\geq t_0$, let $\text{\emph{Fl}}_{t,t_0}^{X}\colon \Omega_{t,t_0}^X\to M$ and $\text{\emph{Fl}}_{t,t_0}^{X_{\varphi}}\colon \Omega_{t,t_0}^{X_{\varphi}}\to V_{\varphi}$ be the partial maps belonging to the flows of $\dot{y}(t)=X(y(t))$ and $y'(t)=X_{\varphi}(y(t))$, respectively. Then the following holds:
\begin{itemize}
\item[(a)] If $x\in\Omega_{t,t_0}^{X_{\varphi}}$, then $\varphi^{-1}(x)\in\Omega_{t,t_0}^X$ and
\[\text{\emph{Fl}}_{t,t_0}^{X_{\varphi}}(x)=\varphi(\text{\emph{Fl}}_{t,t_0}^{X}(\varphi^{-1}(x))).\]
\item[(b)] If $y\in\Omega_{t,t_0}^X$ and $\text{\emph{Fl}}_{s,t_0}^{X}(y)\in U_{\varphi}$ for all $s\in[t_0,t]$, then $\varphi(y)\in\Omega_{t,t_0}^{X_{\varphi}}$ and
\[\text{\emph{Fl}}_{t,t_0}^{X_{\varphi}}(\varphi(y))=\varphi(\text{\emph{Fl}}_{t,t_0}^{X}(y)).\]
\end{itemize}
\end{lemma}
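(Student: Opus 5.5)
The plan is to transport integral curves through the chart $\varphi$ using Remark \ref{r} together with uniqueness of maximal solutions. For (a), take $x\in\Omega^{X_\varphi}_{t,t_0}$ and let $\eta\colon I^{X_\varphi}_{t_0,x}\to V_\varphi$ be the maximal solution of $y'(s)=X_\varphi(y(s))$, $y(t_0)=x$. Its interval contains $t$ and $t_0$. Set $\gamma\coloneqq\varphi^{-1}\circ\eta$. This is a $C^1$-curve with $\gamma(I^{X_\varphi}_{t_0,x})\subseteq U_\varphi$, and by Remark \ref{r} (the implication (c)$\Rightarrow$(a)) it solves $\dot y=X(y)$ with $\gamma(t_0)=\varphi^{-1}(x)$. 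Maximality of the solution $\gamma_{t_0,\varphi^{-1}(x)}$ then gives $I^{X_\varphi}_{t_0,x}\subseteq I^X_{t_0,\varphi^{-1}(x)}$ and $\gamma=\gamma_{t_0,\varphi^{-1}(x)}|_{I^{X_\varphi}_{t_0,x}}$. Hence $t\in I^X_{t_0,\varphi^{-1}(x)}$, i.e.\ $\varphi^{-1}(x)\in\Omega^X_{t,t_0}$, and evaluating at $t$ gives $\operatorname{Fl}^X_{t,t_0}(\varphi^{-1}(x))=\varphi^{-1}(\operatorname{Fl}^{X_\varphi}_{t,t_0}(x))$. Applying $\varphi$ yields the claimed identity.

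For (b), let $y\in\Omega^X_{t,t_0}$ with $\operatorname{Fl}^X_{s,t_0}(y)\in U_\varphi$ for all $s\in[t_0,t]$, and let $\gamma\coloneqq\gamma_{t_0,y}$.

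\textbf{Case $t>t_0$.} The interval $I^X_{t_0,y}$ is open and contains $[t_0,t]$. The set $\gamma^{-1}(U_\varphi)$ is open in $I^X_{t_0,y}$ and contains the compact interval $[t_0,t]$, so there is $\varepsilon>0$ with $J\coloneqq\,]t_0-\varepsilon,t+\varepsilon[\,\subseteq I^X_{t_0,y}$ and $\gamma(J)\subseteq U_\varphi$. By Remark \ref{r} ((a)$\Rightarrow$(c)), $\varphi\circ\gamma|_J$ solves $y'=X_\varphi(y)$ with initial value $\varphi(y)$ at $t_0$. Maximality of the $X_\varphi$-solution gives $J\subseteq I^{X_\varphi}_{t_0,\varphi(y)}$, so $\varphi(y)\in\Omega^{X_\varphi}_{t,t_0}$ and $\operatorname{Fl}^{X_\varphi}_{t,t_0}(\varphi(y))=\varphi(\gamma(t))$, which is the claim.

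\textbf{Case $t=t_0$.} Here both flows are the identity at time $t_0$. We only need $t_0\in I^{X_\varphi}_{t_0,\varphi(y)}$, and this holds because that maximal interval is open and contains $t_0$ by definition. (Alternatively, apply the argument above on a small open interval around $t_0$.)

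The only delicate point, though minor, is that Remark \ref{r} is stated for curves on non-degenerate intervals lying entirely in the chart domain. In (b) this is why we first enlarge the compact interval $[t_0,t]$ to an open interval $J$ on which $\gamma$ stays in $U_\varphi$, using continuity of $\gamma$ and compactness. No property of rough boundaries enters beyond the existence of maximal solutions on open intervals, recalled in the Flows subsection above.
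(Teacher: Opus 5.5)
Your proof is correct and takes the same approach as the paper, whose entire proof is the remark that the lemma follows from Remark \ref{r}: you transport integral curves through $\varphi$ and invoke maximality of solutions. One small simplification: in (b) with $t>t_0$ you can apply Remark \ref{r} directly to $\gamma|_{[t_0,t]}$, since $[t_0,t]$ is already non-degenerate. This avoids the enlargement to $J$ and hence any reliance on openness of $I^X_{t_0,y}$, which is not automatic at boundary points (e.g.\ $M=[0,\infty[$ with $X\equiv 1$, starting at $0$).
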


\begin{proof}
The assertion follows from Remark \ref{r}.
\end{proof}

\begin{remark}\label{h}
In the situation of Lemma \ref{g}, we have for all $\tau>0$:
If, for each $p\in M$, there is a chart $\varphi\colon U_{\varphi}\to V_{\varphi}$ for $M$ around $p$ such that $\varphi(p)\in\Omega_{\tau,0}^{X_{\varphi}}$, then $\Omega_{\tau,0}^X=M$.
\end{remark}

\begin{lemma}\label{k}
Let $X\colon M\to TM$ be a strictly inner $C^{\infty}$-vector field on a locally polyhedral $C^{\infty}$-manifold $M$, $\tau\geq 0$ and $\text{\emph{Fl}}_{\tau,0}^X\colon\Omega^X_{\tau,0}\to M$ be the partial map belonging to the flow of $\dot{y}(t)=X(y(t))$. If there exists $x\in \Omega^X_{\tau,0}$ such that $\text{\emph{Fl}}_{\tau,0}^X(x)\in\partial M$, then $\tau=0$.
\end{lemma}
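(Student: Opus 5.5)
The plan is to reduce to Lemma \ref{f} by working in a chart around the endpoint $\text{Fl}_{\tau,0}^X(x)$. Argue by contradiction and assume $\tau>0$. Put $p\coloneqq \text{Fl}_{\tau,0}^X(x)\in\partial M$, and let $\gamma\colon[0,\tau]\to M$, $\gamma(s)\coloneqq\text{Fl}_{s,0}^X(x)$, be the solution curve; it is defined on $[0,\tau]$ because $x\in\Omega^X_{\tau,0}$ and the maximal interval $I_{0,x}$ is an interval containing $0$ and $\tau$. Choose a chart $\varphi\colon U_{\varphi}\to V_{\varphi}\subseteq P_{\varphi}$ of $M$ around $p$. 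Since $\gamma$ is continuous and $U_\varphi$ is open, there is $\delta\in\,]0,\tau]$ with $\gamma([\tau-\delta,\tau])\subseteq U_{\varphi}$.

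Next, pass to the chart and a shifted time. Set $x'\coloneqq\gamma(\tau-\delta)$. By the flow property (c) we have $x'\in\Omega^X_{\tau,\tau-\delta}$ and $\text{Fl}^X_{\tau,\tau-\delta}(x')=p$. The equation is autonomous, so $\text{Fl}^X_{s+\tau-\delta,\tau-\delta}(x')=\text{Fl}^X_{s,0}(x')$; hence $x'\in\Omega^X_{\delta,0}$, $\text{Fl}^X_{\delta,0}(x')=p$, and $\text{Fl}^X_{s,0}(x')\in U_\varphi$ for all $s\in[0,\delta]$. Lemma \ref{g}(b) now gives $\varphi(x')\in\Omega^{X_\varphi}_{\delta,0}$ and $\text{Fl}^{X_\varphi}_{\delta,0}(\varphi(x'))=\varphi(p)$.

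Finally, check the hypotheses of Lemma \ref{f} for $X_\varphi=d\varphi\circ X\circ\varphi^{-1}$ on $V_\varphi$. By Corollary \ref{e}, $X_\varphi$ is strictly inner on $V_\varphi$, since $T_q\varphi$ maps $(T^i_qM)^\circ$ onto $(T^i_{\varphi(q)}P_\varphi)^\circ$. Moreover $p\in\partial M$ means $\text{ind}_M(p)\geq 1$. This gives $\text{ind}_{P_\varphi}(\varphi(p))\ge1$, so $\varphi(p)$ lies in a proper face of $P_\varphi$, and hence $\varphi(p)\in\partial P_\varphi$. Lemma \ref{f} then forces $\delta=0$, contradicting $\delta>0$. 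Therefore $\tau=0$.

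The only delicate points are the time-shift and autonomy argument and the identification $\varphi(\partial M\cap U_\varphi)=\partial P_\varphi\cap V_\varphi$, where $\partial M$ denotes the set of points of positive index. All of these are routine.
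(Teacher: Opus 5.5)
Your proof is correct and follows essentially the same route as the paper. You take a chart around the endpoint, shift time by $\tau'=\tau-\delta$ so the trajectory stays in the chart, use Lemma \ref{g}(b) to transfer the flow to $X_\varphi$, and then apply Lemma \ref{f}; the only differences are that you shift time in $M$ before passing to the chart and you spell out why $\varphi(p)\in\partial P_\varphi$, both of which the paper leaves implicit.
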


\begin{proof}
Assuming that $\tau>0$, let us deduce that $\text{Fl}_{\tau,0}^X(x)\notin\partial M$ for each $x\in\Omega^X_{\tau,0}$. Let $\gamma\colon I\to M$ be the maximal solution to the initial value problem 
$\dot{y}(t)=X(y(t)),\ y(0)=x$
and $\varphi\colon U_{\varphi}\to V_{\varphi}\subseteq P_{\varphi}$ be a chart for $M$ around $\gamma(\tau)$. Then $\gamma^{-1}(U_{\varphi})$ is an open $\tau$-neighbourhood in $I$, so that there is $\tau'\in]0,\tau[$ with $[\tau',\tau]\subseteq\gamma^{-1}(U_{\varphi}).$
Writing $y\coloneqq \text{Fl}^X_{\tau',0}(x)$, we have $y\in\Omega^X_{\tau,\tau'}$ and $\text{Fl}^X_{s,\tau'}(y)=\text{Fl}^X_{s,0}(x)=\gamma(s)\in U_{\varphi}$ for all $s\in [\tau',\tau]$.
Lemma \ref{g} (b) entails that
\[\text{Fl}^X_{\tau,0}(x)=\text{Fl}^X_{\tau,\tau'}(y)=\varphi^{-1}\big(\text{Fl}^{X_{\varphi}}_{\tau,\tau'}(\varphi(y))\big)=\varphi^{-1}\big(\text{Fl}^{X_{\varphi}}_{\tau-\tau',0}(\varphi(y))\big),\]
where $X_{\varphi}\coloneqq d\varphi\circ X\circ\varphi^{-1}\colon V_{\varphi}\to E$ is a strictly inner vector field.
By Lemma \ref{f}, we have $\text{Fl}^{X_{\varphi}}_{\tau-\tau',0}(\varphi(y)\big)\notin\partial P_{\varphi},$ hence $\text{Fl}^X_{\tau,0}(x)\notin\partial M$.
\end{proof}

\begin{definition}
Let $\widetilde{M}$ be a finite-dimensional $C^{\infty}$-manifold modeled on $E$. We call a subset $M\subseteq \widetilde{M}$ a \emph{locally polyhedral full submanifold} if, for each $x\in M$, there exists a chart $\varphi\colon U_{\varphi}\to V_{\varphi}\subseteq E$ for $\widetilde{M}$ around $x$ such that $\varphi(U_{\varphi}\cap M)\subseteq P_{\varphi}$ is an open subset of a polytope $P_{\varphi}\subseteq E$ with non-empty interior. We say that $\varphi$ is \emph{polyhedral adapted to $M$}.
\end{definition}

Note that $M$ is a locally polyhedral $C^{\infty}$-manifold with the maximal polyhedral smooth atlas containing the charts $\varphi\vert_{U_{\varphi}\cap M}\colon U_{\varphi}\cap M\to\varphi(U_{\varphi}\cap M)$.

\begin{proposition}\label{j}
Let $X\colon M\to TM$ be a strictly inner $C^{\infty}$-vector field on a compact locally polyhedral $C^{\infty}$-manifold $M$.
Then there exists $\tau>0$ such that the partial map $\text{\emph{Fl}}_{\tau,0}^X$ belonging to the flow of $\dot{y}(t)=X(y(t))$ is defined on all of $M$ and $\text{\emph{Fl}}_{\tau,0}^X(M)$ is a locally polyhedral full submanifold of $M\setminus\partial M$.
\end{proposition}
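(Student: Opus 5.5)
We split the argument into two parts: global existence of the flow up to a small time $\tau>0$, and then the local polyhedral structure of the image.

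\textbf{Existence of the flow.} We use compactness together with Remark \ref{h}. For each $p\in M$ we choose a chart $\varphi_p\colon U_p\to V_p\subseteq P_p$ around $p$ and a compact neighbourhood $K_p\subseteq U_p$ of $p$. The local field $X_{\varphi_p}=d\varphi_p\circ X\circ\varphi_p^{-1}$ is strictly inner on $V_p$. We then want some $\tau_p>0$ such that every point of $\varphi_p(K_p)$ lies in $\Omega^{X_{\varphi_p}}_{\tau_p,0}$.

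The one risk is that the local solution leaves $V_p$ through the boundary of $P_p$. This cannot happen in forward time. To see it, extend $X_{\varphi_p}$ smoothly to an open neighbourhood of $V_p$ in $E$. The extended flow exists on $[0,\tau_p]$ for points near $\varphi_p(K_p)$, and it stays in a compact set inside $\varphi_p(U_p)\cup(E\setminus P_p)$ for small $\tau_p$. By Lemma \ref{f} (or directly, since $\lambda_j(X)<0$ on active faces), a trajectory starting in $P_p$ cannot leave $P_p$ for small positive times: the first exit time would occur at a boundary point with outward or tangent velocity. Hence the trajectory stays in $V_p$, and it solves the original equation.

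Finitely many $K_p$ cover $M$. Let $\tau_0$ be the minimum of the corresponding $\tau_p$, and pick any $\tau\in\,]0,\tau_0]$. Then Remark \ref{h} gives $\Omega^X_{\tau,0}=M$.

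\textbf{Properties of the image.} Lemma \ref{k} gives $\text{Fl}^X_{\tau,0}(M)\subseteq M\setminus\partial M$. Set $N\coloneqq M\setminus\partial M$. This is an open subset of $M$ and a manifold without boundary. By the flow properties, $\text{Fl}^X_{\tau,0}\colon M=\Omega_{\tau,0}\to\Omega_{0,\tau}$ is a $C^\infty$-diffeomorphism onto the open set $\Omega_{0,\tau}\subseteq M$, and its image lies in $N$.

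Fix $q=\text{Fl}_{\tau,0}(x)$ and a chart $\psi\colon U_\psi\to V_\psi\subseteq P_\psi$ around $x$. Consider
\[\kappa\coloneqq\psi\circ\text{Fl}_{0,\tau}\colon \text{Fl}_{\tau,0}(U_\psi)\to V_\psi .\]
This map is defined on the open subset $\text{Fl}_{\tau,0}(U_\psi)\cap N$ of $N$, but we need it on a full neighbourhood of $q$ in $N$, and this is the step I expect to be the main obstacle. The backward flow $\text{Fl}_{0,\tau}$ is defined on $\Omega_{0,\tau}$, which is open in $M$ and contains $q$. So $W\coloneqq\Omega_{0,\tau}\cap N\cap\text{Fl}_{\tau,0}(U_\psi)$ is an open neighbourhood of $q$ in $N$, since $\text{Fl}_{\tau,0}(U_\psi)$ is open in $\Omega_{0,\tau}$ by (b).

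On $W$, the map $\text{Fl}_{0,\tau}$ is a diffeomorphism onto an open subset of $M$. Because $W$ lies in the interior of $M$, we can enlarge the target. We fix a chart $\psi$ whose image $V_\psi$ has been extended: at interior-valued points, $\text{Fl}_{0,\tau}$ maps $W$ into $U_\psi\subseteq M$, and points of $W$ backward-flowing out of $M$ are excluded from $\Omega_{0,\tau}$. To obtain a chart of $N$ whose image is an open subset of $E$, extend $\psi$ to an ambient chart as follows. Take a smooth extension $\widetilde X$ of $X_\psi$ near $V_\psi$ in $E$, and an open set $\widetilde V\supseteq V_\psi$ in $E$ on which the $\widetilde X$-flow $\widetilde{\text{Fl}}_{\tau,0}$ is a diffeomorphism onto its open image. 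The shrinking at the end of the first part makes this possible.

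Then $\phi\coloneqq(\widetilde{\text{Fl}}_{\tau,0})^{-1}$ is defined near $\psi(q')$, where $\psi$ is now taken around $q$ after composing with $\text{Fl}_{\tau,0}$ locally. It gives a chart of the interior manifold $N$ around $q$, with open image in $E$, and it satisfies $\phi(\text{Fl}_{\tau,0}(M)\cap\mathrm{dom})=\text{open}\cap P_\psi$. Conjugating by the flow shows that $\text{Fl}_{\tau,0}(M)$ corresponds under this chart to an open subset of the polytope $P_\psi$. Hence the chart is polyhedral adapted to $\text{Fl}_{\tau,0}(M)$ in $N$.

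The delicate point throughout is to keep all extended flows uniformly defined. We handle this by choosing $\tau$ small relative to the finitely many compact chart pieces, just as in the first part.
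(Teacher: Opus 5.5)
Your first part is sound. You extend $X_{\varphi_p}$ to an open set $G\subseteq E$ with $G\cap P_p=V_p$, keep the extended trajectories starting near $\varphi_p(K_p)$ inside $G$ for short times, and argue at a putative exit point in $\partial P_p\cap G=\partial P_p\cap V_p$, where the field is strictly inner. This gives uniform existence, and Remark~\ref{h} finishes.

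The second part has a genuine gap.

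First, the claim that $\text{Fl}^X_{\tau,0}$ maps $M$ diffeomorphically onto an \emph{open} subset $\Omega_{0,\tau}$ of $M$ cannot hold here. Since the image lies in $N$, it would be an open subset of $N$, i.e.\ a manifold without boundary diffeomorphic to $M$, which is absurd if $\partial M\neq\emptyset$. For a strictly inner field, maximal solutions through boundary points live on half-open intervals. For instance, with $M=[0,1]$ and $X(y)=\frac{1}{2}-y$ one gets $\Omega_{0,\tau}=\text{Fl}_{\tau,0}([0,1])$, a compact proper subinterval. So for $x\in\partial M$ neither $\text{Fl}_{\tau,0}(U_\psi)$ nor your set $W$ is a neighbourhood of $q$ in $N$, and the construction via $\kappa$ must be dropped.

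What survives is your final idea, which is the paper's: a chart of $N$ of the form $(\widetilde{\text{Fl}}_{\tau,0})^{-1}\circ\psi$. To make it a chart around $q$:
\begin{itemize}
\item take $\psi=\varphi_p$ with $x\in K_p$, since an arbitrary chart around $x$ need not contain $q$;
\item let $\widetilde{\text{Fl}}_{\tau,0}$ act on an open $E$-neighbourhood $\widetilde Q$ of $\varphi_p(K_p)$ (not of all of $V_\psi$) whose trajectories stay in $G$ up to time $\tau$;
\item use the domain $\psi^{-1}(\widetilde{\text{Fl}}_{\tau,0}(\widetilde Q)\cap P_\psi^\circ)$. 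It contains $q$ because $\psi(q)=\widetilde{\text{Fl}}_{\tau,0}(\psi(x))$ by your first part and $\psi(q)\in P_\psi^\circ$ by Lemma~\ref{k}.
\end{itemize}
The paper obtains the same via the estimate $\lambda_j(Y)<-\theta$, which yields $\text{Fl}^Y_{\tau,0}(Q_\tau)\subseteq P^\circ$.

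The decisive step is the identity $\phi(\text{Fl}^X_{\tau,0}(M)\cap\text{dom}\,\phi)=\phi(\text{dom}\,\phi)\cap P_\psi$. You only assert it (``conjugating by the flow''), but it is the whole content of the proposition (the identity \eqref{gl.b} in the paper).

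The inclusion ``$\supseteq$'' does follow from your exit-time argument. For $z\in\phi(\text{dom}\,\phi)\cap P_\psi$ the extended trajectory stays in $V_\psi$, so $\phi^{-1}(z)=\psi^{-1}(\widetilde{\text{Fl}}_{\tau,0}(z))=\text{Fl}^X_{\tau,0}(\psi^{-1}(z))$.

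The inclusion ``$\subseteq$'' is not formal. A point $z\notin P_\psi$ can have an extended trajectory $\gamma$ that enters $P_\psi^\circ$. A priori its endpoint might equal $\psi(\text{Fl}^X_{\tau,0}(m))$ for some $m\in M$ far away from $U_\psi$, and conjugation inside the chart says nothing about such $m$. The paper excludes this as follows:
\begin{itemize}
\item Let $t_0$ be the last time in $[0,\tau]$ with $\gamma(t_0)\in\partial P_\psi$. By connectedness and $\gamma([0,\tau])\subseteq G$, the curve runs in $V_\psi$ on $[t_0,\tau]$ and solves $X_\psi$ there.
\item Hence $\text{Fl}^X_{\tau,t_0}(\psi^{-1}(\gamma(t_0)))=\text{Fl}^X_{\tau,0}(m)=\text{Fl}^X_{\tau,t_0}(\text{Fl}^X_{t_0,0}(m))$.
\item Injectivity of $\text{Fl}^X_{\tau,t_0}$ gives $\text{Fl}^X_{t_0,0}(m)=\psi^{-1}(\gamma(t_0))\in\partial M$.
\item Lemma~\ref{k} then forces $t_0=0$, so $z=\gamma(0)\in P_\psi$, a contradiction.
\end{itemize}
A continuation argument would also work: use backward uniqueness for the smooth extended field on $G$ to compare $\psi(\text{Fl}^X_{s,0}(m))$ with $\gamma(s)$ on a maximal interval $]t^*,\tau]$, and use $G\cap P_\psi=V_\psi$ to close it up. Without some such argument, you have not shown that $\text{Fl}^X_{\tau,0}(M)$ meets your chart domain only in the part corresponding to $P_\psi$.
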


\begin{proof}
Given $p\in  M$, we choose a chart $\varphi\coloneqq\varphi_p\colon U\to V\subseteq P$ for $M$ around $p$, where $P\subseteq E$ is a polytope with non-empty interior, say $P=\bigcap_{j=1}^m \lambda_j^{-1}(]-\infty,a_j])$ with continuous linear functionals $\lambda_1,\ldots,\lambda_m\neq 0$ on $E$ and $a_1,\ldots,a_m\in\R$.
Let $z\coloneqq\varphi(p)$, $J\coloneqq\{j\in\{1,\ldots,m\}\colon \lambda_j(z)=a_j\}$ and $W\subseteq E$ be an open subset such that $V=W\cap P$. According to Whitney's extension theorem (see \cite[Satz 3.1]{J}), the function $X_{p}\coloneqq d\varphi\circ X\circ\varphi^{-1}\colon V\to E$ has a smooth extension on $W$, i.e. a smooth function $Y\colon W\to E$ with $Y\vert_{V}=X_{p}$. Since $X(p)\in (T_p^iM)^{\circ}$ and therefore $Y(z)\in (T_z^i P)^{\circ}$, we have $\lambda_j(Y(z))<0$ for all $j\in J$. Thus there is $\theta>0$ such that $\lambda_j(Y(z))<-\theta$ for all $j\in J$. We define
\[O\coloneqq\bigg\{x\in W\colon
\begin{array}{l}
\lambda_j(Y(x))<-\theta
\\
\lambda_j(x)<a_j\quad
\end{array}
\begin{array}{l}
\text{for all }j\in J\\
\text{for all }j\in \{1,\ldots,m\}\setminus J
\end{array}
\bigg\},\]
which is an open $z$-neighbourhood in $W$. Since $P$ is locally compact, we find a compact subset $K\coloneqq K_p\subseteq V\cap O$ such that $z$ is in the interior $K^{\circ}$ of $K$ relative $P$.
The flow $\text{Fl}^{Y\vert_O}\colon\Omega^{Y\vert_O}\to O$ of the vector field $Y\vert_O\colon O\to E$ has an open
domain $\Omega^{Y\vert_O}\subseteq\R\times\R\times O$ 
and $\{0\}\times\{0\}\times K\subseteq \Omega^{Y\vert_O}$.
Using the Wallace Lemma (see \cite[A.4.3]{GN}), we find $\eps_p>0$ and an open subset $Q\subseteq O$ with $K\subseteq Q$ such that $]-\eps_p,\eps_p[\times\{0\}\times Q\subseteq\Omega^{Y\vert_O}.$ Applying the Wallace Lemma again to the relative open domain $\Omega^{X_{p}}\subseteq\R\times\R\times V$ of the flow $\text{Fl}^{X_{p}}\colon\Omega^{X_{p}}\to V$ belonging to $X_{p}$, we can, after shrinking $\eps_p$ and $Q$, achieve that $]-\eps_p,\eps_p[\times\{0\}\times (Q\cap P)\subseteq \Omega^{X_{p}}$.

Since the sets $\varphi_p^{-1}((K_p)^{\circ})$ for $p\in M$ form an open cover of $M$, there are $p_1,\ldots,p_l\in M$ such that
$M=\varphi_{p_1}^{-1}(K_{p_1})\cup\ldots\cup \varphi_{p_l}^{-1}(K_{p_l})$.
Let $\tau>0$ such that $\tau<\text{min}\{\eps_{p_1},\ldots,\eps_{p_l}\}$. Then $K_p\subseteq\Omega_{\tau,0}^{X_{p}}$ for all $p\in\{p_1,\ldots,p_l\}.$ Remark \ref{h} implies that $\text{Fl}_{\tau,0}^X$ is defined on all of $M$.

In the situation as before, let us assume that $p\in\{p_1,\ldots,p_l\}$. We define
\[Q_{\tau}\coloneqq Q_{p,\tau}\coloneqq\Big\{x\in Q\colon\lambda_j(x)<a_j+\frac{\theta}{2}\tau\quad\text{for all } j\in J\Big\},\]
which is an open subset of $Q$ with $K\subseteq Q_{\tau}$.
We want to show that \[S_{\tau}\coloneqq S_{p,\tau}\coloneqq\text{Fl}_{\tau,0}^Y (Q_{\tau})\subseteq P^{\circ}.\]
Let $x\in Q_{\tau}$. Since the curve
\begin{equation}\label{gl.a}
\gamma\colon]-\eps,\eps[\to O,\quad \gamma(t)\coloneqq\text{Fl}_{t,0}^Y(x)
\end{equation}
solves the differential equation $y'(t)=Y(y(t))$,
it follows that $(\lambda_j\circ\gamma)'(t)=\lambda_j(\gamma'(t))=\lambda_j(Y(\gamma(t)))<-\theta$ for all $j\in J$ and $t\in]-\eps,\eps[$.
Together with $\lambda_j(\gamma(0))=\lambda_j(x)<a_j+\frac{\theta}{2}\tau$, we deduce that
\[\lambda_j(\gamma(\tau))=\int_{0}^{\tau}(\lambda_j\circ\gamma)'(t)\ dt+\lambda_j(\gamma(0))<-\theta\tau+a_j+\frac{\theta}{2}\tau=a_j-\frac{\theta}{2}\tau<a_j\]
for all $j\in J$.
We also have $\lambda_j(\gamma(\tau))< a_j$ for all $j\in\{1,\ldots,m\}\setminus J$, hence $\text{Fl}_{\tau,0}^Y(x)=\gamma(\tau)\in P^{\circ}$, which proves that $S_{\tau}\subseteq P^{\circ}$.

The map $\text{Fl}_{\tau,0}^Y\colon\Omega^Y_{\tau,0}\to \Omega^Y_{0,\tau}$ being a $C^{\infty}$-diffeomorphism between open subsets of $E$, it restricts to a $C^{\infty}$-diffeomorphism $\psi\coloneqq \text{Fl}_{\tau,0}^Y\vert_{Q_{\tau}}^{S_{\tau}}\colon Q_{\tau}\to S_{\tau}$. Our goal is to show that
\begin{equation}\label{gl.b}
\psi(Q_{\tau}\cap P)=\varphi\big(\text{Fl}_{\tau,0}^X(M)\cap\varphi^{-1}(S_{\tau})\big).
\end{equation}
If $y\in \psi(Q_{\tau}\cap P)$, say $y=\text{Fl}_{\tau,0}^Y(x)$ for some $x\in Q_{\tau}\cap P$, we obtain $(\tau,0,x)\in\Omega^{X_{p}}$ and
\[y=\text{Fl}_{\tau,0}^Y(x)=\text{Fl}_{\tau,0}^{X_{p}}(x)=\varphi(\text{Fl}_{\tau,0}^X(\varphi^{-1}(x))),\]
where $\text{Fl}_{\tau,0}^X(\varphi^{-1}(x))\in\text{Fl}_{\tau,0}^X(M)\cap\varphi^{-1}(S_{\tau})$. 
If, conversely, $y\in \varphi(\text{Fl}_{\tau,0}^X(M)\cap\varphi^{-1}(S_{\tau})),$ we have $y=\text{Fl}_{\tau,0}^Y(x)$ for some $x\in Q_{\tau}$ and $y=\varphi(\text{Fl}_{\tau,0}^X(q))$ for some $q\in M$. It remains to show that $x\in P$. Let us assume that $x\notin P$ to derive a contradiction. Considering the curve $\gamma$ defined as in \eqref{gl.a}, we have $\gamma(0)=x\in E\setminus P$ and $\gamma(\tau)=y\in P^{\circ}.$ Since $\gamma([0,\tau])$ is connected, it cannot be true that $\gamma([0,\tau])\subseteq (E\setminus P)\cup P^{\circ}$. Hence there exists $t\in[0,\tau]$ such that $\gamma(t)\in\partial P$. Let $t_0$ be the maximum of such $t$. Then $\gamma([t_0,\tau])\subseteq V$ (otherwise, if there would be $s\in]t_0,\tau]$ with $\gamma(s)\notin V$ and thus $\gamma(s)\notin P$, we could find some $t\in[s,\tau]$ with $\gamma(t)\in\partial P$, a contradiction to the maximality of $t_0$).
Writing $v\coloneqq\gamma(t_0)=\text{Fl}_{t_0,0}^Y(x)\in\partial P$ and $r\coloneqq \text{Fl}_{t_0,0}^X(q)$,
we obtain
\[y=\text{Fl}^Y_{\tau,0}(x)=\text{Fl}^Y_{\tau,t_0}(v)=\text{Fl}^{X_{p}}_{\tau,t_0}(v)=\varphi(\text{Fl}^X_{\tau,t_0}(\varphi^{-1}(v)))\]
as well as
\[y=\varphi(\text{Fl}_{\tau,0}^X(q))=\varphi(\text{Fl}_{\tau,t_0}^X(r)).\]
It follows that $\varphi^{-1}(v)=r$, the maps $\varphi$ and $\text{Fl}_{\tau,t_0}^X$ being injective. Since $v\in\partial P$, we have $r\in\partial M$. But Lemma \ref{k} leads to $t_0=0$, hence $x=\text{Fl}_{0,0}^Y(x)=v\in P$, a contradiction. We deduce that $y\in \psi(Q_{\tau}\cap P)$, thus \eqref{gl.b} holds.

As a consequence, the map
\[\psi^{-1}\circ\varphi\vert_{\varphi^{-1}(S_{\tau})}\colon \varphi^{-1}(S_{\tau})\to Q_{\tau}\]
is a chart for $M\setminus\partial M$ which is polyhedral adapted to $\text{Fl}_{\tau,0}^X(M)$. 
Since \[\text{Fl}_{\tau,0}^X(M)\subseteq \varphi_{p_1}^{-1}(S_{p_1,\tau})\cup\ldots\cup \varphi_{p_l}^{-1}(S_{p_l,\tau}),\]
we conclude that $\text{Fl}_{\tau,0}^X(M)$ is a locally polyhedral full submanifold of ${M\setminus\partial M}$.
\end{proof}

\begin{lemma}\label{l}
For every compact locally polyhedral $C^{\infty}$-manifold $M$, the ma\-ni\-fold $M\setminus\partial M$ is $\sigma$-compact.
\end{lemma}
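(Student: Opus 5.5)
To prove the lemma, I will write $M\setminus\partial M$ as a countable union of compact subsets of itself. First I will check that $\partial M$ is closed in $M$. By the paragraph on $\text{ind}_M$, we have $\partial M=\bigcup_{i\geq 1}\partial_iM=\{x\in M\colon \text{ind}_M(x)\geq 1\}$. In a chart $\varphi\colon U_{\varphi}\to V_{\varphi}\subseteq P_{\varphi}$ this corresponds to $\partial P_{\varphi}\cap V_{\varphi}$, which is relatively closed in $V_{\varphi}$. Hence $M\setminus\partial M$ is open in $M$, and it is a manifold without boundary.

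The cleanest route uses the strictly inner flow constructed above. By Proposition \ref{i}, $M$ admits a strictly inner $C^{\infty}$-vector field $X$; note that $M$ is compact Hausdorff and hence paracompact. Proposition \ref{j} then provides some $\tau>0$ with $\text{Fl}^X_{\tau,0}$ defined on all of $M$. By \ref{gl.m}-type flow facts (b) and (c), for every $t\in\,]0,\tau]$ the map $\text{Fl}^X_{t,0}$ is defined on $M$, since $\Omega_{t,0}\supseteq\Omega_{\tau,0}$ along trajectories. For $k\in\N$ I set $K_k\coloneqq \text{Fl}^X_{\tau/k,0}(M)$. Each $K_k$ is compact, being a continuous image of a compact space, and Lemma \ref{k} gives $K_k\subseteq M\setminus\partial M$.

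It remains to show $\bigcup_k K_k= M\setminus\partial M$. Take $y\in M\setminus\partial M$. In a chart around $y$, the point $y$ lies in the interior of the polytope, so the backward flow $\text{Fl}^X_{0,t}(y)$ exists in $M$ for all small $t>0$ by local existence for ODEs in an open subset of $E$. For $t=\tau/k$ with $k$ large, put $x\coloneqq\text{Fl}^X_{0,t}(y)$. Then $\text{Fl}^X_{t,0}(x)=y$ by flow fact (c), so $y\in K_k$.

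I expect the main obstacle to be this last step: ensuring that the backward trajectory stays in $M$ and that the flow facts apply with the same maximal flow on $M$. The interior of the chart image is open in $E$, so the standard theory applies there, and Lemma \ref{g}(a) transfers the local solution back to $M$. As an alternative that avoids flows, I could work with the compact sets $\{x\in M\colon d(x,\partial M)\geq 1/k\}$ for a metric $d$ on the compact metrizable space $M$. Their union is $M\setminus\partial M$ because $\partial M$ is closed, and each is closed in $M$, hence compact. This argument only needs metrizability, which follows from $M$ being compact Hausdorff and locally Euclidean-like (second countable via a finite atlas of charts into $E$). I would in fact present this metric argument as the main proof since it is shorter, keeping the flow argument as a remark.
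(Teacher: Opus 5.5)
Your proof is correct, but it takes a different route from the paper. The paper argues directly in charts. It covers $M$ by finitely many sets $\varphi_{p_i}^{-1}(K_{p_i})$, where each $K_p$ is a compact polytope inside a chart image. It then exhausts the interior of each $K_p$ by the shrunken polytopes $K_{p,l}$, obtained by replacing each $a_j$ with $a_j-\frac{1}{l}$.

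Your main (metric) argument instead reduces the lemma to general topology. Since $\partial M$ is closed and $M$ is compact metrizable, the open set $M\setminus\partial M$ is the union of the compact sets $\{x\colon d(x,\partial M)\geq 1/k\}$. You derive metrizability from the finite atlas (second countability plus compact Hausdorff, then Urysohn) rather than quoting Lemma \ref{q}. That is the right choice: the paper proves Lemma \ref{q} via Proposition \ref{b}, whose proof uses Lemma \ref{l}, so citing it would be circular.

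Your route is shorter and uses nothing about the polyhedral structure beyond the closedness of $\partial M$ and the finite atlas. It also sidesteps a subtlety in the paper's argument. There, $\bigcup_l K_{p,l}$ is the interior of $K_p$ in $E$, not $K_p\setminus\partial P_p$. So a point of $M\setminus\partial M$ whose chart images lie on $\partial K_{p_i}$ for every relevant $i$ would be missed. To close this, one must cover $M$ by the preimages of the relative interiors of the $K_p$ in $P_p$. The paper's approach, in turn, is concrete and stays entirely inside the polyhedral chart picture.

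Your flow variant is also valid, but it is much heavier, since it relies on Propositions \ref{i}, \ref{j} and Lemma \ref{k}. For its backward step, argue with Remark \ref{r} and maximality of solutions rather than Lemma \ref{g}, which is stated only for $t\geq t_0$.
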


\begin{proof}
Given $p\in M$, let $\varphi_p\colon U_p\to V_p\subseteq P_p$ be a chart for $M$ around $p$, where $P_p\subseteq E$ is a polytope with non-empty interior, and $K_p$ be a compact neighbourhood of $\varphi_p(p)$ in $V_p$. After shrinking $K_p$, we may assume that $K_p$ itself is a polytope with non-empty interior, say
$K_p=\bigcap_{j=1}^m \lambda_j^{-1}(]-\infty,a_j])$ with continuous linear functionals $\lambda_1,\ldots,\lambda_m\neq 0$ on $E$ and $a_1,\ldots,a_m\in\R$. If we set $K_{p,l}\coloneqq\bigcap_{j=1}^m \lambda_j^{-1}(]-\infty,a_j-\frac{1}{l}])$ for $l\in\N$, we obtain $K_p\setminus\partial P_p=\bigcup_{l\in\N}K_{p,l}$. Since $M$ is compact, there are $p_1,\ldots,p_k\in M$ such that $M=\bigcup_{i=1}^k\varphi_{p_i}^{-1}(K_{p_i}).$ Consequently,
\[M\setminus\partial M=\bigcup_{i=1}^k\bigcup_{l\in\N}\varphi_{p_i}^{-1}(K_{p_i,l})\]
is a countable union of compact subsets.
\end{proof}

\begin{proof}[\bf Proof of Proposition \ref{b}]
By Proposition \ref{i}, there is a strictly inner $C^{\infty}$-vector field $X\colon M\to TM$. Using Proposition \ref{j}, we find $\tau>0$ such that the partial map $\text{Fl}_{\tau,0}^X$ belonging to the flow of $\dot{y}(t)=X(y(t))$ is defined on all of $M$ and $\text{Fl}_{\tau,0}^X(M)$ is a locally polyhedral full submanifold of $M\setminus\partial M$. 
We choose a bijection $f\colon M\setminus\partial M\to N$ onto a set $N$ such that $N\cap M=\emptyset$ and define $\widetilde{M}\coloneqq(N\setminus f(\text{Fl}_{\tau,0}^X(M)))\cup M$. Then the map
\begin{align*}
g\colon M\setminus\partial M\to \widetilde{M},\quad x\mapsto 
\begin{cases}
f(x), &\text{ if }x\notin \text{Fl}_{\tau,0}^X(M)\\
(\text{Fl}_{\tau,0}^X)^{-1}(x), &\text{ if }x\in \text{Fl}_{\tau,0}^X(M)
\end{cases}
\end{align*}
is a bijection. We endow $\widetilde{M}$ with the smooth manifold structure turning $g$ into a $C^{\infty}$-diffeomorphism. 
Since $\text{Fl}_{\tau,0}^X\colon M\to M\setminus\partial M$ is a $C^{\infty}$-diffeomorphism onto its image, also the inclusion map $g\circ \text{Fl}_{\tau,0}^X\colon M\to \widetilde{M},\ x\mapsto x$ is one. Moreover, $M=g(\text{Fl}_{\tau,0}^X(M))\subseteq \widetilde{M}$ is a locally polyhedral full submanifold of $\widetilde{M}$.
According to Lemma \ref{l}, $M\setminus\partial M$ is $\sigma$-compact and hence also $\widetilde{M}$.
\end{proof}

\section{The special case of a polytope}

In this chapter, we have a look on Proposition \ref{c} in the case that $M=P\subseteq E$ is a polytope with non-empty interior. 

Given $k\in\N\cup\{\infty\}$, let $\text{Diff}^k_{\text{fr}}(P)\subseteq \text{Diff}^k(P)$ be the subgroup of all $C^k$-diffeomorphisms $f\in \text{Diff}^k(P)$ which are \emph{face respecting} in the sense that $f(F)=F$ for all faces $F\subseteq P$. We endow $\text{Diff}^k(P)$ and $\text{Diff}^k_{\text{fr}}(P)$ with the topology induced by $C^k(P,E)$.

Let $\text{pr}_i\colon E\times E\to E$ be the projection onto the $i$th component for $i\in\{1,2\}$. If $\Sigma\colon D\to E$ is a local addition for $E$ and $\theta\colon D\to D',\ (x,v)\mapsto (x,\Sigma(x,v))$ the associated $C^{\infty}$-diffeomorphism (with open subsets $D\subseteq TE=E\times E$ and $D'\coloneqq (\text{pr}_1,\Sigma)(D)\subseteq E\times E$), then the subsets
\[O\coloneqq\{\tau\in C^1(P,E)\colon (\text{id}_P,\tau)(P)\subseteq D\}\] and 
\[O'\coloneqq\{g\in C^1(P,E)\colon (\text{id}_P,g)(P)\subseteq D'\}\]
are open in $C^1(P,E)$ and the map
\[\Psi\colon O\to O',\quad \tau\mapsto\Sigma\circ(\text{id}_P,\tau)\]
is a $C^{\infty}$-diffeomorphism with inverse 
$\Phi\colon O'\to O,\ g\mapsto\text{pr}_2\circ\theta^{-1}\circ(\text{id}_P,g),$
which restricts to a $C^{\infty}$-diffeomorphism
\[\Psi\vert_{O\cap C^k(P,E)}\colon O\cap C^k(P,E)\to O'\cap C^k(P,E)\]
for all $k\in\N\cup\{\infty\}.$
Our goal is to establish the following assertion:

\begin{proposition}\label{m}
Let $P\subseteq E$ be a polytope with non-empty interior, $X\colon TE\to T(TE)$ be a spray on $E$ such that $X(T\partial_iP)\subseteq T(T\partial_iP)$ for all $i\in\{0,\ldots,n\}$ and $\Sigma\colon D\to E$ be the associated local addition for $E$. Then there exists an open $0$-neighbourhood $Q\subseteq C^1(P,E)$ such that $\Sigma\circ(\text{\emph{id}}_P,\tau)$ is well-defined for all $\tau\in Q$ and
\begin{align}\label{gl.u}
\Psi(Q\cap C^k_{\text{\emph{str}}}(P,E))=\Psi(Q)\cap\text{\emph{Diff}}^k(P)\quad \text{for all }k\in\N\cup\{\infty\}.
\end{align}
Moreover, $\text{\emph{Diff}}^k(P)$ is a submanifold of $C^k(P,E)$ modeled on $C^k_{\text{\emph{str}}}(P,E)$.
\end{proposition}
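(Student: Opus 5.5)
I will prove the two inclusions in \eqref{gl.u} separately, after choosing $Q$ small enough in the $C^1$-topology. First, $Q$ will be an open $[0,1]$-saturated $0$-neighbourhood in $C^1(P,E)$ such that for every $\tau\in Q$:
\begin{itemize}
\item[(i)] $(\text{id}_P,\tau)(P)\subseteq D$, and $s\tau\in Q$ for all $s\in[0,1]$;
\item[(ii)] $\Psi(\tau)=\Sigma\circ(\text{id}_P,\tau)$ is $C^1$-close to $\text{id}_P$, so that its derivative is invertible at every point of $P$ and $\Psi(\tau)$ is injective on $P$.
\end{itemize}
Injectivity near the identity follows from a uniform estimate $\|\Psi(\tau)(x)-\Psi(\tau)(y)-(x-y)\|\le\frac12\|x-y\|$. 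This estimate holds on the convex set $P$ by the mean value theorem once $\|d\Psi(\tau)-\text{id}\|_\infty<\frac12$.

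For the inclusion "$\subseteq$", let $\tau\in Q\cap C^k_{\text{str}}(P,E)$ and put $g\coloneqq\Psi(\tau)$.
\begin{itemize}
\item[(a)] \emph{$g$ preserves faces.} For $x\in\text{algint}(F)$ we have $\tau(x)\in T_x\partial_iP$, the direction space of $\text{aff}(F)$, by Remark~\ref{ii}. The invariance hypothesis $X(T\partial_iP)\subseteq T(T\partial_iP)$ says the spray restricts to the stratum $\partial_iP$, which is an open subset of the affine space $\text{aff}(F)$ near $x$. Uniqueness of solutions then keeps the geodesic with initial velocity $\tau(x)$ inside $\text{aff}(F)$, as long as it stays in the stratum. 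Lemma~\ref{p}, applied to the scaled vectors $s\tau(x)$ with $U$ an open subset of $E$ whose intersection with $\text{aff}(F)$ lies in $\partial_iP$, gives $g(\text{algint}(F))\subseteq\text{aff}(F)$.
\item[(b)] \emph{$g$ maps $P$ into $P$.} I would argue by continuity in $s\in[0,1]$ using $g_s\coloneqq\Psi(s\tau)$, which also lies in $\Psi(Q)$. Each $g_s$ maps every face into its affine hull. By induction on the dimension of faces, together with injectivity and local invertibility (invariance of domain inside each $\text{aff}(F)$), $g_s(F)=F$ holds for all faces $F$. For vertices this is immediate. For a face $F$, $g_s$ maps $\partial F$ onto $\partial F$, so $g_s|_F\colon F\to\text{aff}(F)$ is an injective local homeomorphism fixing $\partial F$; a degree or connectedness argument then gives $g_s(F)=F$.
\item[(c)] \emph{$g$ is a diffeomorphism.} Applying (b) with $F=P$ shows $g$ is a bijection $P\to P$. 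Its inverse is $C^k$ by the inverse function theorem for maps on locally convex sets with dense interior, since the derivative is invertible everywhere. Hence $g\in\text{Diff}^k(P)$.
\end{itemize}

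For the inclusion "$\supseteq$", let $g=\Psi(\tau)\in\text{Diff}^k(P)$ with $\tau\in Q$. A diffeomorphism preserves the index, so $g(\partial_iP)=\partial_iP$. Since $g$ is $C^0$-close to the identity and the connected components $\text{algint}(F)$ of $\partial_iP$ are separated, I would shrink $Q$ so that $g(\text{algint}(F))=\text{algint}(F)$, i.e.\ $g$ is face respecting. Now fix $x\in\text{algint}(F)$. The point $g(x)$ lies in $\text{aff}(F)$ near $x$. Because the spray is tangent to the stratum, the local addition restricted to $T\,\text{algint}(F)$ is a local addition of the stratum. Therefore $g(x)=\Sigma(x,w)$ for some $w$ tangent to $\text{aff}(F)$ with $w$ small. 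Injectivity of $\theta$ on $D$ forces $\tau(x)=w$, and $w$ lies in the direction space of $\text{aff}(F)$. Hence $\tau$ is stratified by Remark~\ref{ii}. This step requires $Q$ small enough that the relevant vectors lie in the region where the restricted exponential is a diffeomorphism. Arranging this uniformly over the finitely many faces, using compactness of $P$, is a technical but routine point.

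The submanifold statement then follows. $\Phi|_{\Psi(Q)}$ is a chart of $C^k(P,E)$ sending $\text{Diff}^k(P)\cap\Psi(Q)$ onto $Q\cap C^k_{\text{str}}(P,E)$, and $C^k_{\text{str}}(P,E)$ is a closed subspace. The paper appears to use only this chart around $\text{id}_P$ and not translation to other points; if translation were needed, one would use right composition with $h\in\text{Diff}^k(P)$, which preserves $C^k(P,E)$ and is smooth there for $k=\infty$. The main obstacle I expect is step (b), showing $g(P)=P$ and not merely $g(P)\subseteq P$ or $g(F)\subseteq\text{aff}(F)$. My first attempt would be the homotopy $s\mapsto g_s$ combined with face-by-face induction via invariance of domain.
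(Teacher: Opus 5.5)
\textbf{There is a genuine gap in step (a), and (a), not (b), is the real obstacle.} To apply Lemma~\ref{p} you need $\Sigma(x,s\tau(x))\in U$ for all $s\in[0,1]$, where $U\cap\text{aff}(F)\subseteq\partial_iP$. Since $\text{aff}(F)\cap P=F$, this says $U\cap\text{aff}(F)\subseteq\text{algint}(F)$. So the hypothesis you need is exactly that the geodesic never meets $\text{aff}(F)\setminus\text{algint}(F)$, which is what (a) is supposed to prove.

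Your choice of $Q$ does not supply this. For $x$ near the relative boundary $\partial F$, the stratified vector $\tau(x)$ is nearly tangent to the adjacent lower face but is not small compared with $\text{dist}(x,\partial F)$. So $C^1$-smallness does not confine the geodesic. Once the geodesic leaves $F$, the spray need no longer be tangent to $\text{aff}(F)$, because the invariance holds only over $TF$ (cf.\ Lemma~\ref{u}). Hence (a) is circular as written, and (b), which takes (a) as input, is unsupported. Given (a), (b) would only be a short connectedness argument.

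\textbf{A repair for ``$\subseteq$'' within your framework.} Move (a) inside the induction on face dimension. Assume $g_t(G)=G$ for all proper faces $G$ of $F$ and all $t\in[0,1]$. Suppose the geodesic $t\mapsto g_t(x)=\Sigma(x,t\tau(x))$ from $x\in\text{algint}(F)$ left $\text{algint}(F)$. By local invariance on the open stratum, it would have to reach $\partial F$ at some time $r$. Then $g_r(x)\in G=g_r(G)$ for a proper face $G$ of $F$. Injectivity of $g_r$, which you have for every $r$ by (ii) and $[0,1]$-saturation, forces $x\in G$, which is absurd. With this reordering, your invariance-of-domain step and (c) go through, and you do not even need Proposition~\ref{n}.

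\textbf{The reverse inclusion has the same defect, and it is not a routine compactness matter.} The strata $\text{algint}(F)$ are not compact. Any local addition of a stratum built from the restricted spray has a domain that degenerates at $\partial F$, since its geodesics must stay in the stratum. Meanwhile $g(x)-x$ is not small relative to $\text{dist}(x,\partial F)$. So the existence of a small $w\in E_F$ with $\Sigma(x,w)=g(x)$ is unproved near $\partial F$, and compactness of $P$ does not help.

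\textbf{The idea you are missing is the paper's induction on $\dim E$.} It uses the projected spray $X_F(z,v)=(z,v,v,\pi_F(f(z+x_F,v)))$, defined on all of $E_F$. Its local addition $\Sigma_F$ is a genuine local addition of $E_F$, hence uniform over the compact face. The induction hypothesis \eqref{gl.e} writes $g|_F$, up to translation, as $\Psi_F(\sigma)$ with $\sigma$ stratified. By $[0,1]$-saturation and \eqref{gl.e} again, the curves $t\mapsto\Sigma_F(z-x_F,t\sigma(z-x_F))$, $z\in F$, stay in $P_F$. By \eqref{gl.p} they are therefore $X$-geodesics, and injectivity of $\theta$ gives $\tau(z)=\sigma(z-x_F)\in E_F$. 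The same mechanism, together with Proposition~\ref{n}, is how the paper proves ``$\subseteq$''.

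\textbf{An alternative for ``$\supseteq$''.} Once ``$\subseteq$'' is known, $\Psi(\sigma)-\text{id}_P$ is stratified by Remark~\ref{ii}. You can then apply the inverse function theorem on the Banach space $C^1_{\text{str}}(P,E)$ to $\sigma\mapsto\Psi(\sigma)-\text{id}_P$, whose derivative at $0$ is the identity. Injectivity of $\Psi$ then forces $\tau$ to equal the resulting stratified preimage.
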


For the following Lemma, we refer to \cite[Lemma 5.2. (c) and Proof 5.5]{G}.

\begin{lemma}\label{t}
Let $P\subseteq E$ be a polytope with non-empty interior.
\begin{itemize}
\item[(a)] If $f\colon U\to V$ is a $C^1$-diffeomorphism between open subsets $U,V\subseteq P$, then 
\[\text{\emph{ind}}_P(f(x))=\text{\emph{ind}}_P(x)\quad \text{for all }x\in U.\]
\item[(b)] If $f\colon P\to P$ is a $C^1$-diffeomorphism, then, for each face $F\subseteq P$, there is a face $G\subseteq P$ such that $\text{\emph{dim}}(F)=\text{\emph{dim}}(G)$ and $f(F)=G$.
\end{itemize}
\end{lemma}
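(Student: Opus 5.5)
For (a), I would characterize the index intrinsically through the cone of inner directions and then show that $df(x)$ transports this cone. Fix $x\in U$ and write $P=\bigcap_{j=1}^m\lambda_j^{-1}(]-\infty,a_j])$ as before. Let
\[C_x\coloneqq\{v\in E\colon \lambda_j(v)\leq 0 \text{ for all } j \text{ with } \lambda_j(x)=a_j\},\]
so that $C_x=T^i_xP$ by Lemma \ref{d}. The largest vector subspace contained in $C_x$ is
\[L_x\coloneqq\bigcap_{j\colon\lambda_j(x)=a_j}\ker\lambda_j.\]
I will use the standard fact from \cite{Brondsted} that $L_x$ is the direction space of $\text{aff}(P(x))$. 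Hence $\dim L_x=\dim P(x)=n-\text{ind}_P(x)$, and it suffices to show $df(x)(C_x)=C_{f(x)}$.

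Since $f$ is only $C^1$, I cannot use the smooth-curve definition of inner tangent vectors directly. Instead I rerun the proof of Lemma \ref{d}(a) with $C^1$-curves.

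\emph{Inclusion $df(x)(C_x)\subseteq C_{f(x)}$.} Given $v\in C_x$, take the curve $\gamma(t)=t^2(y-x-v)+tv+x$ from that proof, with $y\in P^\circ$. For small $t\ge 0$ it lies in $P$, and near $x$ it lies in $U$ because $U$ is a neighbourhood of $x$ in $P$. Then $f\circ\gamma$ is a $C^1$-curve in $P$ starting at $f(x)$ with velocity $df(x)v$. The one-sided difference-quotient argument of Lemma \ref{d}(a) uses only differentiability at $0$, so it gives $\lambda_j(df(x)v)\le 0$ for every $j$ with $\lambda_j(f(x))=a_j$.

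\emph{Reverse inclusion.} Apply the same argument to $f^{-1}$, using $d(f^{-1})(f(x))=df(x)^{-1}$.

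Thus the linear isomorphism $df(x)$ maps $C_x$ onto $C_{f(x)}$, and therefore maps the lineality space $L_x$ onto $L_{f(x)}$. The dimensions agree, which gives (a). I expect this step to be the main point needing care: identifying the lineality space with the face direction, and checking that the $C^1$ version of the tangent-cone argument goes through.

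For (b), let $F$ be a face, put $i\coloneqq n-\dim F$, and let $f\colon P\to P$ be a $C^1$-diffeomorphism. By (a), $f$ maps $\partial_iP$ bijectively onto itself, so it restricts to a homeomorphism of $\partial_iP$. Homeomorphisms permute connected components. By the cited \cite[Lemma 5.4]{G}, $\text{algint}(F)$ is such a component, so $f(\text{algint}(F))=\text{algint}(G)$ for some face $G$ with $\dim G=n-i=\dim F$.

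Finally, a convex set of the form $F$ is the closure of its relative interior; for a vertex, $\text{algint}(F)=F$. Since $f$ is a homeomorphism of the compact space $P$,
\[f(F)=f\big(\overline{\text{algint}(F)}\big)=\overline{f(\text{algint}(F))}=\overline{\text{algint}(G)}=G.\]
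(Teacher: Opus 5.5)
Your proof is correct. The paper does not prove this lemma itself; it imports it from \cite[Lemma 5.2 (c) and Proof 5.5]{G}, so your argument is a self-contained substitute for a citation rather than a variant of an in-paper proof.

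In (a), the characterization $\text{ind}_P(x)=n-\dim\big(C_x\cap(-C_x)\big)$ holds. The set $P\cap\bigcap_{j\colon\lambda_j(x)=a_j}\lambda_j^{-1}(\{a_j\})$ is a face with $x$ in its algebraic interior, because the inactive constraints stay strict near $x$. Hence it equals $P(x)$, and its affine hull has direction space $L_x$. You were right to rerun the curve argument of Lemma \ref{d}(a) with $C^1$-curves, since $T^i_xP$ is defined through $C^{\infty}$-curves. The chain rule at boundary points, and with it $d(f^{-1})(f(x))=df(x)^{-1}$, is supplied by the calculus of \cite{GN}.

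In (b), you pass through the connected components of $\partial_iP$ via \cite[Lemma 5.4]{G} and then take closures. The paper itself reasons the same way later, in the proof of Proposition \ref{hh}, to show that $f_{\tau,j}(\text{algint}(F)\cap B_j)$ lies in a single $\text{algint}(G)$. The only case you leave implicit is the empty face, if one counts it as a face, and there $G=\emptyset$ works trivially.
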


This helps us to show:

\begin{lemma}\label{z}
For all $k\in\N\cup\{\infty\}$ and every polytope $P\subseteq E$ with non-empty interior, $\text{\emph{Diff}}^k_{\text{\emph{fr}}}(P)$ is an open subset of $\text{\emph{Diff}}^k(P)$.
\end{lemma}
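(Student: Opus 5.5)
We show that $\text{Diff}^k_{\text{fr}}(P)$ is a neighbourhood in $\text{Diff}^k(P)$ of each of its elements. Since $\text{Diff}^k_{\text{fr}}(P)$ is a subgroup, translation is the natural tool. The topology is induced by $C^k(P,E)$. We may therefore work with the coarser $C^0$-topology (uniform convergence on the compact set $P$), because every $C^0$-open set is also open in $C^k(P,E)$.

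The key observation is a finiteness statement. By Lemma \ref{t}(b), every $f\in\text{Diff}^k(P)$ induces a dimension-preserving map $F\mapsto f(F)$ on the finite set $\mathcal{F}$ of faces of $P$. This map is a bijection, since $f^{-1}$ induces its inverse. Moreover, $f\in\text{Diff}^k_{\text{fr}}(P)$ exactly when this bijection is the identity.

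For each face $F$, let $x_F$ be its barycenter; then $x_F\in\text{algint}(F)$. Put
\[\delta\coloneqq\min\{\text{dist}(x_F,G)\colon F,G\in\mathcal{F},\ \dim F=\dim G,\ F\neq G\}.\]
This minimum is positive. Indeed, if $x_F\in G$ with $\dim G=\dim F$, then $G\cap F$ is a face of $F$ containing a relative interior point of $F$, so $F\subseteq G$. Equal dimensions then force $F=G$.

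Now let $f_0\in\text{Diff}^k_{\text{fr}}(P)$ and let $f\in\text{Diff}^k(P)$ satisfy $\sup_{x\in P}\|f(x)-f_0(x)\|<\delta$. For each face $F$, the face $G\coloneqq f(F)$ has the same dimension as $F$ and contains $f(x')$, where $x'\coloneqq f_0^{-1}(x_F)\in F$. Since $\|f(x')-x_F\|<\delta$, we get $\text{dist}(x_F,G)<\delta$, hence $G=F$. Thus $f\in\text{Diff}^k_{\text{fr}}(P)$. The set of such $f$ is open in $C^0(P,E)$, and hence also in $C^k(P,E)$, so we obtain an open neighbourhood of $f_0$ inside $\text{Diff}^k_{\text{fr}}(P)$. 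This argument does not even require the left-translation trick.

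The only step needing care is the positivity of $\delta$ and the claim that $f(F)$ is a face of the same dimension. Both come from Lemma \ref{t}(b) and the elementary face-lattice property that a face meeting $\text{algint}(F)$ contains $F$, which holds because faces of $P$ intersect in faces. With these in hand, the rest is routine.
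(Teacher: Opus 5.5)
Your proposal is correct and is essentially the paper's proof. Both use a uniform ball of radius equal to the minimal distance between a chosen point of $\text{algint}(F)$ and the other faces of the same dimension, combined with Lemma~\ref{t}(b). The only difference is cosmetic: you pull $x_F$ back through $f_0$ and compare against $f(F)$, whereas the paper pulls $x_G$ back through the nearby diffeomorphism $g$ and uses that $f$ fixes $F$.
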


\begin{proof} 
Let $\Vert\cdot\Vert$ be any norm on $E$. For each face $F\subseteq P$, we choose $x_F\in\text{algint}(F)$. Given two faces $F,G\subseteq P$ of same dimension with $F\neq G$, we have $r_{F,G}\coloneqq \inf_{x\in F}\Vert x-x_G\Vert>0$. Let $r$ be the minimum of all such $r_{F,G}$.
For all $f\in\text{Diff}^k_{\text{fr}}(P)$, the subset
\[B_r(f)\coloneqq\{g\in C^k(P,E)\colon\sup_{x\in P}\Vert f(x)-g(x)\Vert<r\}\]
is open in $C^k(P,E)$.
We want to show that 
\[B_r(f)\cap\text{Diff}^k(P)\subseteq\text{Diff}^k_{\text{fr}}(P),\]
which proves that $\text{Diff}^k_{\text{fr}}(P)$ is open in $\text{Diff}^k(P)$. Let $g\in B_r(f)\cap\text{Diff}^k(P)$ and $F\subseteq P$ be a face. By Lemma \ref{t} (b), there is a face $G\subseteq P$ with $\text{dim}(F)=\text{dim}(G)$ and $g(F)=G$. Let $x\in F$ such that $g(x)=x_G$. If $F\neq G$, we get
\[r_{F,G}\leq\Vert f(x)-x_G\Vert=\Vert f(x)-g(x)\Vert<r,\]
a contradiction. Hence $F=G$ and $g(F)=F$, entailing that $g\in \text{Diff}^k_{\text{fr}}(P)$.\end{proof}

\begin{lemma}\label{u}
Let $X\colon TE\to T(TE)$ be a spray and $P\subseteq E$ be a polytope with non-empty interior. Then the following are equivalent:
\begin{itemize}
\item[(a)] $X(T\partial_iP)\subseteq T(T\partial_iP)\quad \text{for all }i\in\{0,\ldots,n\};$
\item[(b)] $X(TF)\subseteq T(TF)\quad \text{for all faces } F\subseteq P$.
\end{itemize}
\end{lemma}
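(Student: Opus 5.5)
The plan is to reduce both conditions to statements about the connected components of the strata and use the fact, quoted above, that the connected components of $\partial_iP$ are the sets $\text{algint}(F)$ for faces $F\subseteq P$ of dimension $n-i$. Here $TF$ is read as $\text{aff}(F)\times\vec F\subseteq E\times E=TE$, where $\vec F$ is the linear subspace parallel to $\text{aff}(F)$. Then $T(TF)=TF\times\vec F\times\vec F$. The condition $X(TF)\subseteq T(TF)$ says that the spray is tangent to the affine submanifold $TF$ of $TE$. Likewise, $T\partial_iP$ is the disjoint union of the open pieces $T\,\text{algint}(F)=\text{algint}(F)\times\vec F$ over faces $F$ of dimension $n-i$. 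Writing $X(x,v)=(x,v,v,f(x,v))$ as in \ref{s}, the first tangent component $v$ automatically lies in $\vec F$. So both conditions reduce to conditions on $f$:
\begin{itemize}
\item[(a)] is equivalent to $f(x,v)\in\vec F$ for all faces $F$, all $x\in\text{algint}(F)$ and all $v\in\vec F$;
\item[(b)] is equivalent to $f(x,v)\in\vec F$ for all faces $F$, all $x\in\text{aff}(F)$ and all $v\in\vec F$.
\end{itemize}

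\textbf{(b)$\Rightarrow$(a).} This is immediate, since $\text{algint}(F)\subseteq\text{aff}(F)$ and $T(T\,\text{algint}(F))$ is an open piece of $T(TF)$ sitting inside $T(T\partial_iP)$.

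\textbf{(a)$\Rightarrow$(b).} Fix a face $F$ and $v\in\vec F$, and consider the smooth map $g\colon\text{aff}(F)\to E/\vec F$, $x\mapsto f(x,v)+\vec F$. By (a), $g$ vanishes on $\text{algint}(F)$, which is a non-empty open subset of $\text{aff}(F)$. This is not yet enough, because a smooth function vanishing on an open set need not vanish everywhere. The key observation is that (b) is unnecessarily strong as phrased for all $x\in\text{aff}(F)$. Since $F$ is closed and $\text{algint}(F)$ is dense in $F$, continuity of $g$ gives $f(x,v)\in\vec F$ for all $x\in F$. Hence, if $TF$ is interpreted as $F\times\vec F$ (the tangent bundle of $F$ as a manifold with rough boundary), we obtain $X(TF)\subseteq T(TF)$ from this density and closedness argument. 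I will adopt this interpretation, which is the natural one since $F$ is itself a polytope in $\text{aff}(F)$.

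\textbf{Main obstacle.} The main obstacle is exactly this interpretational point: making sure that $TF$ means $F\times\vec F$ and not $\text{aff}(F)\times\vec F$. If the latter were intended, (a)$\Rightarrow$(b) would be false in general, because a spray can be modified away from $P$. So I would state the identification explicitly at the start of the proof, and then the closure argument finishes it.
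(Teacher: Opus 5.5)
Your proposal is correct and follows essentially the same route as the paper. The paper also identifies $TF=F\times E_F$ (with $E_F=\text{aff}(F)-F$, your $\vec F$), gets (b)$\Rightarrow$(a) by restricting to the components $\text{algint}(F)$ of $\partial_iP$, and gets (a)$\Rightarrow$(b) from continuity of $X$ via $X(\overline{T\,\text{algint}(F)})\subseteq\overline{X(T\,\text{algint}(F))}\subseteq\overline{T(T\,\text{algint}(F))}=T(TF)$, which is your density argument phrased with closures. In the write-up, drop the initial reading $TF=\text{aff}(F)\times\vec F$, so that the identification $TF=F\times\vec F$ is stated once and used consistently.
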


\begin{proof}
For a convex subset $F\subseteq E$, consider the vector subspace $E_F\coloneqq\text{aff}(F)-F$ of $E$. If we identify $TE$ with $E\times E$, we have $TF=F\times E_F$ and $T(TF)=F\times E_F\times E_F\times E_F$.

(b)$\Rightarrow$(a): Given a face $F\subseteq P$, we have
\begin{align*}
X(T\text{algint}(F))&\subseteq X(TF)\cap (\text{algint}(F)\times E^3)\subseteq T(TF)\cap (\text{algint}(F)\times E^3)\\ &=\text{algint}(F)\times (E_F)^3=T(T\text{algint}(F)),
\end{align*}
using that $E_F=E_{\text{algint}(F)}$.
Let $i\in\{0,\ldots,n\}$ and $\mathcal{F}_{n-i}(P)$ be the set of all faces of $P$ of dimension $n-i$. Since $\partial_iP=\bigcup_{F\in \mathcal{F}_{n-i}(P)}\text{algint}(F)$, it follows that
\[X(T\partial_iP)=\bigcup_{F\in \mathcal{F}_{n-i}(P)}X(T\text{algint}(F))\subseteq \bigcup_{F\in \mathcal{F}_{n-i}(P)}T(T\text{algint}(F))=T(T\partial_iP).\]
(a)$\Rightarrow$(b): Let $F\subseteq P$ be a face and $i\coloneqq n-\text{dim}(F)$. Then we have
\begin{align*}
X(T\text{algint}(F))&\subseteq X(T\partial_iP)\cap(\text{algint}(F)\times E^3) \subseteq T(T\partial_iP)\cap(\text{algint}(F)\times E^3)\\ &=\text{algint}(F)\times (E_F)^3= T(T\text{algint}(F)).
\end{align*}
Since $X$ is continuous, we obtain
\begin{align*}
X(TF)=X(\overline{T\text{algint}(F)})\subseteq\overline{X(T\text{algint}(F))}\subseteq\overline{T(T\text{algint}(F))}=T(TF).
\end{align*}
\end{proof}

\begin{example}\label{y}
Let us consider the standard spray
\[X_{\text{st}}\colon TE\to T(TE),\quad (x,v)\mapsto (x,v,v,0)\]
and any polytope $P\subseteq E$ with non-empty interior. For each face $F\subseteq P$, we have $X_{\text{st}}(TF)\subseteq F\times E_F\times E_F\times\{0\}\subseteq T(TF)$. Lemma \ref{u} shows that $X_{\text{st}}(T\partial_iP)\subseteq T(T\partial_iP)$ for all $i\in\{0,\ldots,n\}.$
\end{example}

\begin{point}
Let $P\subseteq E$ be a polytope with non-empty interior and $k\in\N\cup\{\infty\}$.
Fixing any norm on $E$ to calculate operator norms, we write
\[\Vert f\Vert_{\infty,\text{op}}\coloneqq \sup_{x\in P}\Vert f'(x)\Vert_{\text{op}}\quad\text{for all }f\in C^k(P,E).\]
One can argue similarly as in \cite{G} that the subset of all $f\in C^k(P,E)$ such that $\Vert f\Vert_{\infty,\text{op}}<1$ is open.
\end{point}

We recall a result by Glöckner (see \cite[Proposition 1.5]{G}), which is essential for the proof of Proposition \ref{m}:

\begin{proposition}\label{n}
For all $f\in C^k_{\text{\emph{str}}}(P,E)$ such that $\Vert f\Vert_{\infty,\text{\emph{op}}}<1$, we have $f+\text{\emph{id}}_P\in\text{\emph{Diff}}^k_{\text{\emph{fr}}}(P).$
\end{proposition}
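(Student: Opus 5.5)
The plan is to write $g\coloneqq f+\text{id}_P$ and show, in this order, that $g$ is injective with everywhere invertible derivative, that $g(F)=F$ for every face $F\subseteq P$ (by induction on $\dim F$), and finally that $g^{-1}$ is $C^k$. Throughout, $L\coloneqq\Vert f\Vert_{\infty,\text{op}}<1$.

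\textbf{Injectivity and invertible derivative.} Since $P$ is convex, the mean value theorem gives $\Vert f(x)-f(y)\Vert\leq L\Vert x-y\Vert$ for all $x,y\in P$. Hence
\[\Vert g(x)-g(y)\Vert\geq(1-L)\Vert x-y\Vert,\]
so $g$ is injective. Moreover $g'(x)=\text{id}_E+f'(x)$ is invertible for every $x\in P$ by the Neumann series.

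\textbf{Derivatives along faces.} Let $F$ be a face and $E_F\coloneqq\text{aff}(F)-F$. By Remark \ref{ii}, $g(\text{algint}(F))\subseteq\text{aff}(F)$, and by continuity $g(F)\subseteq\text{aff}(F)$. Differentiating $g$ along directions in $E_F$ at points of $\text{algint}(F)$, and using continuity of $f'$ for points of $F$ in general, we get $g'(x)E_F\subseteq E_F$. Since $g'(x)$ is injective and $E_F$ is finite-dimensional, $g'(x)$ restricts to an isomorphism $E_F\to E_F$. Consequently $g|_{\text{algint}(F)}\colon\text{algint}(F)\to\text{aff}(F)$ is a local diffeomorphism, hence an open map, by the inverse function theorem in $\text{aff}(F)$.

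\textbf{Faces are mapped onto themselves.} We induct on $\dim F$.
\begin{itemize}
\item[(i)] If $F=\{v\}$ is a vertex, then $g(v)\in\text{aff}(F)=\{v\}$.
\item[(ii)] Now assume $g(G)=G$ for all proper faces $G$ of $F$. The relative boundary $\partial_{\text{rel}}F$ is the union of these faces, so $g(\partial_{\text{rel}}F)=\partial_{\text{rel}}F$.
\item[(iii)] By injectivity, $U\coloneqq g(\text{algint}(F))$ is disjoint from $\partial_{\text{rel}}F$. It is open in $\text{aff}(F)$ and connected.
\item[(iv)] Since $g(F)$ is compact, $\overline U=g(F)$, so $\overline U\setminus U\subseteq g(\partial_{\text{rel}}F)=\partial_{\text{rel}}F$. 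Thus $U$ is open and closed in $\text{aff}(F)\setminus\partial_{\text{rel}}F$, i.e.\ $U$ is a connected component of this set.
\item[(v)] The components of $\text{aff}(F)\setminus\partial_{\text{rel}}F$ are $\text{algint}(F)$ and unbounded pieces: one if $\dim F\geq2$, two rays if $\dim F=1$. As $U$ is bounded, $U=\text{algint}(F)$, and therefore $g(F)=F$.
\end{itemize}
Taking $F=P$ gives $g(P)=P$, so $g\colon P\to P$ is a bijection respecting all faces.

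\textbf{Smoothness of $g^{-1}$ (main obstacle).} I expect this step to be the hardest, since boundary points of $P$ are involved. The plan is to pick a point $x\in P$ and use Whitney's extension theorem, as in the proof of Proposition \ref{j}, to extend $g$ near $x$ to a $C^k$-map $\tilde g$ on an open set of $E$. Because $\tilde g'(x)=g'(x)$ is invertible, the ordinary inverse function theorem gives a local $C^k$-inverse of $\tilde g$. On a neighbourhood of $g(x)$ in $P$, this local inverse agrees with $g^{-1}$, because $g$ is bijective onto $P$ and continuous with continuous inverse (by compactness). Hence $g^{-1}$ is $C^k$ in the sense of maps on non-open domains, so $g\in\text{Diff}^k_{\text{fr}}(P)$.
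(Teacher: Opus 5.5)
The paper does not prove this statement. It is imported from \cite[Proposition 1.5]{G} and used as a black box in the proofs of Propositions \ref{m} and \ref{hh}. Your argument is a correct, self-contained proof, and it does not depend on \cite{G}.

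The steps check out:
\begin{itemize}
\item[(1)] The Lipschitz bound gives injectivity of $g$ and invertibility of $g'(x)$. It needs the fundamental theorem of calculus along segments of $P$; for segments meeting $\partial P$ this follows by approximating with interior segments and using continuity of $f'$.
\item[(2)] The inclusion $g'(x)E_F\subseteq E_F$ makes $g|_{\text{algint}(F)}$ an open map into $\text{aff}(F)$.
\item[(3)] The clopen argument in $\text{aff}(F)\setminus\partial_{\text{rel}}F$ gives $g(F)=F$ by induction on $\dim F$. The case $F=P$ yields surjectivity.
\end{itemize}

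Two lines deserve a word. In (iv), compactness only gives $\overline U\subseteq g(F)$; the inclusion $g(F)\subseteq\overline U$ comes from continuity and density of $\text{algint}(F)$ in $F$. In (v), every component of $\text{aff}(F)\setminus F$ is unbounded: from any point outside $F$ you can run off to infinity in a direction that increases a functional separating that point from $F$.

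For the regularity of $g^{-1}$, Whitney's theorem is admissible; the paper itself uses \cite[Satz 3.1]{J} in Proposition \ref{j}. Within the paper's calculus of $C^k$-maps on sets with dense interior, however, you can avoid it:
\begin{itemize}
\item[(a)] Your step (v) applied to $F=P$ already gives $g(P^\circ)=P^\circ$.
\item[(b)] The classical inverse function theorem then makes $g^{-1}|_{P^\circ}$ a $C^k$-map with $d(g^{-1})(y,w)=g'(g^{-1}(y))^{-1}w$.
\item[(c)] This formula extends continuously to $P\times E$, because $g^{-1}$ is continuous (by compactness), $g'$ is continuous on $P$, and inversion of invertible operators is smooth.
\item[(d)] Bootstrapping with the chain rule shows that $g^{-1}$ is $C^k$ on $P$: if $g^{-1}$ is $C^j$, then $d(g^{-1})$ is $C^{\min(j,k-1)}$.
\end{itemize}

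Compared with the citation, your route makes Section 4 independent of \cite{G} for this step. It also shows that the face-respecting property follows from stratification plus $\Vert f\Vert_{\infty,\text{op}}<1$ by a purely topological argument, without needing Lemma \ref{t}.
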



\begin{proof}[\bf Proof of Proposition \ref{m}]
To prove the first assertion of Proposition \ref{m},
it suffices to show that there exists an open $0$-neighbourhood $Q\subseteq C^1(P,E)$ such that $\Sigma\circ(\text{id}_P,\tau)$ is well-defined for all $\tau\in Q$ and
\begin{align}\label{gl.v}
\Psi(Q\cap C^k_{\text{str}}(P,E))=\Psi(Q)\cap\text{Diff}^k_{\text{fr}}(P)\quad\text{for all }k\in\N\cup\{\infty\}.
\end{align}
In fact, since $\text{Diff}^1_{\text{fr}}(P)$ is open in $\text{Diff}^1(P)$ by Lemma \ref{z}, we get a $C^1$-open subset $R\subseteq C^1(P,E)$ such that $\text{Diff}^1_{\text{fr}}(P)=R\cap \text{Diff}^1(P)$; if we replace $Q$ by the $C^1$-open $0$-neighbourhood $Q\cap\Psi^{-1}(R)$,
then \eqref{gl.u} follows.

The proof is by induction on $\text{dim}(E)$. The case $\text{dim}(E)=0$ is clear, because $\tau=0$ for all $\tau\in C^k_{\text{str}}(P,E)$ and $\Sigma\circ(\text{id}_P,\tau)=\text{id}_P\in \text{Diff}^k_{\text{fr}}(P)$.
Let $\text{dim}(E)>0$
and $f\colon E\times E\to E$ be the $C^{\infty}$-map such that
\[X(z,v)=(z,v,v,f(z,v))\quad\text{for all }(z,v)\in E\times E.\]
Given a face $F\subseteq P$ with $F\neq P$, we choose any $x_F\in F$ and write $E_F\coloneqq \text{aff}(F)-x_F$ and $P_F\coloneqq F-x_F$. Then $\dim(E_F)<\dim(E)$ and $P_F$ is a polytope with non-empty interior in $E_F$. Let $\pi_F\colon E\to E_F$ be the orthogonal projection on $E_F$. Since $\pi_F$ is linear, we see that the smooth map
\[X_F\colon TE_F\to T(TE_F),\quad (z,v)\mapsto (z,v,v,\pi_F(f(z+x_F,v)))\]
is a spray on $E_F$. We want to show that
\begin{align}\label{gl.o}
X_F(T\partial_i P_F)\subseteq T(T\partial_i P_F)\quad\text{for all }i\in\{0,\ldots,\text{dim}(E_F)\}.
\end{align}
Let $G\subseteq P_F$ be a face. Then $H\coloneqq G+x_F$ is a face of $F$ and thus also a face of $P$. Lemma \ref{u} entails that $X(H\times E_H)\subseteq H\times(E_H)^3$ and hence $f(H\times E_H)\subseteq E_H$. We obtain $X_F(TG)=X_F(G\times E_H)\subseteq G\times (E_H)^3=T(TG)$. Using Lemma \ref{u} again, \eqref{gl.o} follows.

Let $\Sigma_F\colon D_F\to E_F$ be the local addition for $E_F$ associated with $X_F$.
As the next step, we show the following assertion that we need later in the proof:

\emph{If $(z,v)\in D_F$ such that $(z+x_F,v)\in D$ and $\Sigma_F(z,tv)\in P_F$ for all $t\in[0,1]$, then}
\begin{align}\label{gl.p}
\Sigma(z+x_F,v)=\Sigma_F(z,v)+x_F.
\end{align}

We have $\Sigma_F(z,v)=\gamma(1)$ for the solution $\gamma\colon[0,1]\to E_F$ to the initial value problem
\[\begin{cases}
y''(t)&=\pi_F(f(y(t)+x_F,y'(t)))\\
y'(0)&=v\\
y(0)&=z.
\end{cases}\]
Since $\gamma(t)=\Sigma_F(z,tv)\in P_F$ for all $t\in[0,1]$, it follows that $(\gamma(t)+x_F,\gamma'(t))\in F\times E_F$, thus $f(\gamma(t)+x_F,\gamma'(t))\in E_F$ and $\pi_F(f(\gamma(t)+x_F,\gamma'(t)))=f(\gamma(t)+x_F,\gamma'(t))$.
For the smooth curve 
\[\delta\colon[0,1]\to E,\quad \delta(t)\coloneqq \gamma(t)+x_F,\]
we deduce that 
\[\delta''(t)=\gamma''(t)=f((\gamma(t)+x_F,\gamma'(t))=f(\delta(t),\delta'(t)),\]
whence $\delta$ solves the initial value problem
\[\begin{cases}
y''(t)&=f(y(t),y'(t))\\
y'(0)&=v\\
y(0)&=z+x_F.
\end{cases}\]
Consequently,
\begin{align*}
\Sigma(z+x_F,v)&=\delta(1)=\gamma(1)+x_F=\Sigma_F(z,v)+x_F,
\end{align*}
which was to show.

Now we apply the induction hypothesis to find an open $0$-neighbourhood $Q_F\subseteq C^1(P_F,E_F)$ such that $\Sigma_F\circ(\text{id}_{P_F},\sigma)$ is well-defined for all $\sigma\in Q_F$ and the map
\[\Psi_F\colon Q_F\to C^1(P_F,E_F),\quad \sigma\mapsto\Sigma_F\circ(\text{id}_{P_F},\sigma)\]
satisfies
\begin{equation}\label{gl.e}
\Psi_F(Q_F\cap C^1_{\text{str}}(P_F,E_F))=\Psi_F(Q_F)\cap\text{Diff}^1_{\text{fr}}(P_F).
\end{equation}
Consider the translation map $\alpha_{x_F}\colon P_F\to F,\ z\mapsto z+x_F$ and its inverse
$\alpha_{-x_F}\colon F\to P_F,\ z\mapsto z-x_F$.
Since the map
\[C^1(P_F,E_F)\to C^1(F,E\times E),\quad\sigma\mapsto (\text{id}_F,\sigma\circ \alpha_{-x_F})\]
is continuous, after shrinking $Q_F$, we may assume that 
\begin{align}\label{gl.q}
(\text{id}_F,\sigma\circ \alpha_{-x_F})(F)\subseteq D
\end{align}
for all $\sigma\in Q_F$. After replacing $Q_F$ by a $[0,1]$-saturated $C^1$-open $0$-neigh\-bour\-hood contained in $Q_F$, we may assume in addition that $Q_F$ is $[0,1]$-saturated. We define the $C^1$-open $0$-neighbourhood
\[Q\coloneqq\{\tau\in C^1(P,E)\colon (\text{id}_P,\tau)(P)\subseteq D\text{ and } \Vert \Sigma\circ(\text{id}_P,\tau)-\text{id}_P\Vert_{\infty,\text{op}}<1\}.\]
The map
\[\rho_1\colon C^1_{\text{str}}(P,E)\to C^1_{\text{str}}(P_F,E_F),\quad \tau\mapsto\tau\vert_F \circ\alpha_{x_F}\]
being continuous, we find an open $0$-neighbourhood $V_F\subseteq C^1(P,E)$ such that 
\[V_F\cap C^1_{\text{str}}(P,E)=\rho_1^{-1}(Q_F\cap C^1_{\text{str}}(P_F,E_F)).\]
Since $\Psi_F(Q_F)$ is open in $C^1(P_F,E_F)$ and the map
\[\rho_2\colon \text{Diff}^1_{\text{fr}}(P)\to \text{Diff}^1_{\text{fr}}(P_F),\quad \varphi\mapsto\alpha_{-x_F}\circ\varphi\vert_{F} \circ \alpha_{x_F}\]
is continuous, there is an open $\text{id}_P$-neighbourhood $W_F\subseteq C^1(P,E)$ such that
\[W_F\cap \text{Diff}^1_{\text{fr}}(P)=\rho_2^{-1}(\Psi_F(Q_F)\cap \text{Diff}^1_{\text{fr}}(P_F)).\]
Now we replace $Q$ by the $C^1$-open $0$-neighbourhood \[Q\cap\bigcap_{F}(V_F\cap\Psi^{-1}(W_F)),\]
where $F$ ranges through all faces $F\subseteq P$ with $F\neq P$.

In the following, we show that each given $\tau\in Q\cap C^1_{\text{str}}(P,E)$ satisfies 
\[(\Sigma\circ(\text{id}_P,\tau))(F)=F\]
for all faces $F\subseteq P$ with $F\neq P$. Since $\tau\in V_F\cap C^1_{\text{str}}(P,E)$, we have $\tau\circ\alpha_{x_F}\in Q_F\cap C^1_{\text{str}}(P_F,E_F)$. The latter set being $[0,1]$-saturated, we see that $t(\tau\circ\alpha_{x_F})\in Q_F\cap C^1_{\text{str}}(P_F,E_F)$ for all $t\in[0,1]$, hence ${\Sigma_F\circ(\text{id}_{P_F},t(\tau\circ\alpha_{x_F}))\in\text{Diff}^1_{\text{fr}}(P_F)}$ by \eqref{gl.e}, and in particular $\Sigma_F(z-x_F,t\tau(z))\in P_F$ for all $z\in F$.
Using \eqref{gl.p}, we obtain
\[\Sigma(z,\tau(z))=\Sigma_F(z-x_F,\tau(z))+x_F=(\Sigma_F\circ(\text{id}_{P_F},\tau\circ\alpha_{x_F}))(z-x_F)+x_F\]
for all $z\in F$. It follows that
\[(\Sigma\circ(\text{id}_P,\tau))(F)=(\Sigma_F\circ(\text{id}_{P_F},\tau\circ\alpha_{x_F}))(P_F)+x_F=P_F+x_F=F.\]

Now let $k\in\N\cup\{\infty\}$ and $\tau\in Q\cap C^k_{\text{str}}(P,E)$. The preceding consideration shows that
$(\Sigma\circ(\text{id}_P,\tau))-\text{id}_P$ is stratified, thus 
$(\Sigma\circ(\text{id}_P,\tau))-\text{id}_P\in C^k_{\text{str}}(P,E)$.
Together with $\Vert\Sigma\circ(\text{id}_P,\tau)-\text{id}_P\Vert_{\infty,\text{op}}<1$, Proposition \ref{n} leads to $\Sigma\circ(\text{id}_P,\tau)\in\text{Diff}^k_{\text{fr}}(P)$. Hence 
$\Psi(Q\cap C^k_{\text{str}}(P,E))\subseteq\Psi(Q)\cap\text{Diff}^k_{\text{fr}}(P)$.

To prove the inverse inclusion, let $\tau\in Q$ such that $\Psi(\tau)=\Sigma\circ(\text{id}_P,\tau)\in \text{Diff}^k_{\text{fr}}(P)$.
Then $\tau\in C^k(P,E)$, and it remains to show that $\tau$ is stratified. Let $F\subseteq P$ be a face with $F\neq P$. Since $\Sigma\circ(\text{id}_P,\tau)\in W_F\cap \text{Diff}^1_{\text{fr}}(P)$, we have $\alpha_{-x_F}\circ(\Sigma\circ(\text{id}_P,\tau))\circ\alpha_{x_F}\in\Psi_F(Q_F)\cap\text{Diff}^1_{\text{fr}}(P_F)$.
According to \eqref{gl.e}, there is $\sigma\in Q_F\cap C^1_{\text{str}}(P_F,E_F)$ such that \[(\Sigma\circ(\text{id}_P,\tau))\vert_F=\alpha_{x_F}\circ (\Sigma_F\circ(\text{id}_{P_F},\sigma))\circ\alpha_{-x_F}.\]
Note that $(z,\sigma(z-x_F))\in D$ for all $z\in F$ by \eqref{gl.q}, which allows us to apply \eqref{gl.p}, so that we obtain
\[\Sigma(z,\sigma(z-x_F))=\Sigma_F(z-x_F,\sigma(z-x_F))+x_F=\Sigma(z,\tau(z)).\]
Hence $\tau(z)=\sigma(z-x_F)\in E_F$, using that the map $D\to E\times E, (u,v)\mapsto(u,\Sigma(u,v))$ is injective. We conclude that $\tau\in C^k_{\text{str}}(P,E)$, so that we have established \eqref{gl.v}.

At least, we verify that $\text{Diff}^k(P)$ is a submanifold of $C^k(P,E)$ modeled on $C^k_{\text{str}}(P,E)$ for all $k\in\N\cup\{\infty\}$. Note that a spray on $E$ as in Proposition \ref{m} always exists (e.g. the standard spray on $E$). Let $Q\subseteq C^1(P,E)$ be a $C^1$-open $0$-neighbourhood such that $\Sigma\circ(\text{id}_P,\tau)$ is well-defined for all $\tau\in Q$ and \eqref{gl.u} holds. Given $g\in \text{Diff}^k(P),$
the map
\[R_g\colon C^1(P,E)\to C^1(P,E),\quad f\mapsto f\circ g\]
is a $C^{\infty}$-diffeomorphism which restricts to a $C^{\infty}$-diffeomorphism $R_g\colon C^k(P,E)\to C^k(P,E)$ 
such that $R_g(\text{Diff}^k(P))=\text{Diff}^k(P)$ and $R_g(\text{id}_P)=g$. Hence
\[R_g\circ\Psi\vert_{Q\cap C^k(P,E)}\colon Q\cap C^k(P,E)\to (R_g\circ\Psi)(Q)\cap C^k(P,E)\]
is a $C^{\infty}$-diffeomorphism such that
\begin{align*}
(R_g\circ\Psi)(Q\cap C^k_{\text{str}}(P,E))&=R_g(\Psi(Q)\cap\text{Diff}^k(P))
=(R_g\circ \Psi)(Q)\cap\text{Diff}^k(P),
\end{align*}
showing that $\big(R_g\circ\Psi\vert_{Q\cap C^k(P,E)}\big)^{-1}$
is a chart for $C^k(P,E)$ around $g$ adapted to $\text{Diff}^k(P)$. This completes the proof.
\end{proof}



\begin{remark}\label{v}
For a polytope $P\subseteq E$ with non-empty interior, the submanifold structure on  $\text{Diff}^{\infty}(P)$ induced by $C^{\infty}(P,E)$ coincides with the Lie group structure on $\text{Diff}^{\infty}(P)$ constructed in \cite{G}.
\end{remark}

\begin{proof}
Let $\text{Diff}^{\infty}_{\text{fr}}(P)$ be endowed with the submanifold structure induced by $C^{\infty}(P,E)$. 
The Lie group structure on $\text{Diff}^{\infty}(P)$ constructed in \cite{G} has the following two properties:
\begin{itemize}
\item[(a)] The inclusion map $\text{Diff}^{\infty}_{\text{fr}}(P)\hookrightarrow \text{Diff}^{\infty}(P)$ is a  $C^{\infty}$-diffeomorphism onto an open subset of $\text{Diff}^{\infty}(P)$.
\item[(b)] The  map 
\[R_g\colon \text{Diff}^{\infty}(P)\to \text{Diff}^{\infty}(P),\quad f\mapsto f\circ g\]
is a $C^{\infty}$-diffeomorphism for all $g\in \text{Diff}^{\infty}(P)$.
\end{itemize}
But a manifold structure on $\text{Diff}^{\infty}(P)$ is uniquely determined by (a) and (b). Since the submanifold structure on  $\text{Diff}^{\infty}(P)$ induced by $C^{\infty}(P,E)$ also satisfies (a) and (b), it coincides with the Lie group structure in \cite{G}.
\end{proof}


\section{Proof of Theorem \ref{a} and Proposition \ref{c}}

The goal of this chapter is to prove the following two Propositions \ref{gg} and \ref{hh}, which subsume Theorem \ref{a} and Proposition \ref{c}.

\begin{proposition}\label{gg}
Let $M$ be a compact locally polyhedral $C^{\infty}$-manifold and $k\in\N\cup\{\infty\}$. Then the following holds: 
\begin{itemize}
\item[(a)] There is a unique smooth manifold structure on $\text{\emph{Diff}}^k(M)$ modeled on $\mathcal{V}^k_{\text{\emph{str}}}(M)$ which satisfies the following exponential law for all $l\in\N_0\cup\{\infty\}$: For each $C^l$-manifold $L$, possibly with rough boundary, a map $g\colon L\to \text{\emph{Diff}}^k(M)$ is $C^l$ if and only if
\[g^{\wedge}\colon L\times M\to M,\quad g^{\wedge}(x,y)\coloneqq g(x)(y)\]
is a $C^{l,k}$-map.
\item[(b)] Even a more general version of the exponential law is valid: Let $m\in\N$ and $\alpha_1,\ldots,\alpha_m\in \N_0\cup\{\infty\}$.
If $L_i$ is a $C^{\alpha_i}$-manifold for $i\in\{1,\ldots,m\}$, possibly with rough boundary, a map $g\colon L_1\times\ldots\times L_m\to \text{\emph{Diff}}^k(M)$ is $C^{(\alpha_1,\ldots,\alpha_m)}$ if and only if
\[g^{\wedge}\colon L_1\times\ldots\times L_m\times M\to M,\quad g^{\wedge}(x_1,\ldots,x_m,y)\coloneqq g(x_1,\ldots,x_m)(y)\]
is a $C^{(\alpha_1,\ldots,\alpha_m,k)}$-map.
\item[(c)] For all $l\in\N_0\cup\{\infty\}$, the composition map
\[c_{k,l}\colon\text{\emph{Diff}}^{k+l}(M)\times\text{\emph{Diff}}^k(M)\to\text{\emph{Diff}}^k(M),\quad (f,g)\mapsto f\circ g\]
is $C^{\infty,l}$ (and thus $C^l$);
moreover, the inversion map
\[\iota_{k,l}\colon\text{\emph{Diff}}^{k+l}(M)\to\text{\emph{Diff}}^k(M),\quad f\mapsto f^{-1}\]
is $C^l$.
\item[(d)] $\text{\emph{Diff}}^k(M)$ is a topological group.
\item[(e)] $\text{\emph{Diff}}^{\infty}(M)$ is a Lie group, which is $L^1$-regular in the sense of \emph{\cite{G15}} (and thus regular in the sense of \emph{\cite{Milnor}}).
\end{itemize}
\end{proposition}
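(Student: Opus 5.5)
The plan is to realize $\text{Diff}^k(M)$ as a submanifold of the manifold of mappings $C^k(M,\widetilde{M})$, with $\widetilde{M}$ as in Proposition \ref{b}, and then inherit exponential laws and smoothness of composition from the known results on manifolds of mappings. The first and central step is the $C^k$-analogue of Proposition \ref{c}. Choose a spray $X$ on $\widetilde{M}$ with $X(T\partial_iM)\subseteq T(T\partial_iM)$. To get it, glue local sprays from polyhedral adapted charts by a partition of unity: in a chart the standard spray of Example \ref{y} is tangent to all strata. Convex combinations of sprays are sprays, and tangency to $T(T\partial_iM)$ is preserved because it is a linear condition on the fibre part $f$ (by Lemma \ref{u}, $f(TF)\subseteq E_F$). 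Then find a $C^1$-open $0$-neighbourhood $Q\subseteq\mathcal{V}^1(M)$ with $\Psi(Q\cap\mathcal{V}^k_{\text{str}}(M))=\Psi(Q)\cap\text{Diff}^k(M)$ for all $k$.

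To prove this, cover $M$ by finitely many compact pieces $\varphi_j^{-1}(K_j)$ inside polyhedral adapted charts. In each chart, transport the spray and use the local argument of Proposition \ref{m} (induction over faces together with \eqref{gl.p}) to see that $\Sigma\circ\tau$ preserves each face locally when $\tau$ is stratified and small. The direction ``$\Psi(\tau)$ a diffeomorphism $\Rightarrow$ $\tau$ stratified'' follows from Lemma \ref{t}(a): $\Psi(\tau)$ preserves indices and, being $C^1$-close to $\text{id}_M$, maps each local stratum piece into the same face. Then $\tau=\theta^{-1}\circ(\text{id},\Psi(\tau))$ is tangent to that face by uniqueness of geodesics inside faces, as in \eqref{gl.p}.

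The converse direction is the main obstacle: showing that $\Sigma\circ\tau$ is a global diffeomorphism of $M$ for stratified small $\tau$. Here I would not use Proposition \ref{n} directly. Instead, argue as follows. Since $\Sigma\circ\tau$ is $C^1$-close to $\text{id}_M$, it is a local diffeomorphism of $\widetilde{M}$ near $M$ by the inverse function theorem applied to an extension. It maps each closed stratum into itself, as shown above, and hence maps $M$ into $M$. Its restriction to each stratum closure is a proper local diffeomorphism homotopic to the identity through the maps $\Sigma\circ t\tau$. Degree or covering arguments, run inductively over the strata from dimension $0$ upward, then give bijectivity. Alternatively, one can localize Glöckner's Proposition \ref{n} chartwise and use injectivity of maps $C^1$-close to the identity on compact $M$ via a Lebesgue number argument. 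With $Q$ in hand, right translations $R_g$ (smooth on $C^k(M,\widetilde{M})$, as in the end of the proof of Proposition \ref{m}) transport the adapted chart to every $g\in\text{Diff}^k(M)$, yielding the submanifold structure modeled on $\mathcal{V}^k_{\text{str}}(M)$, which is closed by Lemma \ref{ff}.

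For (a) and (b), the exponential law for $C^k(M,\widetilde{M})$ from \cite{Amiri} (in its multi-parameter form via \cite{Alzaareer,GS}) passes to a closed-modeled submanifold. A map into $\text{Diff}^k(M)$ is $C^l$ iff it is $C^l$ into $C^k(M,\widetilde{M})$, since charts are adapted and the model is a closed subspace with the restricted chart map. Uniqueness follows because the exponential law with $L$ ranging over charts determines the identity map in both directions. For (c), apply (b) to the evaluation-type map $(f,g,y)\mapsto f(g(y))$: $(f,y)\mapsto f(y)$ is $C^{\infty,k+l}$ by (a), and Lemma \ref{bb} shows $(f,g,y)\mapsto f(g(y))$ is $C^{\infty,l,k}$, so $c_{k,l}$ is $C^{\infty,l}$. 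For inversion, write $\iota^\wedge(f,y)=f^{-1}(y)$ and use the implicit function theorem with parameters in the $C^{l,k}$ setting, or bootstrap via the formula $T(f^{-1})=(Tf)^{-1}\circ f^{-1}$. (d) is the case $k=\infty$, $l=0$ plus the observation that continuity of $c_{k,0}$ and $\iota_{k,0}$ on $\text{Diff}^k$ follows similarly. For (e), $\text{Diff}^\infty(M)=\bigcap_k\text{Diff}^k(M)$ carries the projective-limit structure, and (c) gives exactly the hypotheses of Hermas--Bedida \cite{HB}, whence $L^1$-regularity by \cite{GS24}.
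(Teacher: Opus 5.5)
Your overall architecture matches the paper's. You embed $M$ in $\widetilde{M}$ via Proposition \ref{b}, take a spray tangent to the strata, find one $Q\subseteq\mathcal{V}^1(M)$ that works for all $k$, and translate the adapted chart by $R_g$. Then (a)--(b) come from \cite{GS}, uniqueness from the identity map, composition from Lemma \ref{bb}, and regularity from \cite{GS24}/\cite{HB}. The gap is in the core step, Proposition \ref{hh}, and it is sharpest in the inclusion $\Psi(Q)\cap\text{Diff}^k(M)\subseteq\Psi(Q\cap\mathcal{V}^k_{\text{str}}(M))$.

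You correctly get, as the paper does via Lemma \ref{t}(a) and a uniform $C^0$-bound, that $g=\Sigma\circ\tau$ maps $\text{algint}(F)$ locally into $\text{algint}(F)$. But ``uniqueness of geodesics inside faces'' does not then give $\tau(x)\in T_xF$. Injectivity of $\theta$ only helps once you have some $w$ tangent to $F$ with $(x,w)\in D$ such that the face geodesic $t\mapsto\Sigma_F(x-x_F,tw)+x_F$ stays in $F$ for all $t\in[0,1]$ and ends at $g(x)$. Only then does \eqref{gl.p} identify it with a geodesic of $X$. For $x$ near the relative boundary of $F$, nothing in your argument stops the face geodesic from $x$ to $g(x)$ from leaving $F$, and no smallness condition uniform in $x$ comes for free.

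The paper handles this in two steps:
\begin{itemize}
\item[(i)] An induction over faces. The reverse inclusion in Proposition \ref{m} yields a stratified $\sigma\in Q_F$ with $g|_F=\Psi_F(\sigma)$. The $[0,1]$-saturation of $Q_F$, together with the forward inclusion, keeps all $\Sigma_F(z,t\sigma(z))$ inside $P_F$.
\item[(ii)] A transfer to a manifold whose faces exist only locally. It extends $f_{\tau,j}-\text{id}$ by the cut-off operator $\Xi$ to a stratified field on all of $P_j$ with $\Vert\cdot\Vert_{\infty,\text{op}}<1$. Glöckner's Proposition \ref{n} then makes $h_{\tau,j}$ a face-respecting diffeomorphism of $P_j$, and \eqref{gl.f} with \eqref{gl.s} gives $\tau_j=\sigma$ on $A_j$.
\end{itemize}
Your sketch has no substitute for either step.

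The forward inclusion is also only gestured at. ``It maps each closed stratum into itself, and hence maps $M$ into $M$'' is a non sequitur. Face preservation concerns proper faces, whereas $(\Sigma\circ\tau)(M)\subseteq M$ is the top-dimensional case of the problem itself; in Proposition \ref{m} it comes from Proposition \ref{n}. Moreover, the face-preservation argument you borrow from Proposition \ref{m} uses its induction hypothesis \eqref{gl.e}, i.e. bijectivity on the lower-dimensional faces. So you cannot prove face preservation first and obtain bijectivity afterwards by degree arguments; the two must be interleaved. The stratum closures on which you want covering or degree arguments (e.g. the $1$-skeleton of a cube) are not manifolds, so ``proper local diffeomorphism homotopic to the identity'' has no ready-made meaning there.

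Your fallback, localizing Proposition \ref{n} and using compactness for injectivity, is what the paper does. But the localization needs exactly the devices you omit:
\begin{itemize}
\item[(i)] sprays $X_j$ on $E$ that agree with the transported spray on $S_j$ and are tangent to the faces of $P_j$;
\item[(ii)] the extended fields $\tau_j=\Xi(d\varphi_j\circ\tau\circ\varphi_j^{-1})$;
\item[(iii)] Proposition \ref{m} on the whole of $P_j$, to get \eqref{gl.s};
\item[(iv)] \eqref{gl.i}, for surjectivity.
\end{itemize}

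Finally, for inversion in (c) you appeal to an implicit function theorem with parameters for $C^{l,k}$-maps. That is a plausible alternative, but you would need a version on polyhedral, non-open domains. This is essentially what \cite[Lemma 4.2]{G} supplies, and the paper uses it through Lemma \ref{aa} and the local formula \eqref{gl.x} rather than a general theorem.
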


The main idea of the proof is the following. Let $\widetilde{M}$ be a $\sigma$-compact finite-dimensional $C^{\infty}$-manifold without boundary such that $M$ is a locally polyhedral full submanifold of $\widetilde{M}$ and $\text{id}_M\colon M\to \widetilde{M}$ is a $C^{\infty}$-diffeomorphism onto its image. We endow $C^k (M,\widetilde{M})$ with the canonical ma\-ni\-fold structure as in \cite{Amiri} and have to show that $\text{Diff}^k(M)\subseteq C^k (M,\widetilde{M})$ is a submanifold for all $k\in\N\cup\{\infty\}$.
Therefore we consider the chart for $C^k(M,\widetilde{M})$ around $\text{id}_M$ defined in \cite{Amiri}: Let $\Sigma\colon D\to \widetilde{M}$ be a local addition for $\widetilde{M}$ and $\theta\colon D\to D',\ v\mapsto(\pi_{T \widetilde{M}}(v),\Sigma(v))$ be the $C^{\infty}$-diffeomorphism associated with $\Sigma$ (with open subsets $D\subseteq T\widetilde{M}$ and $D'\coloneqq (\pi_{T \widetilde{M}},\Sigma)(D)\subseteq \widetilde{M}\times \widetilde{M}$).
Then 
\[O\coloneqq\{\tau\in \mathcal{V}^1(M)\colon \tau(M)\subseteq D\}\]
is an open subset of $\mathcal{V}^1(M)$ and 
\[O'\coloneqq\{g\in C^1(M,\widetilde{M})\colon (\text{id}_M,g)(M)\subseteq D'\}\]
an open subset of $C^1(M,\widetilde{M})$.
The map
\[\Psi\colon O\to O',\quad \tau\mapsto\Sigma\circ\tau\]
is a homeomorphism with inverse $\Phi\colon O'\to O,\ g\mapsto \theta^{-1}\circ(\text{id}_M,g).$ For all $k\in\N\cup\{\infty\}$, the restriction
\[\Phi_k\coloneqq\Phi\vert_{O'\cap C^k(M,\widetilde{M})}\colon O'\cap C^k(M,\widetilde{M})\to O\cap \mathcal{V}^k(M)\]
is a chart for
$C^k(M,\widetilde{M})$ around $\text{id}_M$. The next proposition 
shows that $\Phi_k$ is adapted to $\text{Diff}^k(M)$ for a suitable choice of $\Sigma$:

\begin{proposition}\label{hh}
Let $\widetilde{M}$ be a $\sigma$-compact $n$-dimensional $C^{\infty}$-manifold without boundary , $M\subseteq\widetilde{M}$ be a compact locally polyhedral full submanifold, $X\colon T\widetilde{M}\to T(T\widetilde{M})$ be a spray on $\widetilde{M}$ such that $X(T\partial_i M)\subseteq T(T\partial_i M)$ for all $i\in\{0,\ldots,n\}$ and $\Sigma\colon D\to \widetilde{M}$ be the associated local addition for $\widetilde{M}$.  Then there exists an open $0$-neighbourhood $Q\subseteq \mathcal{V}^1(M)$ such that $\Sigma\circ\tau$ is well-defined for all $\tau\in Q$ and
\begin{align}\label{gl.r}
\Psi(Q\cap \mathcal{V}^k_{\text{\emph{str}}}(M))=\Psi(Q)\cap\text{\emph{Diff}}^k(M)\quad\text{for all }k\in\N\cup\{\infty\}.
\end{align}
Moreover, $\text{\emph{Diff}}^k(M)$ is a submanifold of $C^k(M,\widetilde{M})$ modeled on $\mathcal{V}^k_{\text{\emph{str}}}(M)$.
\end{proposition}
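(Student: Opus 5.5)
Our plan is to reduce Proposition \ref{hh} to the polytope case, Proposition \ref{m}, by localizing in finitely many charts and then transporting the result along right translations.

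\textbf{Step 1: reduction to finitely many chart pieces.} By compactness, cover $M$ by finitely many charts $\varphi_j\colon U_j\to V_j$ of $\widetilde M$ which are polyhedral adapted to $M$, with $\varphi_j(U_j\cap M)=W_j\cap P_j$. Choose compact sets $K_j\subseteq U_j\cap M$ whose interiors cover $M$. On each chart the spray $X$ transports to a spray $X_j$ on $V_j$. The hypothesis $X(T\partial_iM)\subseteq T(T\partial_iM)$, together with Lemma \ref{u} (applied locally), shows that $X_j(TF)\subseteq T(TF)$ for faces $F$ of $P_j$ near $\varphi_j(K_j)$. The local additions are compatible:
\[\varphi_j(\Sigma(v))=\Sigma_j(T\varphi_j v)\]
as long as the geodesic $t\mapsto\Sigma(tv)$ stays in $U_j$, by Lemma \ref{g}, Remark \ref{r} and Lemma \ref{p}. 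Choose $Q\subseteq\mathcal V^1(M)$ to be a $C^1$-small, $[0,1]$-saturated $0$-neighbourhood. Its smallness should guarantee that every geodesic starting in $K_j$ remains in $U_j$. It should also guarantee that $\|\,d(\varphi_j\circ\Sigma\circ\tau\circ\varphi_j^{-1})-\text{id}\|<1$ on a polytope neighbourhood of $\varphi_j(K_j)$, and that $\Sigma\circ\tau$ is $C^1$-close to $\text{id}_M$. The last condition is needed so that any diffeomorphism in $\Psi(Q)$ preserves each connected component of each stratum $\partial_iM$, by an openness argument like Lemma \ref{z}.

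\textbf{Step 2: the inclusion ``$\subseteq$''.} Let $\tau\in Q\cap\mathcal V^k_{\text{str}}(M)$ and $g:=\Sigma\circ\tau$. We argue locally as in the proof of Proposition \ref{m}: the geodesic of a stratified vector at a point of a face stays in that face, by the induction and formula \eqref{gl.p}. Hence $g$ maps each stratum into itself, so $g-\text{id}$ is stratified in every chart. To obtain that $g$ is a diffeomorphism of $M$, we argue as follows.
\begin{itemize}
\item $g$ is $C^1$-close to $\text{id}$, so it is a local diffeomorphism of $\widetilde M$ near $M$ and injective on $M$ (by a uniform Lipschitz argument over the finite cover).
\item For surjectivity onto $M$, use the homotopy $t\mapsto\Sigma\circ t\tau$, which consists of stratum-preserving local diffeomorphisms. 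Alternatively, use Proposition \ref{n} applied chartwise: locally $g$ is a small stratified perturbation of the identity of a polytope, so its image is locally all of $P_j$. Invariance of domain on each stratum plus compactness then gives $g(M)=M$.
\end{itemize}
The inverse is $C^k$ by the inverse function theorem on manifolds with rough boundary.

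\textbf{Step 3: the inclusion ``$\supseteq$'' and the submanifold structure.} Suppose $\tau\in Q$ and $g=\Sigma\circ\tau\in\text{Diff}^k(M)$. By the choice of $Q$, $g$ maps $\text{algint}$ of each local face into itself. Working in a chart around a point $z$ of a face $F$, Proposition \ref{m}'s argument gives a stratified $\sigma$ on $F$ with $\Sigma_F(\cdot,\sigma)=g$ there. Injectivity of $\theta$ then forces $\tau(z)=\sigma(z)\in T\partial_iM$, so $\tau$ is stratified and $C^k$, since $\tau=\Phi(g)$. For the submanifold structure, translate the adapted chart by the right translation $R_g\colon f\mapsto f\circ g$, which is a diffeomorphism of $C^k(M,\widetilde M)$ preserving $\text{Diff}^k(M)$. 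The model space $\mathcal V^k_{\text{str}}(M)$ is closed by Lemma \ref{ff}, and it is complemented in the Banach case.

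\textbf{Main obstacle.} The hard part is the global surjectivity in Step 2 together with the bookkeeping needed to guarantee that geodesics stay inside a single chart. I would first try a connectedness argument: $g(M)\subseteq M$ is open in $M$ because of the local polytope picture from Proposition \ref{n}, and it is closed because $M$ is compact. This should give $g(M)=M$ on each component of $M$.
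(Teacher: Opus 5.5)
\textbf{Verdict: there is a genuine gap.} Your outline matches the paper's: finitely many polyhedral adapted charts, Lemma~\ref{p} to keep geodesics in a chart, closeness to the identity plus Lemma~\ref{t}(a) for face preservation in the reverse inclusion, injectivity of $\theta$ to identify $\tau$, and right translations $R_g$. What is missing is the device that localizes the polytope results.

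\textbf{Why the localization fails as written.} Propositions~\ref{m} and~\ref{n} are global statements. They need a spray on all of $E$ that respects every face of a whole polytope $P$, and maps defined on all of $P$. The induction in the proof of Proposition~\ref{m} works only because a face-respecting diffeomorphism of $P$ restricts to a diffeomorphism of each entire face. In a chart you have the transported spray only on $\widetilde{V_j}$, and $d\varphi_j\circ\tau\circ\varphi_j^{-1}$ only on the open piece $V_j$ of $P_j$. So neither ``arguing locally as in the proof of Proposition~\ref{m}'' nor ``applying Proposition~\ref{n} chartwise'' is available. A smaller polytope neighbourhood of $\varphi_j(K_j)$ does not help either, since $\Sigma\circ\tau$ does not respect the new faces the cut creates.

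\textbf{Two further problems in Step 2.} First, ``the geodesic of a stratified vector stays in its face'' is false as a pointwise statement: take the flat spray on $[0,\infty[^2$, $z=(\eps,0)$ and $v=(-2\eps,0)$. It holds only for $C^1$-small stratified vector fields taken as a whole. Second, even in the polytope case the inclusion $\Sigma\circ(\text{id}_P,\tau)(P)\subseteq P$ comes from Proposition~\ref{n}, not from the induction. Your Step~2 and your open/closed argument take $g(M)\subseteq M$ for granted.

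\textbf{The missing idea, as the paper does it.}
\begin{itemize}
\item Glue $X_{\varphi_j}$ with the standard spray via $h_{j,1}+h_{j,2}=1$. This gives a spray $X_j$ on all of $E$ that agrees with $X_{\varphi_j}$over an open convex set $S_j$ and still satisfies $X_j(T\partial_iP_j)\subseteq T(T\partial_iP_j)$.
\item Take a cut-off $\chi_j$, equal to $1$ on a neighbourhood $A_j$ of a compact $L_j$ and supported in $B_j=S_j\cap P_j$. It defines the extension operator $\Xi$ and a field $\tau_j\in C^k(P_j,E)$ on the whole polytope, stratified whenever $\tau$ is.
\item Shrink $Q$ so that the relevant geodesics stay in $\widetilde{U_j}$ and in $B_j$, respectively. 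Lemma~\ref{p} then gives \eqref{gl.s}: $\varphi_j\circ\Sigma\circ\tau=\Sigma_j\circ(\text{id}_{P_j},\tau_j)\circ\varphi_j$ on $\varphi_j^{-1}(A_j)$.
\item Proposition~\ref{m} for $(P_j,X_j)$ makes the right-hand side a diffeomorphism of $P_j$.
\end{itemize}
From this one reads off, for stratified $\tau$, that $(\Sigma\circ\tau)(M)\subseteq M$. Surjectivity follows via \eqref{gl.i}, which uses continuity of inversion (Lemma~\ref{aa}). One also gets local injectivity and the formula \eqref{gl.x}, which shows the inverse is $C^k$.

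\textbf{Reverse inclusion.} Set $f_{\tau,j}=\varphi_j\circ\Sigma\circ\tau\circ\varphi_j^{-1}$ on $\overline{B_j}$ and $h_{\tau,j}=\Xi(f_{\tau,j}-\text{id}_{\overline{B_j}})+\text{id}_{P_j}$. Your closeness argument shows $h_{\tau,j}-\text{id}_{P_j}$ is stratified, so $h_{\tau,j}$ is a diffeomorphism of $P_j$ by Proposition~\ref{n}. By \eqref{gl.f} it equals $\Psi_j(\sigma)$ with $\sigma$ stratified, and \eqref{gl.s} then forces $\tau_j=\sigma$ on $A_j$.

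\textbf{Minor point.} Your claim that $\mathcal{V}^k_{\text{str}}(M)$ is complemented is neither proved nor needed.
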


The proof of Proposition \ref{hh} requires some preparations. First, we construct an appropriate spray on $\widetilde{M}$:

\begin{lemma}\label{w}
Let $\widetilde{M}$ be a $\sigma$-compact $n$-dimensional $C^{\infty}$-manifold without boundary and $M\subseteq \widetilde{M}$ be a locally polyhedral full submanifold. Then there exists a spray $X\colon T\widetilde{M}\to T(T\widetilde{M})$ on $\widetilde{M}$ such that 
\[X(T\partial_i M)\subseteq T(T\partial_i M)\quad\text{for all }i\in\{0,\ldots,n\}.\]
\end{lemma}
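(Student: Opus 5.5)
The plan is to build $X$ by gluing local sprays with a smooth partition of unity. Sprays form an affine space: in a chart the quadratic parts $f$ can be combined convexly. The local sprays near $M$ will be pulled back standard sprays from polyhedral adapted charts; away from $M$ any spray will do.

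First we choose the cover. For each $x\in M$ pick a chart $\varphi_x\colon U_x\to V_x\subseteq E$ of $\widetilde{M}$ that is polyhedral adapted to $M$, so that $\varphi_x(U_x\cap M)=V_x\cap P_x$ for a polytope $P_x$. On $U_x$ define $X_x\coloneqq (T^2\varphi_x)^{-1}\circ X_{\text{st}}\circ T\varphi_x$, the pullback of the standard spray $(z,v)\mapsto(z,v,v,0)$. Since $M$ is closed in $\widetilde{M}$ (it is compact here; in general locally closed, and then we work in an open neighbourhood where it is closed), the set $U_\infty\coloneqq\widetilde{M}\setminus M$ is open. On it, or on all of $\widetilde{M}$, we use any spray $X_\infty$; one exists by \cite{GN}, or by gluing local standard sprays. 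Since $\widetilde{M}$ is $\sigma$-compact, it is paracompact, so there is a smooth partition of unity $(h_j)$ subordinate to the cover $\{U_x\}_{x\in M}\cup\{U_\infty\}$. Set $X\coloneqq\sum_j h_j X_j$, where the sum uses the fibrewise affine structure. Concretely, in any chart $\psi$ each $X_j$ has local form $(z,v)\mapsto(z,v,v,f_j(z,v))$, and $X$ has local form $(z,v,v,\sum_j h_j(z)f_j(z,v))$. Because $\sum_j h_j=1$, the first three components come out right, and because each $f_j$ is $2$-homogeneous in $v$, so is $\sum_j h_j f_j$. Hence $X$ is a spray. This is the standard argument; the point to check is that this combination is chart-independent, which holds because the transformation law of the quadratic part under chart changes is affine in $f$.

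The key step is invariance. Fix $p\in\partial_iM$. Only indices with $p\in\operatorname{supp}h_j$ contribute, and none of these is $\infty$, since $p\in M$. So it suffices to show that each $X_x$ with $p\in U_x$ maps $T_p\partial_iM$-vectors into $T(T\partial_iM)$, and that this condition is preserved under the convex combination. For the first claim, work in the chart $\varphi_x$. There, $\partial_iM\cap U_x$ corresponds to $\partial_iP_x\cap V_x$, and by Example \ref{y} the standard spray satisfies $X_{\text{st}}(T\partial_iP_x)\subseteq T(T\partial_iP_x)$. This is a pointwise statement about vectors over points of $V_x$, so it transfers via $T^2\varphi_x$.

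For the second claim, compute in the single chart $\varphi_x$ for a fixed $x$ with $p\in U_x$. Take $(z,v)\in T\partial_iP_x$ with $z$ in $\operatorname{algint}(F)$ for a face $F$, so that $v\in E_F$. Each contributing $X_j$ has the local form $(z,v,v,f_j(z,v))$, and by the previous paragraph, transported to this chart, $f_j(z,v)\in E_F$. The transport is valid because being tangent to the submanifold $\partial_iM$ is chart-independent, and the second tangent bundle condition $T(T\partial_iM)$ is intrinsic. Then $\sum_j h_j(z)f_j(z,v)\in E_F$, since $E_F$ is a vector subspace. Therefore $X(z,v)\in T(T\partial_iP_x)$.

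The main obstacle I expect is justifying rigorously that the condition "$X(w)\in T(T\partial_iM)$" is equivalent in every chart to "the fourth component lies in $E_F$". Under a chart change $\chi$, the quadratic part transforms as $f\mapsto d\chi\circ f+d^2\chi(v,v)$. This stays in the tangent space of the stratum, because $\chi$ maps the stratum into the stratum and so its second derivative along stratum directions is tangent to the stratum image. I would handle this by noting that $T^2\chi$ maps $T(T\partial_iM)$ onto itself, since $\chi$ restricts to a diffeomorphism of strata by Lemma \ref{t}(a). This makes the pointwise invariance chart-free, and the linear-combination argument then applies inside one chart.
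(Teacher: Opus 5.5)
Your proposal is correct and is essentially the paper's proof. You glue pulled-back standard sprays from polyhedral adapted charts by a partition of unity whose remaining members vanish on $M$, and apply Example~\ref{y} chartwise. The conclusion then holds because $X(v)=\sum_j h_j(\pi_{T\widetilde{M}}(v))X_j(v)$ is a linear combination in $T_v(T\widetilde{M})$ of vectors lying in the linear subspace $T_v(T\partial_iM)$. Your ``main obstacle'' amounts to no more than this: $T^2\varphi_x$ is linear on these fibres and maps $T(T(\partial_iM\cap U_x))$ onto $T(T(\partial_iP_x\cap V_x))$.

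One caveat concerns closedness of $M$, which you genuinely need, just as the paper uses compactness of $M$ to obtain $K_{x_1},\ldots,K_{x_m}$. Your parenthetical remedy for merely locally closed $M$ only yields a spray on an open subset of $\widetilde{M}$. In fact the conclusion can fail for non-closed $M$: take edges in $\R^2$ through points $y_k\to y\in\overline{M}\setminus M$ with a common tangent direction $u$ and curvature alternating between $0$ and $1$. Keeping all of these edges invariant forces incompatible limits of the spray at $(y,u)$.
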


\begin{proof}
For each $x\in M$, there exists a chart $\varphi_x\colon\widetilde{U_x}\to \widetilde{V_x}\subseteq E$ of $\widetilde{M}$ around $x$ polyhedral adapted to $M$ and a compact $x$-neighbourhood $K_x\subseteq \widetilde{U_x}$. Since $M$ is compact, we find $x_1,\ldots,x_m\in M$ such that $M\subseteq K_{x_1}\cup\ldots\cup K_{x_m}.$ For each $x\in U\coloneqq \widetilde{M}\setminus (K_{x_1}\cup\ldots\cup K_{x_m})$, we have a chart $\varphi_x\colon \widetilde{U_x}\to \widetilde{V_x}$ of $\widetilde{M}$ around $x$ with $\widetilde{U_x}\subseteq U$. Let $(h_j)_{j\in J}$ be a partition of unity on $\widetilde{M}$ subordinated to $\{\widetilde{U_x}\colon x\in\{x_1,\ldots,x_m\}\cup U\}.$ Then, for all $j\in J$, there is a chart $\varphi_j\colon \widetilde{U_j}\to \widetilde{V_j}$ with $\text{supp}(h_j)\subseteq \widetilde{U_j}$ and $\varphi_j\in\{\varphi_x\colon x\in\{x_1,\ldots,x_m\}\cup U\}.$ 
Using the standard spray $X_{\text{st}}\colon TE\to T(TE),\ (x,v)\mapsto (x,v,v,0)$,
we define a spray $X_j\coloneqq T^2\varphi_j^{-1}\circ X_{\text{st}}\vert_{T\widetilde{V_j}}\circ T\varphi_j\colon T\widetilde{U_j}\to T(T\widetilde{U_j})$. Then
\[X\colon T\widetilde{M}\to T(T\widetilde{M}),\quad v\mapsto \sum_{j\in J}h_j(\pi_{T\widetilde{M}}(v))X_j(v)\]
is a finite sum for each $v\in T\widetilde{M}$ (if $v\notin T\widetilde{U_j}$, the summand should be read as $0$) and a spray on $\widetilde{M}$ (cf. \cite[p$.\, 320$]{GN}). 
It remains to show that $X(T\partial_i M)\subseteq T(T\partial_i M)$ for all $i\in\{0,\ldots,n\}$.
Let $v\in T\partial_i M$. For each $j\in J$ with $h_j(\pi_{T\widetilde{M}}(v))\neq 0$, there is a polytope $P_j\subseteq E$ with non-empty interior such that $V_j\coloneqq\varphi_j(\widetilde{U_j}\cap M)$ is an open subset of $P_j$. Since $\varphi_j(\widetilde{U_j}\cap \partial_i M)=V_j\cap\partial_iP$ and $v\in T(\widetilde{U_j}\cap\partial_i M)$, it follows that $T\varphi_j(v)\in T(V_j\cap\partial_iP)$. Together with $X_{\text{st}}(T\partial_iP)\subseteq T(T\partial_iP)$, this implies $X_{\text{st}}(T\varphi_j(v))\in T(T(V_j\cap\partial_iP))$ and hence $X_j(v)\in T(T(\widetilde{U_j}\cap\partial_iM))\subseteq T(T\partial_iM)$. We conclude that $X(v)\in T(T\partial_iM)$.
\end{proof}



\begin{lemma}\label{q}
Every compact locally polyhedral $C^{\infty}$-manifold $M$ is metrizable.
\end{lemma}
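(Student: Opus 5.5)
The plan is to embed $M$ into a metrizable space and use that subspaces of metrizable spaces are metrizable. By Proposition \ref{b}, there is a $\sigma$-compact finite-dimensional $C^{\infty}$-manifold $\widetilde{M}$ without boundary such that $M$ is a locally polyhedral full submanifold of $\widetilde{M}$. Moreover, the inclusion $M\hookrightarrow\widetilde{M}$ is a $C^{\infty}$-diffeomorphism onto its image, and in particular a topological embedding. It therefore suffices to show that $\widetilde{M}$ is metrizable. In that case the subspace topology on $M$, which coincides with its manifold topology, is metrizable.

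To show that $\widetilde{M}$ is metrizable, I would use Urysohn's metrization theorem. $\widetilde{M}$ is Hausdorff and locally homeomorphic to open subsets of $E\cong\R^n$. Hence it is locally compact Hausdorff and therefore regular (indeed Tychonoff). It is also $\sigma$-compact. A locally compact $\sigma$-compact Hausdorff space that is locally second countable is second countable: write $\widetilde{M}=\bigcup_{l}K_l$ with $K_l$ compact, cover each $K_l$ by finitely many chart domains, and note that each chart domain is homeomorphic to an open subset of $\R^n$ and hence second countable. The resulting countable union of second countable open sets covering $\widetilde{M}$ yields a countable base. Urysohn's metrization theorem then gives that $\widetilde{M}$ is metrizable.

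Alternatively, one can avoid $\widetilde{M}$ and argue directly on $M$. $M$ is compact Hausdorff, hence normal. Each point has a chart neighbourhood homeomorphic to an open subset of a polytope in $E$, which is second countable. By compactness, finitely many such chart domains cover $M$, so $M$ is second countable. Urysohn's theorem then applies directly.

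I do not expect a genuine obstacle. The only point needing care is that the chart topology and the subspace topology agree, which holds by definition of the charts as homeomorphisms. The direct argument on $M$ is the cleaner of the two, and I would present that one.
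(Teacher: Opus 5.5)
Your proof is correct, and the direct argument you plan to present differs from the paper's. The paper proceeds like your first variant but replaces Urysohn with Whitney's embedding theorem. It uses Proposition \ref{b} to get $M\hookrightarrow\widetilde{M}$, embeds $\widetilde{M}$ into some $\R^m$ by Whitney, and concludes that $M$ is a subspace of $\R^m$.

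Your direct argument uses only three facts. First, $M$ is compact Hausdorff, hence regular. Second, it is covered by finitely many chart domains. Third, each chart domain is homeomorphic to a subset of $E\cong\R^n$ and is therefore second countable. The union of countable bases of these finitely many open sets is a countable base of $M$, and Urysohn's metrization theorem finishes.

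This route is more elementary and self-contained. It needs neither the flow construction behind Proposition \ref{b} nor Whitney's theorem, whose hypotheses include the second countability you establish directly. It also works verbatim for any compact Hausdorff space locally homeomorphic to subsets of $\R^n$. The paper's route is a one-line consequence of a result it needs anyway, and it gives the stronger conclusion that $M$ embeds in a Euclidean space.
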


\begin{proof}
By Proposition \ref{b}, there is an embedding of manifolds $M\hookrightarrow\widetilde{M}$ into a $\sigma$-compact finite-dimensional $C^{\infty}$-manifold $\widetilde{M}$. Whitney's Embedding Theorem provides an embedding $\widetilde{M}\hookrightarrow\R^m$ for some $m\in\N$.  Consequently, $M$ embeds in $\R^m$, whence $M$ is metrizable. 
\end{proof}

\begin{lemma}\label{aa}
Let $P\subseteq E$ be a polytope with non-empty interior. For all $k\in\N\cup\{\infty\}$ and $l\in\N_0\cup\{\infty\}$, the map
\[\eta_{k,l}\colon\text{\emph{Diff}}^{k+l}(P)\to\text{\emph{Diff}}^k(P),\quad f\mapsto f^{-1}\]
is $C^l$.
\end{lemma}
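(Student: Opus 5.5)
The plan is to reduce the claim to Glöckner's result for polytopes, using the submanifold structure from Proposition \ref{m}. By Proposition \ref{m}, $\text{Diff}^{k}(P)$ is a submanifold of $C^{k}(P,E)$ modeled on $C^k_{\text{str}}(P,E)$. By Remark \ref{v}, for $k=\infty$ this structure is the one from \cite{G}. There, inversion is known to be smooth and the relevant $C^{k+l}\to C^k$ statements are available (\cite[Proof of Proposition 1.5 and Section 5]{G}).

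For finite $k$, however, I would argue directly via the exponential law. Since $\text{Diff}^{k+l}(P)$ and $\text{Diff}^k(P)$ are submanifolds of $C^{k+l}(P,E)$ and $C^k(P,E)$, it suffices to show that
\[\eta_{k,l}\colon\text{Diff}^{k+l}(P)\to C^k(P,E),\quad f\mapsto f^{-1},\]
is $C^l$. By the exponential law for $C^k(P,E)$ (\cite{AS}, \cite{GN}), this is equivalent to the map
\[h\colon \text{Diff}^{k+l}(P)\times P\to E,\quad h(f,y)\coloneqq f^{-1}(y),\]
being $C^{l,k}$. The inclusion of $\text{Diff}^{k+l}(P)$ into $C^{k+l}(P,E)$ is smooth. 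Hence the evaluation map
\[\varepsilon\colon \text{Diff}^{k+l}(P)\times P\to E,\quad (f,x)\mapsto f(x),\]
is $C^{\infty,k+l}$ by the exponential law, i.e.\ by Lemma \ref{bb} applied to the evaluation on $C^{k+l}(P,E)\times P$.

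The key step is an implicit function argument. The function $h$ is defined by $\varepsilon(f,h(f,y))=y$, and $d_2\varepsilon(f,x)=f'(x)$ is invertible for all $x$. I would apply a parameter-dependent inverse function theorem for $C^{l,k}$-maps on polytopes, in the form of \cite{AS} or \cite{GN}, after extending locally across the boundary. A cleaner route avoids the boundary entirely. For each $f_0$, extend locally: choose the chart $R_{f_0}\circ\Psi$ and write $f=\Sigma\circ(\text{id}_P,\tau)\circ f_0$. It then suffices to handle $f^{-1}=f_0^{-1}\circ(\Sigma\circ(\text{id}_P,\tau))^{-1}$ for $\tau$ near $0$ in $C^{k+l}_{\text{str}}(P,E)$. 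Composition with the fixed $f_0^{-1}\in C^{k+l}$ is handled by Lemma \ref{bb}, applied with $\varepsilon$-type maps of matching regularity. For $g_\tau\coloneqq\Sigma\circ(\text{id}_P,\tau)$, Whitney extension (as in Proposition \ref{j}, via \cite{J}) yields an extension to a neighbourhood of $P$ in $E$, depending continuous-linearly on $\tau$. This is the Seeley-type extension $C^{k+l}(P,E)\to C^{k+l}(W,E)$, which is continuous linear for polytopes. The map $(\tau,x)\mapsto \tilde g_\tau(x)$ is then $C^{\infty,k+l}$ on an open set. The standard $C^{l,k}$-inverse function theorem with parameters on open sets then gives local inverses $(\tau,y)\mapsto \tilde g_\tau^{-1}(y)$ of class $C^{l,k}$. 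These agree with $g_\tau^{-1}$ on $P$, because $g_\tau(P)=P$ by Proposition \ref{m}.

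The main obstacle I expect is this last point. We need a uniform neighbourhood of $\tau=0$ on which $\tilde g_\tau$ is injective near $P$ with inverse defined on a fixed neighbourhood of $P$. We also need the loss of $l$ derivatives to be accounted for correctly, so that the output is $C^{l,k}$ and not merely $C^{l,k+l}$ in a worse sense. For injectivity I would use $\|\tilde g_\tau-\text{id}\|_{\infty,\text{op}}<1$ on a convex neighbourhood $W$ of $P$, which holds for small $\tau$. For the derivative bookkeeping I would use the chain rule Lemma \ref{bb}, together with the formula $d_2h(f,y)=(f'(h(f,y)))^{-1}$, which expresses the $j$-th $y$-derivative of $h$ through derivatives of $f$ of order at most $j$. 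Differentiating $l$ times in $f$ then uses at most $k+l$ derivatives of $f$, which is exactly what makes $h$ a $C^{l,k}$-map.
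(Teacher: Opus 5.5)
Your argument is correct in outline, but it takes a longer route than the paper.

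\textbf{The paper's route.} The paper neither localizes via $R_{f_0}$ nor leaves $P$. It composes the evaluation map $\varepsilon_{k+l}$ with $\varphi^{-1}\times\text{id}_P$ for an arbitrary chart $\varphi$ of $\text{Diff}^{k+l}(P)$; since $\varepsilon_{k+l}$ is $C^{\infty,k+l}$, this composite is jointly $C^{k+l}$. It then quotes \cite[Lemma 4.2]{G}: the pointwise inverses of a jointly $C^{k+l}$ family $V_\varphi\times P\to P$ of diffeomorphisms of $P$ again form a jointly $C^{k+l}$ map. Since $C^{k+l}$ implies $C^{l,k}$, the exponential law finishes.

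\textbf{Your route.} You essentially re-prove that lemma by hand:
\begin{itemize}
\item right translation reduces to $g_\tau=\Sigma\circ(\text{id}_P,\tau)$ with $\tau$ near $0$;
\item a continuous linear extension carries the problem to an open convex $W\supseteq P$;
\item the bound $\|\tilde g_\tau-\text{id}\|_{\infty,\text{op}}<1$ gives injectivity uniformly in $\tau$;
\item an open-set inverse function theorem with parameters produces the inverse;
\item $g_\tau(P)=P$, from Proposition \ref{m}, lets you restrict back to $P$.
\end{itemize}
This works and is self-contained modulo standard tools. It does need inputs the paper avoids, notably a continuous linear extension operator on $C^{k+l}(P,E)$. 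For $k+l=\infty$ that needs a genuine reference, e.g.\ Stein's operator for Lipschitz domains; Seeley reflection alone does not handle non-simple vertices. Alternatively, reduce the infinite cases to finite ones.

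\textbf{A step to drop.} Drop the derivative bookkeeping and the idea of a ``$C^{l,k}$-inverse function theorem''. The $\tau$-derivative of the inverse involves $\partial_x F$ composed with the inverse, so ``$C^{l,k}$ in, $C^{l,k}$ out'' fails in general. Also, $C^{l,k+l}$ would be a stronger conclusion than $C^{l,k}$, not a weaker one, and it is false in general.

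The clean step is the paper's:
\begin{itemize}
\item $C^{\infty,k+l}$ implies jointly $C^{k+l}$;
\item a parameter-dependent inverse function theorem valid in Bastiani's calculus then gives a jointly $C^{k+l}$ inverse; Glöckner's version loses no derivative, unlike the Fr\'echet-calculus theorem on Banach spaces;
\item jointly $C^{k+l}$ implies $C^{l,k}$.
\end{itemize}
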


\begin{proof}
We follow the idea in \cite[pp$.\, 9$f]{G}. Since the evaluation map $\eps_{k+l}\colon \text{Diff}^{k+l}(P)\times P\to P$ is $C^{\infty,k+l}$ and thus $C^{k+l}$, 
the map
\[\eps_{k+l}\circ(\varphi^{-1}\times\text{id}_P)\colon V_{\varphi}\times P\to P,\quad (h,x)\mapsto\varphi^{-1}(h)(x)\]
is $C^{k+l}$ for each chart $\varphi\colon U_{\varphi}\to V_{\varphi}\subseteq C^{k+l}_{\text{str}}(P,E)$ of $\text{Diff}^{k+l}(P)$. By \cite[Lemma 4.2]{G}, also 
\[g\colon V_{\varphi}\times P\to P,\quad (h,x)\mapsto(\varphi^{-1}(h))^{-1}(x)\]
is $C^{k+l}$, whence
\[g\circ(\varphi\times \text{id}_P)\colon U_{\varphi}\times P\to P,\quad (f,x)\mapsto f^{-1}(x)\]
is $C^{k+l}$. We deduce that
\[\eta_{k,l}^{\wedge}\colon \text{Diff}^{k+l}(P)\times P\to P,\quad (f,x)\mapsto f^{-1}(x)\]
is $C^{k+l}$ and thus $C^{l,k}$, such that the exponential law entails that $\eta_{k,l}$ is $C^l$.
\end{proof}

\begin{proof}[\bf Proof of Propositions \ref{gg} and \ref{hh}] 
Let $M$ be a compact locally polyhedral $C^{\infty}$-manifold. By Proposition \ref{b}, there exists a $\sigma$-compact finite-dimensional $C^{\infty}$-manifold $\widetilde{M}$ without boundary such that $M$ is a locally polyhedral full submanifold of $\widetilde{M}$ and $\text{id}_M\colon M\to\widetilde{M}$ is a $C^{\infty}$-diffeomorphism onto its image. Lemma \ref{w} provides a spray $X\colon T\widetilde{M}\to T(T\widetilde{M})$ on $\widetilde{M}$ such that 
$X(T\partial_i M)\subseteq T(T\partial_i M)$ for all $i\in\{0,\ldots,n\}.$
Let $\Sigma\colon D\to\widetilde{M}$ be the associated local addition for $\widetilde{M}$. At the beginning, we prepare the construction of the $0$-neighbourhood $Q\subseteq\mathcal{V}^1(M)$ which is asked for in Proposition \ref{hh}.
Given $j\in M$, there is a chart $\varphi_j\colon  \widetilde{U_j}\to \widetilde{V_j}\subseteq E$ for $\widetilde{M}$ around $j$ polyhedral adapted to $M$. Let $\varphi_j\vert_{U_j}\colon U_j\to V_j$ be the corresponding submanifold chart for $M$, where $U_j\coloneqq\widetilde{U_j}\cap M$ and $V_j\coloneqq\varphi_j(U_j)\subseteq P_j$ is an open subset of a polytope $P_j\subseteq E$ with non-empty interior. Let $W_j\subseteq \widetilde{V_j}$ be an open subset with $W_j\cap P_j=V_j$ and $S_j\subseteq E$ be an open convex $\varphi(j)$-neighbourhood with $\overline{S_j}\subseteq W_j$. Writing $B_j\coloneqq S_j\cap P_j$, we find a compact subset $L_j\subseteq B_j$ such that $\varphi(j)$ is in the interior of $L_j$ relative $P_j$. Then the subset $K_j\coloneqq\varphi_j^{-1}(L_j)\subseteq M$ is a compact $j$-neighbourhood in $M$.
Furthermore, there exists a smooth function $\chi_j\colon P_j\to[0,1]$ such that $\text{supp}(\chi_j)\subseteq B_j$ and $\chi_j\vert_{A_j}=1$ for some relative open subset $A_j\subseteq P_j$ with $L_j\subseteq A_j$ (see \cite[Exercise 3.5.13]{GN}). For all $k\in\N\cup\{\infty\}$ and $\sigma\in C^k(V_j,E)$, we define
\begin{align}\label{gl.t}
\Xi(\sigma)\colon P_j\to E,\quad x\mapsto
\begin{cases}
\chi_j(x)\sigma(x), &\text{ if }x\in B_j\\
0, &\text{ if }x\in P_j\setminus\text{supp}(\chi_j)
\end{cases}\end{align}
and get a
continuous linear extension operator
\[\Xi\colon C^k(V_j,E)\to C^k(P_j,E),\quad \sigma\mapsto\Xi(\sigma)\]
such that $\Xi(C_{\text{str}}^k(V_j,E))\subseteq C_{\text{str}}^k(P_j,E)$.
By Lemma \ref{ff} (a), the continuous linear map
\[\mathcal{V}^k(M)\to C^k(V_j,E),\quad \tau\mapsto d\varphi_j\circ\tau\circ\varphi_j^{-1}´\vert_{V_j}\]
takes $\mathcal{V}^k_{\text{str}}(M)$ into $C_{\text{str}}^k(V_j,E)$. 
The composition
\[\mathcal{V}^k(M)\to C^k(P_j,E),\quad \tau\mapsto \tau_j\coloneqq\Xi(d\varphi_j\circ\tau\circ\varphi_j^{-1}\vert_{V_j})\]
is continuous linear and thus smooth, and $\tau_j\in C_{\text{str}}^k(P_j,E)$ for all $\tau\in \mathcal{V}^k_{\text{str}}(M)$. To find an appropriate spray on $E$ as in Proposition \ref{m}, we choose
smooth functions $h_{j,1},h_{j,2}\colon E\to [0,1]$ such that $h_{j,1}(x)+h_{j,2}(x)=1$ for all $x\in E$, $\text{supp}(h_{j,1})\subseteq W_j$ and $\text{supp}(h_{j,2})\subseteq E\setminus\overline{S_j}$.
The condition $X(T\partial_i M)\subseteq T(T\partial_i M)$ implies that the spray 
\[X_{\varphi_j}\coloneqq(T^2 \varphi_j\circ X\circ T\varphi_j^{-1})\colon T\widetilde{V_j}\to T(T\widetilde{V_j})\]
satisfies $X_{\varphi_j}(T(V_j\cap\partial_i P_j))\subseteq T(T(V_j\cap\partial_i P_j))$ for all $i\in\{0,\ldots,n\}$. Let $X_{\text{st}}\colon TE\to T(TE),\ (x,v)\mapsto (x,v,v,0)$ be the standard spray on $E$. We define the spray
\[X_j\colon TE\to T(TE),\quad(x,v)\mapsto h_{j,1}(x)X_{\varphi_j}(x,v)+h_{j,2}(x)X_{\text{st}}(x,v),\]
where $X_{\varphi_j}(x,v)$ should be read as $0$ if $x\notin \widetilde{V_j}$. 
Then $X_j\vert_{TS_j}=X_{\varphi_j}\vert_{TS_j}$ and  $X_j(T\partial_iP)\subseteq T(T\partial_iP)$ for all $i\in\{0,\ldots,n\}$. Let $\Sigma_j\colon D_j\to E$ be the associated local addition for $E$. We apply Proposition \ref{m} to get an open $0$-neighbourhood $Q_j\subseteq C^1(P_j,E)$ such that $\Sigma_j\circ(\text{id}_{P_j},\tau)$ is well-defined for all $\tau\in Q_j$ and the smooth map
\begin{equation*}
\Psi_j\colon Q_j\to C^1(P_j,E),\quad \sigma\mapsto\Sigma_j\circ(\text{id}_{P_j},\sigma)\end{equation*}
satisfies
\begin{equation}\label{gl.f}
\Psi_j(Q_j\cap C^k_{\text{str}}(P_j,E))=\Psi_j(Q_j)\cap\text{Diff}^k(P_j)\quad\text{for all }k\in\N\cup\{\infty\}.
\end{equation}
Since $M$ is compact, there is a finite subset $J\subseteq M$ such that \[M=\bigcup_{j\in J}(K_j)^{\circ},\] where $(K_j)^{\circ}$ is the interior of $K_j$ relative $M$. We define
\[Q\coloneqq\{\tau\in\mathcal{V}^1(M)\colon\tau(M)\subseteq D\ \text{and } \tau_j\in Q_j\text{ for all }j\in J\},\]
which is an open $0$-neighbourhood in $\mathcal{V}^1(M)$. After replacing $Q$ by a $[0,1]$-saturated $C^1$-open $0$-neighbourhood contained in $Q$, we may assume throughout this proof that $Q$ is $[0,1]$-saturated.
Since the maps \[\Psi\colon Q\to C^1(M,\widetilde{M}),\quad \tau\mapsto\Sigma\circ\tau\] and
\[Q\to C^1(P_j,E),\quad \tau\mapsto\Sigma_j\circ(\text{id}_{P_j},\tau_j)\]
are smooth and thus continuous, after shrinking $Q$, we may assume that
\begin{equation}\label{gl.g}
(\Sigma\circ\tau)(\varphi_j^{-1}(\overline{B_j}))\subseteq\widetilde{U_j}
\end{equation}
and
\begin{equation}\label{gl.h}
(\Sigma_j\circ(\text{id}_{P_j},\tau_j))(\overline{A_j})\subseteq B_j
\end{equation}
for all $\tau\in Q$ and $j\in J$. Furthermore, we have $\tau_j\in Q_j\cap C^k_{\text{str}}(P_j,E)$ for all $\tau\in Q\cap\mathcal{V}_{\text{str}}^k(M)$, hence $\Sigma_j\circ(\text{id}_{P_j},\tau_j)\in\text{Diff}^k(P_j)$ by \eqref{gl.f}.
Since $\eta_{k,l}\colon\text{Diff}^{k+l}(P)\to\text{Diff}^k(P),\ f\mapsto f^{-1}$
is $C^l$ for all $l\in\N_0\cup\{\infty\}$ by Lemma \ref{aa}, it follows that the map
\begin{align}\label{gl.w}
F^j_{k,l}\colon Q\cap\mathcal{V}^{k+l}_{\text{str}}(M)\to \text{Diff}^k(P_j),\quad \tau\mapsto (\Sigma_j\circ(\text{id}_{P_j},\tau_j))^{-1}
\end{align}
is $C^l$. The subset $\lfloor L_j,A_j \rfloor\coloneqq\{f\in\text{Diff}^1(P_j)\colon f(L_j)\subseteq A_j\}$ being an open $\text{id}_{P_j}$-neighbourhood in $\text{Diff}^1(P_j)$, we see that $(F^j_{1,0})^{-1}(\lfloor L_j,A_j \rfloor)$ is an open $0$-neighbourhood in $Q\cap\mathcal{V}^1_{\text{str}}(M)$. As a consequence, there is a $C^1$-open $0$-neighbourhood $R_j\subseteq Q$ such that $R_j\cap \mathcal{V}^1_{\text{str}}(M)=(F^j_{1,0})^{-1}(\lfloor L_j,A_j \rfloor)$. If we replace $Q$ by $\bigcap_{j\in J}R_j$, we may assume that
\begin{equation}\label{gl.i}
(\Sigma_j\circ(\text{id}_{P_j},\tau_j))^{-1}(L_j)\subseteq A_j
\end{equation}
for all $\tau\in Q\cap \mathcal{V}^1_{\text{str}}(M)$ and $j\in J$. 

In the following, we show that
\begin{align}\label{gl.s}
\varphi_j(\Sigma(\tau(x)))=\Sigma_j(\varphi_j(x),\tau_j(\varphi_j(x)))\in V_j
\end{align}
for all $\tau\in Q$, $j\in J$ and $x\in\varphi_j^{-1}(A_j)$.
We have $\Sigma(\tau(x))=\pi_{T\widetilde{M}}(\gamma(1))$ for the solution $\gamma\colon [0,1]\to T\widetilde{M}$ to the initial value problem
\[\begin{cases}
\dot{y}(t)=X(y(t))\\
y(0)=\tau(x).
\end{cases}\]
Using \eqref{gl.g} and the assumption that $Q$ is balanced, we see that $\Sigma(t\tau(x))\in\widetilde{U_j}$ for all $t\in[0,1]$. Lemma \ref{p} entails that $\gamma([0,1])\subseteq T\widetilde{U_j}$. According to Remark \ref{r}, the curve $T\varphi_j\circ\gamma\colon[0,1]\to\widetilde{V_j}\times E$ solves
\[\begin{cases}
\dot{y}(t)=X_{\varphi_j}(y(t))\\
y(0)=T\varphi_j(\tau(x)).
\end{cases}\]
Let $\text{pr}_1\colon E^2\to E$ and $\text{pr}_4\colon E^4\to E$ denote the projections onto the first, resp., fourth component.
Since $T\varphi_j(\tau(x))=(T\varphi_j\circ\tau\circ\varphi_j^{-1})(\varphi_j(x))=(\text{id}_{\widetilde{V_j}},\tau_j)(\varphi_j(x)),$ it follows that $\text{pr}_1\circ T\varphi_j\circ\gamma\colon[0,1]\to\widetilde{V_j}\subseteq E$ solves
\[\begin{cases}
y''(t)=(\text{pr}_4\circ X_{\varphi_j})(y(t),y'(t))\\
y'(0)=\tau_j(\varphi_j(x))\\
y(0)=\varphi_j(x)\\
\end{cases}\]
(see \ref{s} (a)).
Moreover, we have $\Sigma_j(\varphi_j(x),\tau_j(\varphi_j(x)))=\delta(1)$ for the solution $\delta\colon [0,1]\to E$ to
\[\begin{cases}
y''(t)=(\text{pr}_4\circ X_j)(y(t),y'(t))\\
y'(0)=\tau_j(\varphi_j(x))\\
y(0)=\varphi_j(x).
\end{cases}\]
But \eqref{gl.h} shows that $\delta(t)=\Sigma_j(\varphi_j(x),t\tau_j(\varphi_j(x)))\in B_j$ for all $t\in [0,1]$, hence
\[\delta''(t)=(\text{pr}_4\circ X_j)(\delta(t),\delta'(t))=(\text{pr}_4\circ X_{\varphi_j})(\delta(t),\delta'(t)),\]
which yields $\delta=\text{pr}_1\circ T\varphi_j\circ\gamma$.
We deduce that \[\varphi_j(\Sigma(\tau(x)))=\varphi_j(\pi_{T\widetilde{M}}(\gamma(1)))=\text{pr}_1(T\varphi_j(\gamma(1)))=\delta(1)=\Sigma_j(\varphi_j(x),\tau_j(\varphi_j(x))).\]
Together with $\delta(1)\in B_j\subseteq V_j$, \eqref{gl.s} follows.

Now we prove that $(\Sigma\circ\tau)(M)=M$ holds for each $\tau\in Q\cap \mathcal{V}^1_{\text{str}}(M)$. Let $x\in M$, say $x\in K_j$ for $j\in J$. By \eqref{gl.s}, we have $\varphi_j(\Sigma(\tau(x)))\in V_j$, thus $\Sigma(\tau(x))\in U_j\subseteq M$. This yields $(\Sigma\circ\tau)(M)\subseteq M$. Since $\varphi_j(x)\in L_j$, \eqref{gl.i} provides some $v\in A_j$ such that $ \varphi_j(x)=\Sigma_j(v,\tau_j(v))$. For $y\coloneqq\varphi_j^{-1}(v)$, we obtain
\[\varphi_j(\Sigma(\tau(y)))=\Sigma_j(\varphi_j(y),\tau_j(\varphi_j(y)))=\Sigma_j(v,\tau_j(v))=\varphi_j(x)\]
using \eqref{gl.s} for the first equation. Hence $x=\Sigma(\tau(y))$, which implies that $(\Sigma\circ\tau)(M)=M$.

The next step is to show that there is a $C^1$-open $0$-neighbourhood $Q'\subseteq Q$ such that $\Sigma\circ\tau$ is injective for all $\tau\in Q'\cap\mathcal{V}^1_{\text{str}}(M)$.
To derive a contradiction, we assume that such a $Q'$ does not exist. Since $\mathcal{V}^1_{\text{str}}(M)$ is metrizable by Lemma \ref{ff} (c), we find a sequence $(\tau_m)_{m\in\N}$ in $Q\cap \mathcal{V}^1_{\text{str}}(M)$ such that $\lim_{m\to\infty}\tau_m=0$ and $\Sigma\circ\tau_m$ is not injective for all $m\in\N$. Let $x_m,y_m\in M$ such that $x_m\neq y_m$ and $(\Sigma\circ\tau_m)(x_m)=(\Sigma\circ\tau_m)(y_m)$. Since $M$ is compact and metrizable (see Lemma \ref{q}), after passing to subsequences, we may assume that both $(x_m)_{m\in\N}$ and $(y_m)_{m\in\N}$ are convergent sequences with certain limits $x$ and $y$ in $M$. The evaluation map $C(M,M)\times M\to M,\ (f,x)\mapsto f(x)$ being continuous, we see that
\[x=(\Sigma\circ 0)(x)=\lim_{m\to\infty}(\Sigma\circ\tau_m)(x_m)=\lim_{m\to\infty}(\Sigma\circ\tau_m)(y_m)=y.\]
Let $j\in J$ with $x\in (K_j)^{\circ}$. Then there exists $m\in\N$ such that $x_m,y_m\in (K_j)^{\circ}$. We obtain
\begin{align*}
\Sigma_j(\varphi_j(x_m),(\tau_m)_j(\varphi_j(x_m)))&=\varphi_j(\Sigma(\tau_m(x_m)))=\varphi_j(\Sigma(\tau_m(y_m)))\\
&=\Sigma_j(\varphi_j(x_m),(\tau_m)_j(\varphi_j(y_m))).
\end{align*}
The functions $\Sigma_j\circ(\text{id}_{P_j},(\tau_m)_j)$ and $\varphi_j$ being injective, we deduce that $x_m=y_m$, a contradiction. Consequently, a $0$-neighbourhood $Q'\subseteq Q$ as above actually exists.
If we replace $Q$ by $Q'$, we may assume that $\Sigma\circ\tau$ is injective for all $\tau\in Q\cap\mathcal{V}^1_{\text{str}}(M)$.

At this point, we have achieved that the smooth function $\Sigma\circ\tau \colon M\to M$ is a bijection for all $\tau\in Q\cap\mathcal{V}^1_{\text{str}}(M)$. Given $k\in\N\cup\{\infty\}$, this holds in particular for all $\tau\in Q\cap\mathcal{V}^k_{\text{str}}(M)$. To show that $\Sigma\circ\tau$ is even a $C^k$-diffeomorphism, let $j\in J$ and $x\in (K_j)^{\circ}$. As seen before, there is $y\in\varphi_j^{-1}(A_j)$ such that $x=(\Sigma\circ\tau)(y)$, where $(\Sigma\circ\tau)(y)=(\varphi_j^{-1}\circ(\Sigma_j\circ(\text{id}_{P_j},\tau_j))\circ\varphi_j)(y).$ It follows that
\[(\Sigma\circ\tau)^{-1}(x)=y=(\varphi_j^{-1}\circ(\Sigma_j\circ(\text{id}_{P_j},\tau_j))^{-1}\circ\varphi_j)(x),\]
hence
\begin{align}\label{gl.x}
(\Sigma\circ\tau)^{-1}\vert_{(K_j)^{\circ}}=\varphi_j^{-1}\circ(\Sigma_j\circ(\text{id}_{P_j},\tau_j))^{-1}\circ\varphi_j\vert_{(K_j)^{\circ}}.
\end{align}
Knowing that the function on the right hand side is $C^k$, we deduce that $(\Sigma\circ\tau)^{-1}$ is $C^k$ and thus $\Sigma\circ\tau$ a $C^k$-diffeomorphism. We obtain that
\[\Psi(Q\cap \mathcal{V}_{\text{str}}^k(M))\subseteq\Psi(Q)\cap\text{Diff}^k(M).\]
Now we have to establish the inverse inclusion. Given $j\in J$, for each face $F\subseteq P_j$ such that $\text{algint}(F)\cap B_j\neq\emptyset$ we choose some $x_F\in \text{algint}(F)\cap B_j$. For two faces $F,G\subseteq P_j$ of same dimension with $\text{algint}(F)\cap B_j\neq\emptyset$ and $F\neq G$, we define $r_{F,G}\coloneqq\inf_{x\in G}\Vert x-x_F\Vert>0$. Let $r$ be the minimum of all such $r_{F,G}$. Fixing any norm $\Vert\cdot\Vert$ on $E$, the subset
\[B_r(\text{id}_{\overline{B_j}})\coloneqq\{f\in C^1(\overline{B_j},E)\colon \sup_{x\in \overline{B_j}}\Vert f(x)-x\Vert<r\}\]
is open in $C^1(\overline{B_j},E)$ and the map
\[Q\to C^1(\overline{B_j},E),\quad\tau\mapsto f_{\tau,j}\coloneqq\varphi_j\circ\Sigma\circ\tau\circ\varphi_j^{-1}\vert_{\overline{B_j}}\]
is continuous. After shrinking $Q$, we may therefore assume that
\begin{align}\label{gl.j}
\sup_{x\in\overline{B_j}}\Vert f_{\tau,j}(x)-x\Vert<r
\end{align}
for all $\tau\in Q$ and $j\in J$. Moreover, since the maps
\[Q\to C^1(P_j,E),\quad\tau\mapsto g_{\tau,j}\coloneqq\Xi(f_{\tau,j}-\text{id}_{\overline{B_j}})\]
with $\Xi$ as in \eqref{gl.t}
and
\[Q\to C^1(P_j,E),\quad\tau\mapsto h_{\tau,j}\coloneqq g_{\tau,j}+\text{id}_{P_j}\]
are continuous, after shrinking $Q$ again, we may assume that
\begin{align}\label{gl.k}
\Vert g_{\tau,j}\Vert_{\infty,\text{op}}<1
\end{align}
and
\begin{align}\label{gl.l}
h_{\tau,j}\in\Psi_j(Q_j)
\end{align}
for all $\tau\in Q$ and $j\in J$. Now let $\tau\in Q$ such that $\Sigma\circ\tau\in\text{Diff}^k(M)$. Then $\tau\in\mathcal{V}^k(M)$.
Our goal is to show that $\tau$ is stratified, i.e. $\tau\in\mathcal{V}^k_{\text{str}}(M)$. For all $j\in J$, the map $f_{\tau,j}\vert_{B_j}\colon B_j\to V_j$ has a relative open image in $P_j$ and is a $C^k$-diffeomorphism onto its image.
Using Lemma \ref{t} (a), we see that $f_{\tau,j}(\partial_iP_j\cap B_j)\subseteq \partial_iP_j$ for all $i\in\{0,\ldots,n\}$. We want to check that $f_{\tau,j}\vert_{B_j}-\text{id}_{B_j}$ is stratified. For this purpose, let $F\subseteq P_j$ be a face such that $\text{algint(F)}\cap B_j\neq\emptyset$ and write $i\coloneqq n-\text{dim}(F)$. The set $\text{algint(F)}\cap B_j$ is connected, because it is convex. Thus $f_{\tau,j}(\text{algint(F)}\cap B_j)\subseteq\partial_i P_j$ is connected. 
Since the connected components of $\partial_{i}P_j$ are the sets $\text{algint}(G)$ for faces $G\subseteq P$ with $\text{dim}(F)=\text{dim}(G)$, there exists such a $G$ with $f_{\tau,j}(\text{algint}(F)\cap B_j)\subseteq\text{algint}(G)$.
In the case that $F\neq G$, \eqref{gl.j} yields
\[r_{F,G}\leq\Vert f_{\tau,j}(x_F)-x_F\Vert<r\leq r_{F,G},\]
a contradiction. Hence $F=G$ and $f_{\tau,j}(\text{algint}(F)\cap B_j)\subseteq\text{algint}(F)$. Remark \ref{ii} implies that $f_{\tau,j}\vert_{B_j}-\text{id}_{B_j}$ is stratified. Consequently, $g_{\tau,j}=\Xi(f_{\tau,j}-\text{id}_{\overline{B_j}})\in C^k_{\text{str}}(P_j,E)$. We deduce from \eqref{gl.k} and Proposition \ref{n} that $h_{\tau,j}=g_{\tau,j}+\text{id}_{P_j}\in\text{Diff}^k(P_j)$. Together with \eqref{gl.l} and \eqref{gl.f}, we obtain that $h_{\tau,j}\in\Psi_j(Q_j\cap C^k_{\text{str}}(P_j,E))$, thus there exists $\sigma\in Q_j\cap C^k_{\text{str}}(P_j,E)$ such that $h_{\tau,j}=\Sigma_j\circ(\text{id}_{P_j},\sigma)$.
Using \eqref{gl.s}, it follows that
\[\Sigma_j(y,\tau_j(y))=\varphi_j(\Sigma(\tau(\varphi_j^{-1}(y))))=h_{\tau,j}(y)=\Sigma_j(y,\sigma(y))\]
and therefore $\tau_j(y)=\sigma(y)$ for all $y\in A_j$.
Let $i\in\{0,\ldots,n\}$ and $x\in\partial_iM$, say $x\in K_j$ for $j\in J$. Then $\varphi_j(x)\in\partial_iP_j$. We obtain
\begin{align*}
T\varphi_j(\tau(x))&=(T\varphi_j\circ\tau\circ\varphi_j^{-1}\vert_{V_j})(\varphi_j(x))=(\varphi_j(x),\tau_j(\varphi_j(x)))\\
&=(\varphi_j(x),\sigma(\varphi_j(x)))\in T\partial_i P_j,
\end{align*}
using that $\sigma$ is stratified.
Hence $\tau(x)\in T\partial_iM$ and $\tau\in\mathcal{V}^k_{\text{str}}(M),$ which was to show. In conclusion, we have proved that
\[\Psi(Q)\cap\text{Diff}^k(M)\subseteq \Psi(Q\cap \mathcal{V}^k_{\text{str}}(M)),\]
whence equality holds.

Recalling that \[\Phi_k\coloneqq\Psi^{-1}\vert_{\Psi(Q)\cap C^k(M,\widetilde{M})}\colon \Psi(Q)\cap C^k(M,\widetilde{M})\to Q\cap\mathcal{V}^k(M)\] is a chart for $C^k(M,\widetilde{M})$ around $\text{id}_M$, we have shown that \[\Phi_k(\Psi(Q)\cap\text{Diff}^k(M))=Q\cap \mathcal{V}^k_{\text{str}}(M),\] which means that $\Phi_k$ is adapted to $\text{Diff}^k(M)$. 
For later use, let
\begin{align}\label{gl.y}
\Theta_k\colon \Psi(Q)\cap\text{Diff}^k(M)\to Q\cap\mathcal{V}^k_{\text{str}}(M)
\end{align}
denote the corresponding submanifold chart for $\text{Diff}^k(M)$. As the next step, we verify that $\text{Diff}^k(M)\subseteq C^k(M,\widetilde{M})$ is a submanifold modeled on $\mathcal{V}^k_{\text{str}}(M)$.
For $g\in \text{Diff}^k(M),$ the map
\[R_g\colon C^1(M,\widetilde{M})\to C^1(M,\widetilde{M}),\quad f\mapsto f\circ g\]
is a $C^{\infty}$-diffeomorphism which restricts to a $C^{\infty}$-diffeomorphism $R_g\colon C^k(M,\widetilde{M})\to C^k(M,\widetilde{M})$ such that $R_g(\text{Diff}^k(M))=\text{Diff}^k(M)$ and $R_g(\text{id}_M)=g$. Hence
\begin{align*}
(R_g\circ\Psi)(Q\cap\mathcal{V}^k_{\text{str}}(M))=R_g(\Psi(Q)\cap\text{Diff}^k(M))=(R_g\circ\Psi)(Q)\cap\text{Diff}^k(M),
\end{align*}
showing that 
\[\Phi_k\circ \big(R_g\vert_{\Psi(Q)\cap C^k(M,\widetilde{M})}\big)^{-1}\
\colon (R_g\circ\Psi)(Q)\cap C^k(M,\widetilde{M})\to Q\cap \mathcal{V}^k(M)\] 
is a chart for $C^k(M,\widetilde{M})$ around $g$ adapted to $\text{Diff}^k(M)$. This completes the proof of Proposition \ref{hh}.

We use the submanifold structure on $\text{Diff}^k(M)$ to prove Proposition \ref{gg} for all $k\in\N\cup\{\infty\}$. First, we check that (b) holds, which subsumes the exponential law in (a). 
Let $m\in\N$, $\alpha_1,\ldots,\alpha_m\in\N_0\cup\{\infty\}$ and $g\colon L_1\times\ldots\times L_m\to \text{Diff}^k(M)$ be a map on a product of $C^{\alpha_i}$-manifolds $L_i$ for $i\in\{1\ldots,m\}$, possibly with rough boundary. Since $\text{Diff}^k(M)\subseteq C^k(M,\widetilde{M})$ is a submanifold, $g$ is $C^{(\alpha_1,\ldots,\alpha_m)}$ if and only if $g\colon L_1\times\ldots\times L_m\to C^k(M,\widetilde{M})$ is $C^{(\alpha_1,\ldots,\alpha_m)}$ as a map to $C^k(M,\widetilde{M})$. According to \cite[Theorem 1.1]{GS}, the latter holds if and only if $g^{\wedge}\colon L_1\times\ldots\times L_m\times M\to \widetilde{M},\ g^{\wedge}(x_1,\ldots,x_m,y)\coloneqq g(x_1,\ldots,x_m)(y)$ is $C^{(\alpha_1,\ldots,\alpha_m,k)}$, which is equivalent to $g^{\wedge}\colon L_1\times\ldots\times L_m\times M\to M$ being $C^{(\alpha_1,\ldots,\alpha_m,k)}$.

To prove the uniqueness of the manifold structure, 
we write $\text{Diff}^k(M)'$ for $\text{Diff}^k(M)$ endowed with another smooth manifold structure for which the exponential law as in (a) holds. The identity map $\text{id}\colon \text{Diff}^k(M)\to \text{Diff}^k(M)$ being smooth, the exponential law entails that the evaluation map \[\varepsilon_k=\text{id}^{\wedge}\colon \text{Diff}^k(M)\times M\to M\]
is $C^{\infty,k}$. Since the map $g\coloneqq\text{id}\colon \text{Diff}^k(M)\to \text{Diff}^k(M)'$  satisfies $g^{\wedge}=\varepsilon_k$, the exponential law shows that $g$ is smooth. Reversing the roles of $\text{Diff}^k(M)$ and $\text{Diff}^k(M)'$, it follows that also $g^{-1}$ is smooth. Thus $g$ is a $C^{\infty}$-diffeomorphism and the manifold structures $\text{Diff}^k(M)'$ and $\text{Diff}^k(M)$ coincide.

Next we show that (c) holds for all $l\in\N_0\cup\{\infty\}$.
By Lemma \ref{bb}, the map
\[c_{k,l}^{\wedge}=\varepsilon_{k+l}\circ(\text{id}\times\varepsilon_k)\colon\text{Diff}^{k+l}(M)\times\text{Diff}^k(M)\times M\to M,\quad (f,g,x)\mapsto f(g(x))\] 
is $C^{\infty,l,k}$. Hence (b) entails that the composition map
\[c_{k,l}\colon\text{Diff}^{k+l}(M)\times\text{Diff}^k(M)\to\text{Diff}^k(M),\quad (f,g)\mapsto f\circ g\]
is $C^{\infty,l}$. 
Now let us verify that the inversion map
\[\iota_{k,l}\colon\text{Diff}^{k+l}(M)\to\text{Diff}^k(M),\quad f\mapsto f^{-1}\]
is $C^l$. 
Using $F^j_{k,l}\colon Q\cap \mathcal{V}^{k+l}_{\text{str}}(M)\to\lfloor L_j, A_j\rfloor\cap\text{Diff}^k(P_j)$ defined as in \eqref{gl.w} for $j\in J$ and the chart $\Theta_{k+l}$ for $\text{Diff}^{k+l}(M)$ as in \eqref{gl.y}, we get a $C^l$-map
\[F^j_{k,l}\circ\Theta_{k+l}\colon \text{Diff}^{k+l}(M)\cap \Psi(Q)\to \lfloor L_j, A_j\rfloor\cap\text{Diff}^k(P_j).\]
The equation \eqref{gl.x} entails that
\[f^{-1}\vert_{(K_j)^{\circ}}=\varphi_j^{-1}\circ F^j_{k,l}(\Theta_{k+l}(f))\circ\varphi_j\vert_{(K_j)^{\circ}}\]
for all $f\in \text{Diff}^{k+l}(M)\cap \Psi(Q)$. The evaluation map
$\varepsilon^{j}_k\colon\text{Diff}^k(P_j)\times P_j\to P_j$
being $C^{\infty,k}$, it follows that
\begin{align*}
\varphi_j^{-1}\circ\varepsilon^{j}_k\circ((F^j_{k,l}\circ\Theta_{k+l})\times\varphi_j\vert_{(K_j)^{\circ}})\colon(\text{Diff}^{k+l}(M)\cap\Psi(Q))\times (K_j)^{\circ}&\to M,\\
(f,x)&\mapsto f^{-1}(x)
\end{align*}
is $C^{l,k}$. 
Since $M$ is covered by the subsets $(K_j)^{\circ}$ for $j\in J$, we deduce that
\[(\text{Diff}^{k+l}(M)\cap\Psi(Q))\times M\to M,\quad (f,x)\mapsto f^{-1}(x)\]
is $C^{l,k}$, whence the exponential law entails that
\[\iota_{k,l}\vert_{\Psi(Q)}\colon \text{Diff}^{k+l}(M)\cap\Psi(Q)\to \text{Diff}^k(M),\quad f\mapsto f^{-1}\]
is $C^l$. 
Given $g\in \text{Diff}^{k+l}(M)$, we compose the $C^l$-map
\[L_{g^{-1}}\colon \text{Diff}^k(M)\to \text{Diff}^k(M),\quad f\mapsto g^{-1}\circ f\]
with $\iota_{k,l}\vert_{\Psi(Q)}$ and the $C^{\infty}$-map
\[R_{g^{-1}}\colon\text{Diff}^{k+l}(M)\cap R_g(\Psi(Q))\to \text{Diff}^{k+l}(M)\cap\Psi(Q),\quad f\mapsto f\circ g^{-1},\]
which yields the $C^l$-map
\[
L_{g^{-1}}\circ\iota_{k,l}\vert_{\Psi(Q)}\circ R_{g^{-1}}\colon \text{Diff}^{k+l}(M)\cap R_g(\Psi(Q))\to \text{Diff}^k(M),\quad f\mapsto f^{-1},\]
where $\text{Diff}^{k+l}(M)\cap R_{g}(\Psi(Q))$ is an open $g$-neighbourhood in $\text{Diff}^{k+l}(M)$. As $g$ was arbitrary, it follows that $\iota_{k,l}\colon\text{Diff}^{k+l}(M)\to\text{Diff}^k(M)$ is $C^l$, hence (c) holds.

Choosing $l=0$ in (c), we see that the group multiplication $c_{k,0}\colon\text{Diff}^k(M)\times\text{Diff}^k(M)\to \text{Diff}^k(M)$ and the group inversion $\iota_{k,0}\colon\text{Diff}^k(M)\to \text{Diff}^k(M)$ are continuous, thus $\text{Diff}^k(M)$ is a topological group. Taking $k=l=\infty$, we obtain that $c_{\infty,\infty}\colon \text{Diff}^{\infty}(M)\times\text{Diff}^{\infty}(M)\to \text{Diff}^{\infty}(M)$ and $\iota_{\infty,\infty}\colon\text{Diff}^{\infty}(M)\to \text{Diff}^{\infty}(M)$ are smooth, entailing that $\text{Diff}^{\infty}(M)$ is a Lie group.

The results obtained here imply that $\text{Diff}^{\infty}(M)$
is $L^1$-regular in the sense of \cite{G15}. In fact,
a forthcoming version of \cite{GS24}
shows that a Fr\'{e}chet-Lie group $G$ is $L^1$-regular
whenever $G=\bigcap_{k\in {\mathbb N}}G_k$ with smooth Banach manifolds $G_1\supseteq G_2\supseteq \ldots$ which are groups, such that the following conditions are satisfied:
\begin{itemize}
\item[(a)] For each $k\in {\mathbb N}$, the inclusion map $G_{k+1}\to G_k$ is a smooth group homomorphism;
\item[(b)] The group multiplication $G_{k+l}\times G_k\to G_k$ is $C^l$ for
all $k\in {\mathbb N}$ and $l\in {\mathbb N}_0$;
\item[(c)] The modelling spaces $E_k$ of $G_k$
form a descending sequence $E_1\supseteq E_2\supseteq\ldots$
with continuous linear inclusion maps $E_{k+1}\to E_k$;
\item[(d)] The modelling space of $G$ is $F=\bigcap_{k\in {\mathbb N}}E_k$ as a vector space and $F$ has the initial topology with respect to
the inclusion maps $F\to E_k$, making it the projective limit of the spaces $E_k$;
\item[(e)] There exist charts $\phi_k\colon U_k\to V_k$
from an open $e$-neighbourhood $U_k\subseteq G_k$ onto an open
$0$-neighbourhood $V_k\subseteq E_k$ such that
$U_k=U_1\cap G_k$, $V_k=\phi_1(U_k)$ and $\phi_k=\phi_1|_{U_k}^{V_k}$
for each $k\in {\mathbb N}$; and
\item[(f)] $U:=\bigcap_{k\in {\mathbb N}}U_k$ is open in $G$,
$V:=\bigcap_{k\in \N}V_k$ is open in $F$ and $\phi_1\vert_U^V$ is a chart of $G$.
\end{itemize}
These conditions are satisfied by $G:=\text{Diff}^{\infty}(M)$
if we take $G_k:=\text{Diff}^k(M)$ and $\phi_k\coloneqq\Theta_k$ as defined in \eqref{gl.y}. 
\end{proof}



\newpage

\addcontentsline{toc}{section}{\protect\numberline{}Literatur}

Johanna Jakob, Universität Paderborn, Institut für Mathematik, Warburger Str. 100, 33098 Paderborn, Germany


\begin{thebibliography}{99}

\bibitem{Alzaareer} Alzaareer, H., \emph{Differential calculus on multiple products}, Indag. Math. \textbf{30} (2019), 1036–1060.

\bibitem{AS} Alzaareer H. and A. Schmeding, \emph{Differentiable mappings on products with different degrees of differentiability in the two factors}, Expo. Math. \textbf{33} (2015), 184–222. 

\bibitem{Amiri} Amiri, H., H. Glöckner and A. Schmeding, \emph{Lie groupoids of mappings taking values in a Lie groupoid}, Arch. Math. (Brno) \textbf{56} (2020), 307–356.

\bibitem{Bastiani} Bastiani, A., Applications diﬀ\'erentiables et vari\'et\'es diﬀ\'erentiables de dimension inﬁnie, J. Anal. Math. \textbf{13} (1964), 1–114.

\bibitem{Brondsted} Brøndsted, A., \emph{An Introduction to Convex Polytopes}, Springer, New York, 1983.

\bibitem{Eells} Eells, J. Jr., \emph{A setting for global analysis}, Bull. Amer. Math. Soc. \textbf{72} (1966), 751–807.

\bibitem{G} Glöckner, H., \emph{Diffeomorphism groups of convex polytopes}, J. Convex Anal. \textbf{30} (2023), 343–358.

\bibitem{G15} Glöckner, H., \emph{Measurable regularity properties of infinite-dimensional Lie groups}, preprint, 2015, arxiv:1601.02568.

\bibitem{GN} Glöckner, H. and K.-H. Neeb, “Inﬁnite Dimensional Lie Groups”, book in preparation, 2025.

\bibitem{GS} Glöckner, H. and A. Schmeding, \emph{Manifolds of mappings on cartesian pro\-ducts}, preprint, 2021, arXiv:2109.01804.

\bibitem{GS24} Glöckner, H. and A. Suri, \emph{$L^1$-regularity of strong ILB-groups}, preprint, 2024, arXiv:2410.02909.

\bibitem{Grong} Grong, E. and A. Schmeding, \emph{Controllability and diffeomorphism groups on manifolds with boundary}, Forum Math. \textbf{37} (2025), 1291–1308.

\bibitem{HB} Hermas, N. and N. Bedida, \emph{On a class of regular Fr\'echet-Lie groups}, Bull. Iran. Math. Soc. \textbf{47} (2021), 627–647.

\bibitem{J} Jakob,  J., \emph{Der Whitneysche Fortsetzungssatz für vektorwertige Funktionen}, preprint, 2023, arXiv:2307.03473.

\bibitem{Kriegl} Kriegl, A. and P. W.  Michor, “The Convenient Setting of Global Analysis”, AMS, Providence, 1997a.

\bibitem{Michor} Michor, P. W., “Manifolds of Differentiable Mappings,” Shiva Publ., Nantwich, 1980.

\bibitem{Milnor} Milnor, J., \emph{Remarks on inﬁnite-dimensional Lie groups}, pp. 1007–1057 in: B. S. DeWitt and R. Stora, “Relativit\'e, groupes et topologie II”, North Holland, Amsterdam, 1984.


\bibitem{Schmeding} Schmeding, A., “An Introduction to Infinite-Dimensional Differential Geometry,” Cambridge University Press, Cambridge, 2023.

\end{thebibliography}
\end{document}